\documentclass[12pt]{amsart}

\usepackage[utf8]{inputenc}
\usepackage{array}
\usepackage{enumitem}
\usepackage[T1]{fontenc}
\usepackage{tgpagella}
\usepackage{newpxmath}

\usepackage[nocompress]{cite}
\usepackage{hyperref}

\newcommand{\bbZ}{\mathbb{Z}}
\newcommand{\bbC}{\mathbb{C}}
\newcommand{\bbQ}{\mathbb{Q}}
\newcommand{\bbR}{\mathbb{R}}
\newcommand{\Rninf}{\mathbb{R} \cup \{-\infty\}}
\newcommand{\bbD}{\mathbb{D}}

\newcommand{\bfze}{\mathbf{0}}

\NewDocumentCommand{\cI}{o}{%
    \IfNoValueTF{#1}{\mathcal{I}}{\mathcal{I}\big( #1 \big)}%
}
\NewDocumentCommand{\cJ}{o}{%
    \IfNoValueTF{#1}{\mathcal{J}}{\mathcal{J}\big( #1 \big)}%
}

\newcommand{\cO}{\mathcal{O}}
\newcommand{\cF}{\mathcal{F}}

\newcommand{\inp}[2]{\langle #1 , #2 \rangle}

\newcommand{\mfa}{\mathfrak{a}}

\newcommand{\reg}{\mathrm{reg}}
\newcommand{\loc}{\mathrm{loc}}

\DeclareMathOperator{\interior}{int}
\DeclareMathOperator{\Supp}{Supp}
\DeclareMathOperator{\divisor}{div}
\DeclareMathOperator{\dom}{dom}
\DeclareMathOperator{\sing}{sing}
\DeclareMathOperator{\ord}{ord}
\DeclareMathOperator{\Hom}{Hom}
\DeclareMathOperator{\Log}{Log}
\DeclareMathOperator{\rec}{rec}
\DeclareMathOperator{\orb}{orb}

\DeclareMathOperator{\conv}{Conv}
\DeclareMathOperator{\cone}{Cone}

\newcommand{\cM}{\mathcal{M}}
\DeclareMathOperator{\Exc}{Exc}

\theoremstyle{plain}

\newtheorem{theorem}{Theorem}[section]
\newtheorem{lemma}[theorem]{Lemma}
\newtheorem{corollary}[theorem]{Corollary}
\newtheorem{proposition}[theorem]{Proposition}
\newtheorem{definition}[theorem]{Definition}
\newtheorem{example}[theorem]{\textnormal{\textbf{Example}}}

\theoremstyle{remark}
\newtheorem{remark}[theorem]{Remark}

\title[Analytic multiplier ideals on a toric complex space]{Analytic multiplier ideals of toric plurisubharmonic functions on a toric complex space}

\author[Hoseob Seo]{Hoseob Seo}
\address{Research Institute of Mathematics, Seoul National University, 1 Gwanak-ro, Gwanak-gu, Seoul, 08826, Republic of Korea }
\email{hoseobseo@snu.ac.kr; hsseo.math@gmail.com}

\subjclass[2020]{Primary 14F18, 32U05; Secondary 14M25}
\keywords{Analytic multiplier ideals, toric plurisubharmonic functions, toric varieties, Newton convex bodies, monomial ideals}

\begin{document}

\begin{abstract}
In this paper, we study analytic multiplier ideal sheaves of toric plurisubharmonic functions on normal affine toric complex analytic spaces equipped with torus-invariant rational divisors, assuming that a positive integer multiple of the log canonical divisor is Cartier. We describe these sheaves in terms of Newton convex bodies and show that they are globally generated by finitely many meromorphic monomials. Our results generalize the description of An and the formulas of Howald and Blickle for monomial ideals, and recover the descriptions of Rashkovskii and Guenancia on affine space. We also extend this description to weights obtained by subtracting positive rational multiples of monomial ideal weights.
\end{abstract}

\maketitle

\section{Introduction}

Multiplier ideal sheaves play important roles in complex and algebraic geometry, with applications to vanishing theorems and the $L^2$ extension of holomorphic sections (see, e.g., \cite{demailly2012complex,lazarsfeld2004positivity2}). They are used to study singularities of divisors and ideal sheaves, as well as those of plurisubharmonic (psh, for short) functions. In particular, log canonical thresholds and jumping numbers give numerical information on these singularities through the changes of multiplier ideals.
Variants of multiplier ideals and related ideal sheaves have been developed to address different questions about singularities, adjunction, and extension, and have been intensively studied in recent years (see, e.g., \cite{defernex2009singularities,fujino2010nonlc,fujino2011supplements,li2021briancon,chan2023adjoint,kimseo2023extension,li2024nadel}).

Explicit computations of multiplier ideals are important for turning vanishing and extension theorems into concrete geometric statements. For example, on a smooth projective variety, under suitable positivity hypotheses, Nadel vanishing implies that global sections of an adjoint line bundle restrict surjectively to the subscheme defined by a multiplier ideal. An explicit description of this ideal identifies the subscheme and the conditions imposed on sections, and can therefore be used to construct sections with prescribed values or jets. Computing or controlling multiplier ideals is thus a key ingredient in arguments for global generation and separation of points.

In the toric setting, these ideals admit explicit descriptions in terms of convex sets. For monomial ideals on $\bbC^n$, Howald \cite{howald2001multiplier} gave a description using Newton polyhedra.
In \cite{blickle2004multiplier}, Blickle extended this result to normal affine toric pairs. Let $X_\sigma$ be the normal affine toric variety associated to a strongly convex rational polyhedral cone $\sigma$, with character lattice $M$. Let $D=\sum_\rho a_\rho D_\rho$ be a torus-invariant Weil $\bbQ$-divisor such that $K_{X_\sigma}+D$ is $\bbQ$-Cartier. Choose $u_D\in M_\bbQ$ such that $\inp{u_D}{v_\rho}=1-a_\rho$
for every primitive ray generator $v_\rho$ of $\sigma$.

\begin{theorem}[Blickle {\cite[Theorem~1]{blickle2004multiplier}}]
    Let $\mfa\subset\cO_{X_\sigma}$ be a monomial ideal sheaf. Then
    $$
        \cJ[(X_\sigma,D);\mfa^c] =
        \bigl\langle \chi^m \,\bigm|\, m+u_D\in\interior\bigl(c\,\operatorname{Newt}(\mfa)\bigr) \bigr\rangle,
    $$
    for all $c>0$, where $\operatorname{Newt}(\mfa)$ is the Newton polyhedron of $\mfa$.
\end{theorem}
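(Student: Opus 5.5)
The plan is to reduce to a toric log resolution and then use Howald-type combinatorics. First, I would choose a fan $\Sigma$ refining $\sigma$ with two properties: $\Sigma$ is smooth, and it refines the normal fan of $\operatorname{Newt}(\mfa)$, meaning that on every cone of $\Sigma$ the function $v\mapsto \min_{m\in\operatorname{Newt}(\mfa)}\inp{m}{v}$ is linear. Let $\mu\colon X_\Sigma\to X_\sigma$ be the corresponding proper birational toric morphism. Then $\mfa\cdot\cO_{X_\Sigma}=\cO(-F)$ is invertible, where
\[
F=\sum_{\rho\in\Sigma(1)}\ord_\rho(\mfa)\,D_\rho,\qquad \ord_\rho(\mfa)=\min_{m\in\operatorname{Newt}(\mfa)}\inp{m}{v_\rho}.
\]
Moreover, $\Exc(\mu)\cup\Supp F\cup\mu^{-1}_*D$ is contained in the toric boundary, which is simple normal crossing because $\Sigma$ is smooth. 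So $\mu$ is a log resolution of $(X_\sigma,D;\mfa)$. By the definition of the multiplier ideal of a pair (Blickle/de Fernex--Hacon style),
\[
\cJ[(X_\sigma,D);\mfa^c]=\mu_*\cO_{X_\Sigma}\bigl(\lceil K_{X_\Sigma}-\mu^*(K_{X_\sigma}+D)-cF\rceil\bigr).
\]

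Second, I would compute the discrepancy divisor. Since $K_{X_\sigma}+D=-\sum_\rho(1-a_\rho)D_\rho$ and $\inp{u_D}{v_\rho}=1-a_\rho$, the $\bbQ$-Cartier divisor $K_{X_\sigma}+D$ is locally given by the character $\chi^{u_D}$. Its pullback is therefore $-\sum_{\rho\in\Sigma(1)}\inp{u_D}{v_\rho}D_\rho$. Using $K_{X_\Sigma}=-\sum D_\rho$, the divisor inside the round-up becomes
\[
\sum_{\rho\in\Sigma(1)}\bigl(\inp{u_D}{v_\rho}-1-c\,\ord_\rho(\mfa)\bigr)D_\rho .
\]

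Third, I would use the fact that the pushforward of a torus-invariant divisorial sheaf $\cO(\sum b_\rho D_\rho)$ to the affine $X_\sigma$ is generated by the monomials $\chi^m$ with $\inp{m}{v_\rho}\ge -b_\rho$ for all $\rho\in\Sigma(1)$. With $b_\rho=\lceil\inp{u_D}{v_\rho}-1-c\ord_\rho(\mfa)\rceil$ this condition is
\[
\inp{m}{v_\rho}\ge \lfloor 1+c\ord_\rho(\mfa)-\inp{u_D}{v_\rho}\rfloor.
\]
Since $\inp{m}{v_\rho}$ is an integer, this is equivalent to the strict inequality
\[
\inp{m+u_D}{v_\rho}>c\,\ord_\rho(\mfa).
\]

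The main obstacle is the last step: showing that the finite family of strict inequalities over $\rho\in\Sigma(1)$ cuts out exactly $\interior(c\operatorname{Newt}(\mfa))$ on $M_\bbQ$. The key observation is that the facets of $c\operatorname{Newt}(\mfa)$ have inner normals among the rays of $\Sigma$, because $\Sigma$ refines the normal fan and includes all rays of $\sigma$, which is the recession cone. The other rays give inequalities $\inp{x}{v_\rho}>c\ord_\rho$ where $c\ord_\rho(\mfa)=\min_{c\operatorname{Newt}(\mfa)}\inp{\cdot}{v_\rho}$. These are automatically implied by the facet inequalities for interior points: $v_\rho$ is a nonnegative combination of facet normals in its cone, and the support function is linear there. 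Hence the conditions are exactly $m+u_D\in\interior(c\operatorname{Newt}(\mfa))$. Monomial generation then gives the stated equality for all $c>0$.
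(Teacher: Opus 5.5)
Your proof is correct, but it takes a different route from the paper. It is the standard toric-resolution argument in the spirit of Howald and Blickle.

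The paper does not prove this statement directly. It recovers it by combining three ingredients:
\begin{enumerate}
\item Theorem~\ref{thm:analytic_mis_extend_algebraic_mis}, which identifies $\cJ[(X_\sigma,D);\mfa^c]^{\mathrm{an}}$ with $\cI[(X_\sigma,D);c\varphi_\mfa]$ using an arbitrary log resolution;
\item Theorem~\ref{thm:toric_multiplier_ideal}, applied to the toric psh function $c\varphi_\mfa=\frac{c}{2}\log\sum_j|\chi^{a_j}|^2$;
\item the identity $\interior P(c\varphi_\mfa)=\interior(c\operatorname{Newt}(\mfa))$, which follows from Proposition~\ref{prop:equivalent_def_of_newton_convex_body_and_other_characterization} because the recession function of the associated convex function is $x\mapsto c\max_j\inp{a_j}{x}$.
\end{enumerate}
In that route, monomial generation comes from $K_N$-invariance and Fourier averaging (Theorem~\ref{thm:compact_torus_invariant_imply_torus_invariant} and Corollary~\ref{cor:global_meromorphic_monomial_generation}). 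The membership criterion comes from computing $\int|\chi^m|^2e^{-2\varphi}\,dV_\beta$ in logarithmic coordinates and applying the recession-function integrability criterion (Theorem~\ref{thm:negative_recession_iff_integrability}). No resolution adapted to $\mfa$ is needed.

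You instead principalize $\mfa$ on a smooth refinement of the normal fan. You then reduce everything to the monomial description of $\mu_*\cO_{X_\Sigma}(\sum_\rho b_\rho D_\rho)$ and the facet description of the interior of a full-dimensional polyhedron. For monomial ideals this is shorter, purely algebraic, and self-contained. The paper's analytic route is heavier, but it never needs to principalize the weight. It therefore covers toric psh functions without analytic singularities (as in Example~\ref{ex:quadric_cone_with_boundary}) and the weights $\varphi-c\varphi_\mfa$, with Blickle's formula as a special case.

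Two minor points on your write-up:
\begin{enumerate}
\item The recession cone of $\operatorname{Newt}(\mfa)$ is $\sigma^\vee$, not $\sigma$; $\sigma$ is the support of its normal fan.
\item A smooth toric refinement can have non-divisorial exceptional locus (e.g.\ a small resolution). This is harmless here, because the usual resolution-independence argument only uses that the supports of the discrepancy divisor and of $F$ are simple normal crossing.
\end{enumerate}
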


When $X_\sigma=\bbC^n$ and $D=0$, this is Howald's formula. For psh functions, Newton polyhedra are replaced by Newton convex bodies. A psh function $\varphi$ on a polydisk centered at the origin in $\bbC^n$ is said to be toric if $\varphi(z_1,\ldots,z_n)$ depends only on $|z_1|,\ldots,|z_n|$.

\begin{theorem}[Rashkovskii {\cite[Proposition~3.1]{rashkovskii2013multi}}, Guenancia {\cite[Theorem~1.13]{guenancia2012toric}}]
    Let $\varphi$ be a toric psh function on a polydisk centered at the origin in $\bbC^n$. Then the stalk $\cI (\varphi)_0$ is a monomial ideal. A monomial $z^\alpha$ with $\alpha \in \bbZ_{\ge 0}^n$ belongs to $\cI(\varphi)_0$ if and only if $\alpha + \mathbf{1} \in \interior P(\varphi)$, where $\mathbf{1} = (1,\ldots,1)$ and $P(\varphi)$ is the Newton convex body of $\varphi$.
\end{theorem}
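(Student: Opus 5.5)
The plan is to prove the two assertions separately: first that $\cI(\varphi)_0$ is monomial, then the integrability criterion for each monomial. Both rest on the torus symmetry of $\varphi$ and on passing to logarithmic coordinates $t_j=\log|z_j|$.

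\textbf{Monomiality.} Since $\varphi$ depends only on $|z_1|,\ldots,|z_n|$, it is invariant under the compact torus $(S^1)^n$. Take a germ $f=\sum_\alpha c_\alpha z^\alpha \in \cI(\varphi)_0$, integrable against $e^{-2\varphi}$ on a small polydisk $\Delta$. Integrating in polar coordinates, the angular integrals kill the cross terms by orthogonality of characters on $(S^1)^n$. This gives
$$
    \int_\Delta |f|^2 e^{-2\varphi}\,dV=\sum_\alpha |c_\alpha|^2\int_\Delta |z^\alpha|^2 e^{-2\varphi}\,dV .
$$
Hence every monomial $z^\alpha$ with $c_\alpha\neq 0$ lies in $\cI(\varphi)_0$, so the ideal is monomial. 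Justifying the interchange of sum and integral is routine: use absolute convergence on a slightly smaller polydisk, or Parseval on each torus fiber.

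\textbf{The criterion.} Write $\varphi(z)=u(\log|z_1|,\ldots,\log|z_n|)$, where $u$ is convex and increasing in each variable on $\{t_j<\log r_j\}$. The Newton convex body $P(\varphi)$ is the closure of $\bigcup_{\text{cone}}\ldots$, or more concretely it is described via the Legendre transform of $u$. The indicator $\psi_\varphi(t)=\lim_{s\to\infty}u(st)/s$ on $t\in\bbR_{\le0}^n$ is convex and positively homogeneous, and
$$
    P(\varphi)=\{a\in\bbR_{\ge0}^n : \inp{a}{t}\le \psi_\varphi(t)\ \forall t\le 0\}+\bbR_{\ge0}^n
$$
(up to the sign conventions of the paper). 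Changing variables $z_j=e^{t_j+i\theta_j}$ turns the integrability of $z^\alpha$ into
$$
    \int_{t\le c}\exp\bigl(2\inp{\alpha+\mathbf 1}{t}-2u(t)\bigr)\,dt<\infty .
$$

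For sufficiency, let $a=\alpha+\mathbf 1\in\interior P(\varphi)$. Then $\inp{a}{t}\ge\psi(t)+\varepsilon|t|$ on the negative orthant, and convexity gives $u(t)\le\psi(t)+C$. So the integrand is at most $e^{-2\varepsilon|t|}$, which is integrable.

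For necessity, suppose $a\notin\interior P(\varphi)$. Then there is a direction $t_0\le0$, $t_0\neq 0$, with $\inp{a}{t_0}\le\psi(t_0)$, and one may take $t_0$ rational by a perturbation argument. Along the cone around the ray $st_0$ we need a lower bound on the integrand. Convexity of $u$ gives $u(st_0+w)\ge u(st_0)+O(|w|)$ only on one side, so instead I would compare $\varphi$ with the monomial weight $\psi$. Since $u(st)\le s\psi(t)+u(0)$ fails in the wrong direction, the standard route is Guenancia's: use that $u\ge\psi_\varphi-C$ is false in general, but $u(t)\ge \psi(t)+o(|t|)$ is not enough either. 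So I would reduce to Howald's theorem instead. Approximate $\varphi$ from below by the toric weights $\frac1k\log\sum_{m\in kP}|z^m|$, whose multiplier ideals are given by Howald's formula via the Newton polyhedra of the monomial ideals $\mfa_k$ generated by $z^m$ with $m\in kP(\varphi)$, and whose union is $P(\varphi)$ with $\varphi\le \frac1k\log|\mfa_k|+C$. Then apply Demailly--Ein--Lazarsfeld type subadditivity or openness: $\cI(\varphi)=\bigcup_k\cI(\tfrac1k\log|\mfa_k|)$ gives $\subseteq$ from monotonicity, and strong openness gives the equality in the limit.

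\textbf{Main obstacle.} The hardest step is necessity, namely bounding $u$ from above by its asymptotic indicator plus a constant along a cone. I expect that step, or the approximation argument replacing it, to require the most care, particularly at boundary points of $P(\varphi)$ with irrational supporting directions.
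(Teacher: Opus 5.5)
Your monomiality argument (Parseval on each torus fibre $\{|z_j|=r_j\}$, then Tonelli in the radii) is fine. On a polydisk it is more elementary than the paper's route through $K_N$- versus $T_N$-invariance and Corollary~\ref{cor:global_meromorphic_monomial_generation}.

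The gap is in the criterion, and it comes from swapping the roles of the two asymptotic inequalities. For necessity you dismiss $u(st_0)\le u(0)+s\psi(t_0)$, but it is true. For convex $u$ the quotient $s\mapsto (u(st_0)-u(0))/s$ is nondecreasing with limit $\psi(t_0)$; on a polydisk, use a base point $t^*$ of the domain instead of $0$. This inequality is exactly what necessity needs. If $\inp{a}{t_0}\ge\psi(t_0)$ for some nonzero $t_0\le 0$, then $h:=\inp{a}{\cdot}-u$ satisfies $h(st_0)\ge -u(0)$ along the ray. Concavity, $h(\lambda t_0+\tfrac12x)\ge\tfrac12h(2\lambda t_0)+\tfrac12h(x)$, spreads this bound to a half-tube $\{\lambda t_0+\tfrac12x:\lambda\ge1,\ x\in K\}$ of infinite volume. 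This is the proof of Theorem~\ref{thm:negative_recession_iff_integrability}, and no rationality of $t_0$ is needed.

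The substitute you propose fails. By the definition of $P(\varphi)$, every $m\in kP(\varphi)$ satisfies $\inp{m/k}{t}\le u(t)+C_m$. So the weights $\frac1k\log|\mfa_k|$ lie \emph{below} $\varphi$ up to constants. Combined with Howald's formula, this gives only $\cI(\varphi)\supseteq\bigcup_k\cI(\frac1k\log|\mfa_k|)$, which is sufficiency again. The inequality $\varphi\le\frac1k\log|\mfa_k|+C$ that you invoke for $\subseteq$ is false whenever $P(\varphi)$ has an irrational vertex. Take $\varphi=\max\{\log|z_1|,\sqrt2\log|z_2|\}$. The minimum of $a_1+\tfrac12a_2$ on $P(\varphi)$ is attained only at $(0,\sqrt2)$. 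Hence along $t=s(-1,-\tfrac12)$ the difference $u(t)-\max_{m\in kP(\varphi)\cap\bbZ^2}\inp{m/k}{t}$ equals $c_k s$ with $c_k>0$, and it is unbounded for every $k$. Strong openness only replaces $\varphi$ by $(1+\epsilon)\varphi$, so it does not supply the missing upper bound.

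For sufficiency, the inequality you wrote is reversed: $a\in\interior P(\varphi)$ means $\inp{a}{t}\le\psi(t)-\varepsilon|t|$ for $t\le0$. To bound $e^{2\inp{a}{t}-2u(t)}$ from above you then need a \emph{lower} bound on $u$, so $u\le\psi+C$ is useless here. The constant lower bound $u\ge\psi-C$ is false in general: for $\varphi=-\sqrt{-\log|z_1|}$ one has $\psi\equiv0$ while $u\to-\infty$. What is true, and sufficient, is the uniform estimate $u(t)-\inp{a}{t}\ge\gamma|t|-C$ on $\{t\le0\}$. The paper obtains it from level-coercivity of $u-\inp{a}{\cdot}+\delta_{\{t\le0\}}$, whose recession function is positive off the origin. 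Alternatively, combine the monotone convergence of $(u(s\theta)-u(0))/s$ to $\psi(\theta)$, continuity in $\theta$, and compactness of $\{\theta\le0:|\theta|=1\}$. With these two corrections, your direct logarithmic-coordinate approach becomes the paper's argument: Theorem~\ref{thm:toric_multiplier_ideal} with $\sigma=\bbR_{\ge0}^n$, $D=0$, $u_D=\mathbf{1}$.
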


In \cite{an2021Thesis}, An obtained an analogous characterization on normal $\bbQ$-Gorenstein toric varieties. Our main interest is to extend these characterizations to toric psh functions on pairs $(X_\sigma,D)$ as above, with $X_\sigma$ viewed as a complex analytic space.
We denote by $\cI[(X_\sigma,D);\varphi]$ the multiplier ideal sheaf defined through adapted volume forms, and by $P(\varphi)$ the Newton convex body defined in Section~\ref{sec:analysis_on_toric_varieties}.

\begin{theorem}[Theorem~\ref{thm:toric_multiplier_ideal}]
    Let $\varphi$ be a toric psh function on $X_\sigma$ in the sense of Definition~\ref{def:toric_psh}.
    Then $\cI[(X_\sigma,D);\varphi]$ is a coherent $T_N$-invariant $\cO_{X_\sigma}$-submodule of $\cM_{X_\sigma}$ globally generated by finitely many meromorphic monomials.
    For every $m\in M$, the monomial $\chi^m$ is a global section of this sheaf if and only if $m+u_D\in\interior P(\varphi)$.
\end{theorem}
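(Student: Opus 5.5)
The argument has three parts: reduce the multiplier ideal to an integrability condition on the open torus, show the sheaf is spanned by monomials, and then test each monomial by a convex-analysis integral.

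\textbf{Step 1: reduction to an integral over the torus.} Fix $r$ with $r(K_{X_\sigma}+D)$ Cartier. Near the torus-fixed point, the sheaf $\cO(-r(K_{X_\sigma}+D))$ is generated by a monomial $r$-form $\chi^{w}(dz/z)^{\otimes r}$ on $T_N$. Here $dz/z$ is the invariant $n$-form and $w\in M$ satisfies $\inp{w}{v_\rho}=r(1-a_\rho)$, so $w=r u_D$.

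The adapted volume form is then, up to a smooth positive factor,
$$
|\chi^{u_D}|^{2}\,\Bigl|\frac{dz}{z}\Bigr|^{2}.
$$
This is a Haar measure on $T_N$ twisted by $|\chi^{u_D}|^2$.

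Hence a meromorphic germ $f$ lies in $\cI[(X_\sigma,D);\varphi]$ at a point exactly when
$$
\int |f|^2 |\chi^{u_D}|^2 e^{-2\varphi}\, d\mu_{\mathrm{Haar}} < \infty
$$
on $T_N\cap U$, for some neighbourhood $U$. The boundary $X_\sigma\setminus T_N$ has measure zero, so it can be ignored.

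\textbf{Step 2: the sheaf is spanned by monomials.} $\varphi$ is $T_N^{\mathrm{c}}$-invariant (compact torus), and the domain is a $T_N^{\mathrm c}$-invariant neighbourhood of the fixed point. Expand $f=\sum_m c_m\chi^m$ in Laurent series. Averaging over the compact torus gives orthogonality of characters in $L^2(e^{-2\varphi}|\chi^{u_D}|^2)$. Therefore $f$ is in the sheaf if and only if every $\chi^m$ with $c_m\neq0$ is.

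For finite generation, the set $\{m\in M: m+u_D\in \interior P(\varphi)\}$ is stable under adding $\sigma^\vee\cap M$. This uses the fact that $P(\varphi)$ is stable under adding $\sigma^\vee$, from the definition in Section~\ref{sec:analysis_on_toric_varieties}. Gordan's lemma then shows it is a finitely generated module over $\bbC[\sigma^\vee\cap M]$. Coherence and $T_N$-invariance follow, together with the standard fact that analytic multiplier ideals are coherent. The stalks at points away from the fixed point are handled on the corresponding smaller affine toric charts.

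\textbf{Step 3: the monomial criterion (main obstacle).} Write $|\chi^m|=e^{-\inp{m}{x}}$ with $x=-\log|z|\in N_\bbR$. Neighbourhoods of the fixed point correspond to $x\in \sigma + x_0$ for large $x_0$. Define the convex function $g(x)=\varphi(e^{-x})$. The integral becomes
$$
\int_{x_0+\sigma} e^{-2\inp{m+u_D}{x}-2g(x)}\,dx.
$$

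Since $g$ is convex and decreasing along $\sigma$, its asymptotic slopes form the recession function $-h_{P(\varphi)}$. In other words, $\lim_{t\to\infty} g(tx)/t=-\inf_{p\in P(\varphi)}\inp{p}{x}$ on $\sigma$.

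For sufficiency: if $m+u_D\in\interior P(\varphi)$, pick $p\in P(\varphi)$ with $m+u_D-p$ in $\interior\sigma^\vee$ up to a small ball. Then $\inp{m+u_D}{x}+g(x)\ge \epsilon|x|-C$ on $x_0+\sigma$, and the integral converges.

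For necessity: if $m+u_D\notin\interior P(\varphi)$, separate $m+u_D$ from $P(\varphi)$ by some $x^*\in\sigma$. Then $\inp{m+u_D}{x}+g(x)\le C$ along a cone around the ray $\mathbb{R}_+x^*$, using convexity of $g$ to control the deviation on a slab of fixed width. This forces divergence.

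I expect the hard part to be this last estimate, particularly the case where $x^*$ lies on the boundary of $\sigma$. There I would first apply Rashkovskii's technique of approximating $g$ from below by its supporting affine function in direction $x^*$. Then I would compare with the polyhedral case, using Blickle's formula on a toric resolution to reduce to simplicial cones.
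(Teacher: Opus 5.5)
Your sufficiency argument in Step~3 is correct, and more elementary than the paper's appeal to level-coercivity. The necessity half, however, has a genuine gap exactly where you flag it, and the repair you suggest goes the wrong way.

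\textbf{Necessity in the boundary case.} An affine minorant $\ell\le g$ only gives $e^{-2g}\le e^{-2\ell}$. That is an \emph{upper} bound on the integrand, so it cannot produce divergence. Blickle's formula concerns weights $c\varphi_{\mfa}$ with rational Newton polyhedra. If $m+u_D$ is a boundary point of $P(\varphi)$ whose only supporting hyperplane is irrational, then it lies in the interior of every rational polyhedron containing $P(\varphi)$, so no such comparison can detect it. A toric resolution does not remove the issue either, since on each smooth chart the separating direction can again lie on a coordinate face.

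The boundary case cannot be avoided. On $\bbC^n$ with $\varphi=0$, $D=\{z_1=0\}$ and $m=0$, one has $u_D=(0,1,\ldots,1)\in\partial\sigma^\vee$, and the separating direction is $e_1\in\partial\sigma$. This is exactly how non-klt behaviour is detected.

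The missing idea is a half-tube instead of a tube centred on the ray. Put $\mu=m+u_D$ and $h=-\inp{\mu}{\cdot}-g$, which is concave with $h(tx^*)\ge -g(0)$ for $t\ge0$, as you essentially show. Choose a compact $K\subset 2x_0+\sigma$ with nonempty interior and set $T=\{\lambda x^*+\tfrac12 v:\lambda\ge1,\ v\in K\}$. Since $x^*\in\sigma$ and $\sigma$ is a convex cone, $T\subset x_0+\sigma$ even when $x^*\in\partial\sigma$. Moreover, $T$ has infinite volume, and $h(\lambda x^*+\tfrac12 v)\ge\tfrac12 h(2\lambda x^*)+\tfrac12 h(v)$ is bounded below on it. This is the argument of Theorem~\ref{thm:negative_recession_iff_integrability}; no resolution or comparison is needed.

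\textbf{Step~2 and the global assertions.} These are also not yet proved.
\begin{enumerate}
\item Orthogonality of Fourier components gives only one direction: each term $c_m\chi^m$ of an $f$ in the stalk lies in the stalk. The converse involves infinitely many terms and needs a summation argument, such as Cauchy estimates or the Krull-intersection argument of Lemma~\ref{lem:invariant_analytic_ideal_is_monomial}.
\item A meromorphic germ has no Laurent expansion until its denominator has been replaced by a monomial.
\item Your Gordan step needs $P(\varphi)\subset\sigma^\vee$, not merely stability under $\sigma^\vee$.
\item More seriously, you work throughout at a torus-fixed point, which does not exist when $\dim\sigma<r$, a case the theorem covers. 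Near a point of $\orb(\sigma)=\{z_0\}\times T_{N'}$ the domain becomes $(x_0+\sigma)\times\Omega'$ with $\Omega'$ bounded. You then need bounds uniform in $x'$ (Remark~\ref{rem:uniform_coercivity}) and a Fubini slice argument for necessity.
\item The same obstruction defeats ``handled on the smaller charts'' at non-closed orbits. A germ there is not defined on a $K_N$-invariant neighbourhood, so it cannot be averaged.
\item $T_N$-invariance does not follow from what you have shown. The sheaf is visibly only $K_N$-invariant, since in general $\varphi\circ\tau_t\neq\varphi$ for $t\notin K_N$.
\end{enumerate}
The paper proves $T_N$-invariance separately (Theorem~\ref{thm:compact_torus_invariant_imply_torus_invariant}). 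It then uses Corollary~\ref{cor:global_meromorphic_monomial_generation} and Theorem~\ref{thm:local_to_global} to reduce both monomial generation and the global-section criterion to monomials at a single point of $\orb(\sigma)$. This avoids any pointwise Laurent analysis.
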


When $X_\sigma=\bbC^n$ and $D=0$, we have $u_D=\mathbf{1}$, and the main theorem specializes to the description of Rashkovskii and Guenancia for toric psh functions defined on $\bbC^n$.
If $\varphi_\mfa$ is a toric psh function associated to a nonzero monomial ideal $\mfa$ and $c>0$, then $\interior P(c\varphi_\mfa)=\interior (c\operatorname{Newt}(\mfa))$. Together with Theorem~\ref{thm:analytic_mis_extend_algebraic_mis}, the main theorem recovers Blickle's theorem.
When $\sigma$ is full-dimensional, $P(\varphi)$ coincides with the Newton convex body defined by An through a toric resolution (Proposition~\ref{prop:comparison_with_An}), so the main theorem extends An's description to pairs and to cones that are not full-dimensional.

We also obtain a monomial criterion for weights $\varphi-c\varphi_\mfa$, where $c>0$ is rational (Corollary~\ref{cor:toric_negative_ideal_weight_criterion}).
Such weights are in general not psh, so the main theorem does not apply to them directly. They arise naturally: for instance, the maximal non-lc ideal sheaf $\cJ'(X,D)$ with $D$ effective, introduced by Fujino--Schwede--Takagi, is computed by $\cI[(X,D);-\epsilon\varphi_{\mathfrak b}]$ with $\mathfrak b=\cI[(X,D);0]$ and sufficiently small $\epsilon>0$ (\cite[Lemma~7.5]{fujino2011supplements}; see Proposition~\ref{prop:analytic_characterization_of_maximal_non-lc_ideal}).

\begin{theorem}[Corollary~\ref{cor:toric_negative_ideal_weight_criterion}]
    Let $\varphi$ be a toric psh function on $X_\sigma$. Let $\mfa \subset \cO_{X_\sigma}$ be a nonzero coherent $T_N$-invariant ideal sheaf, and let $c$ be a positive rational number.
    Write
    $$
        \mfa = \chi^{a_1}\cO_{X_\sigma}+\cdots+\chi^{a_s}\cO_{X_\sigma}.
    $$
    Then $\cI[(X_\sigma,D);\varphi-c\varphi_\mfa]$ is a coherent $T_N$-invariant $\cO_{X_\sigma}$-submodule of $\cM_{X_\sigma}$ globally generated by finitely many meromorphic monomials.
    For every $m\in M$, the monomial $\chi^m$ is a global section of this sheaf if and only if $m+u_D+ca_j\in\interior P(\varphi)$ for every $j=1,\ldots,s$.
\end{theorem}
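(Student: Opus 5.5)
Reduce to the main theorem (Theorem~\ref{thm:toric_multiplier_ideal}) by comparing $\varphi-c\varphi_\mfa$ with the family $\varphi-c\log|\chi^{a_j}|$, $j=1,\ldots,s$. Up to a bounded term,
$$
\varphi_\mfa=\log\Bigl(\sum_{j}|\chi^{a_j}|\Bigr)=\max_j\log|\chi^{a_j}|+O(1).
$$
Hence, locally, $e^{-2(\varphi-c\varphi_\mfa)}$ is comparable to $\max_j |\chi^{a_j}|^{2c}e^{-2\varphi}$, which in turn is comparable to $\sum_j|\chi^{a_j}|^{2c}e^{-2\varphi}$. Integrability of $|f|^2e^{-2(\varphi-c\varphi_\mfa)}$ against the adapted volume form is therefore equivalent to integrability of $|f|^2|\chi^{a_j}|^{2c}e^{-2\varphi}$ for every $j$. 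This gives
$$
\cI[(X_\sigma,D);\varphi-c\varphi_\mfa]=\bigcap_{j=1}^s\cI[(X_\sigma,D);\varphi-c\log|\chi^{a_j}|].
$$
Both sides are subsheaves of $\cM_{X_\sigma}$, because $\chi^{a_j}$ is a unit on the torus and the conditions are local.

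The next step is to compute each term in the intersection. When $c=p/q$ is rational, $c\log|\chi^{a_j}|=\tfrac1q\log|\chi^{pa_j}|$. The weight $\varphi-c\log|\chi^{a_j}|$ is toric but generally not psh. For monomials this causes no trouble: since $|\chi^m|^2|\chi^{a_j}|^{2c}=|\chi^{m}|^2|\chi^{pa_j}|^{2/q}$, the torus-invariance and Fourier/Parseval reduction used in the proof of Theorem~\ref{thm:toric_multiplier_ideal} turn the integrability of $\chi^m$ into a convex-geometric condition.

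I plan to argue directly that $\chi^m$ lies in $\cI[(X_\sigma,D);\varphi-c\log|\chi^{a_j}|]$ if and only if $\chi^m|\chi^{a_j}|^{c}$ is $L^2$ against $e^{-2\varphi}$ times the adapted volume form. Passing to logarithmic coordinates on $T_N$, this integral becomes $\int e^{2\inp{m+ca_j+u_D}{x}-2\varphi(x)}\,dx$ over the appropriate region, with the same exponent shift as for a genuine monomial with exponent $m+ca_j\in M_\bbQ$. I would therefore extract from the proof of the main theorem the purely real-analytic statement that this integral converges if and only if $m+ca_j+u_D\in\interior P(\varphi)$, valid for any $m'\in M_\bbQ$ in place of $m$. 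I expect this to be the main obstacle. I would first check that the proof of Theorem~\ref{thm:toric_multiplier_ideal} uses integrality of $m$ only through the exponent $\inp{m}{\cdot}$; if it uses more, I would instead take a $q$-th root cover, or replace $m$ by $qm$ and the weight by $q\varphi$, to land back in the integral case. Intersecting over $j$ then gives the stated criterion.

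For coherence and finite generation, note that $T_N$-invariance of the sheaf follows from the toric invariance of $\varphi$, of $\varphi_\mfa$ up to a bounded term, and of the volume form. For each fixed $j$, the set of $m\in M$ satisfying the criterion is $M\cap(\interior P(\varphi)-u_D-ca_j)$. Since $P(\varphi)$ is stable under adding the dual cone, as recession cone, this set is a finitely generated module over the semigroup $\sigma^\vee\cap M$; this is the same argument used in the main theorem. The intersection over $j$ of finitely many such modules is again finitely generated, because the semigroup ring is Noetherian. The sheaf is then the $T_N$-invariant subsheaf generated by these finitely many meromorphic monomials. To see that a $T_N$-invariant section is determined by its monomials, I would use the Laurent expansion argument from the main theorem, which applies verbatim to the intersection.
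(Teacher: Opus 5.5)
Your reduction to $\bigcap_j\cI[(X_\sigma,D);\varphi-c\log|\chi^{a_j}|]$ is correct. So is your remark that the monomial integral in logarithmic coordinates sees only the exponent $m+ca_j+u_D$. The gap is in the sheaf-theoretic half of the statement.

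Because you keep the pair $(X_\sigma,D)$ and put $|\chi^{a_j}|^{2c}$ into the weight, the weight $\varphi-c\log|\chi^{a_j}|$ is not quasi-psh. So Corollary~\ref{cor:multiplier_ideal_coherence} gives no coherence. Theorem~\ref{thm:compact_torus_invariant_imply_torus_invariant}, Lemma~\ref{lem:invariant_analytic_ideal_is_monomial} and Corollary~\ref{cor:global_meromorphic_monomial_generation} all take coherence as input. Your substitutes do not close this gap:
\begin{itemize}
\item Toric invariance of $\varphi$ is only $K_N$-invariance, and $\varphi\circ\tau_t-\varphi$ need not be locally bounded for $t\in T_N\setminus K_N$. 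On $X_\sigma=\bbC^2$ with convex function $g(x)=\sup_{k\ge1}(kx_1+x_2/k-k^2)$, one has $g(x+(1,0))-g(x)\to\infty$ as $x_2\to-\infty$ with $x_1$ fixed. So your argument gives only $K_N$-invariance of the sheaf.
\item The proof of Theorem~\ref{thm:toric_multiplier_ideal} contains no Laurent or Parseval argument. It first obtains coherence, $T_N$-invariance and monomial generation from the structural results above. Only then does it use Theorem~\ref{thm:local_to_global} to test monomials at a single point of $\orb(\sigma)$.
\item Knowing which monomials are sections determines the sheaf only once the sheaf is known to be monomially generated. Your step ``the sheaf is then the $T_N$-invariant subsheaf generated by these finitely many monomials'' is therefore circular.
\item Without $T_N$-invariance, the ``if'' direction of your criterion gives integrability only near points of $\orb(\sigma)$, unless you redo the computation at every orbit.
\end{itemize}
A direct Laurent/Parseval proof of monomial generation may exist, but you would have to supply it, for meromorphic germs and at every orbit.

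The missing idea, and the paper's route through Proposition~\ref{prop:toric_negative_ideal_weight}, is to put the factor into the adapted volume form rather than into the weight. Choose $q$ with $q(K_{X_\sigma}+D)$ Cartier and $qc\in\bbZ$, let $\beta$ generate $\omega_{X_\sigma}^{[q]}(qD)$, and set $B_j=\divisor(\chi^{a_j})$. Then $\chi^{qca_j}\beta$ generates $\omega_{X_\sigma}^{[q]}(q(D-cB_j))$, and $dV_{\chi^{qca_j}\beta}=|\chi^{a_j}|^{2c}\,dV_\beta$. Hence $\cI[(X_\sigma,D);\varphi-c\log|\chi^{a_j}|]=\cI[(X_\sigma,D-cB_j);\varphi]$, whose weight is again the psh function $\varphi$. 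This is where the rationality of $c$ is used; no root cover is needed. Since $u_{D-cB_j}=u_D+ca_j$, Theorem~\ref{thm:toric_multiplier_ideal} applies verbatim to each pair $(X_\sigma,D-cB_j)$. It gives coherence, $T_N$-invariance, and the criterion $m+u_D+ca_j\in\interior P(\varphi)$. A finite intersection of coherent $T_N$-invariant submodules of $\cM_{X_\sigma}$ is again such a submodule, and Corollary~\ref{cor:global_meromorphic_monomial_generation} then gives finite monomial generation.
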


As an application, we recover the known facts that a normal toric variety with its reduced toric boundary is lc and that a $\bbQ$-Gorenstein toric variety is klt (cf. \cite[Proposition~3.7]{kollar1997singularities}).

There are two crucial ingredients for the proof. First, we establish transformation formulas, coherence, and strong openness for $\cI[(X,D);\varphi]$ with quasi-psh weights, locally the sum of a psh function and a smooth function, when the boundary may have negative coefficients. We also show that these sheaves are ideals when $D$ is effective and $\varphi$ is merely locally bounded above.
Second, we show that a coherent $K_N$-invariant analytic $\cO_{X_\sigma}$-submodule of $\cM_{X_\sigma}$ is $T_N$-invariant and globally generated by finitely many meromorphic monomials (Theorem~\ref{thm:compact_torus_invariant_imply_torus_invariant} and Corollary~\ref{cor:global_meromorphic_monomial_generation}).
To determine which meromorphic monomials are sections, we use convex analysis, particularly the relation between Legendre transforms and recession functions, to express local integrability in terms of Newton convex bodies.

The paper is organized as follows. In Section~\ref{sec:preliminaries}, we recall preliminaries on toric varieties and psh functions. In Section~\ref{sec:multiplier_ideal_sheaves_on_normal_complex_analytic_spaces}, we discuss analytic multiplier ideal sheaves on normal complex analytic spaces. In Section~\ref{sec:legendre_transform_and_recession_functions}, we give the results on Legendre transforms and recession functions needed for the integrability criterion. In Section~\ref{sec:analysis_on_toric_varieties}, we study Newton convex bodies and invariant analytic sheaves. Finally, in Section~\ref{sec:multiplier_ideals_on_a_toric_complex_space}, we prove the main theorem and give its applications.

\medskip

\noindent \textbf{Acknowledgements.} This work was supported by the National Research Foundation of Korea(NRF) grant funded by the Korea government(MSIT) (RS-2025-00556878).

\section{Preliminaries} \label{sec:preliminaries}
We denote by $\bbZ_{\ge 0}$ (resp. $\bbZ_{> 0}$) the set of nonnegative (resp. positive) integers. The sets $\bbR_{\ge 0}$ and $\bbR_{> 0}$ are defined similarly.
\subsection{Toric varieties}
Let $N$ be a free $\bbZ$-module of rank $r$ and denote by $N_\bbR$ the $\bbR$-vector space obtained by extending scalars to $\bbR$. Let $M$ be the dual $\bbZ$-module of $N$ and $M_\bbR$ the corresponding $\bbR$-vector space. We refer to \cite{oda1988convex, fulton1993introduction, cox2011toric} for details on toric varieties.
\begin{definition}
    Let $\sigma$ be a subset of $N_{\bbR} = N \otimes_\bbZ \bbR$. It is said to be a {\bf convex polyhedral cone} if there exist a finite number of vectors $n_1,\ldots,n_s \in N_\bbR$ such that
    $$
        \sigma = \cone(n_1,\ldots, n_s) = \{ a_1n_1 + \cdots + a_sn_s \, :\, \text{$a_i \ge 0$ for all $i$}\}.
    $$
    For a convex polyhedral cone $\sigma \subset N_\bbR$,
    \begin{enumerate}
        \item $\dim\sigma$ is the {\bf dimension} of the smallest $\bbR$-subspace containing $\sigma$.
        \item $\sigma$ is said to be {\bf rational} if a set of generators of $\sigma$ can be chosen from $N$.
        \item $\sigma$ is said to be {\bf strongly convex} if $\sigma \cap (-\sigma) = \{ \bfze\}$.
    \end{enumerate}
\end{definition}

Since $M_\bbR$ is the dual of $N_\bbR$ as an $\bbR$-module, we have an $\bbR$-bilinear pairing 
$$
\inp{\,}{}: M_\bbR \times N_\bbR \to \bbR,
$$
which extends the canonical pairing between $M$ and $N$. 

\begin{definition}
    Let $\sigma \subset N_\bbR$ be a convex cone. The {\bf dual cone} of $\sigma$, denoted by $\sigma^\vee$, is defined by
    $$
        \sigma^\vee = \{ m \in M_\bbR \,:\, \text{$\inp{m}{n} \ge 0$ for all $n \in \sigma$} \}.
    $$
\end{definition}

If $\sigma$ is a rational polyhedral cone in $N_\bbR$, it is well known that $\sigma^\vee \cap M$ is a finitely generated semigroup, say
$$
\sigma^\vee \cap M = \bbZ_{\ge 0} m_1 + \cdots + \bbZ_{\ge 0} m_p.
$$
Let $\bbC[\sigma^\vee \cap M]$ be the $\bbC$-algebra generated by $\chi^{m_1},\ldots, \chi^{m_p}$ with exponential law $\chi^{m + m'} = \chi^{m} \cdot \chi^{m'}$.
Denote by $X_\sigma$ the maximal spectrum of $\bbC[\sigma^\vee \cap M]$, that is, 
$$
 X_\sigma = \{ \mathfrak{m} \,:\, \text{$\mathfrak{m}$ is a maximal ideal of $\bbC[\sigma^\vee \cap M]$}\}.
$$
There is a canonical one-to-one correspondence between $X_\sigma$ and the set of all $\bbC$-algebra homomorphisms from $\bbC[\sigma^\vee \cap M]$ to $\bbC$. Moreover, the surjective homomorphism from $\bbC[z_1,\ldots, z_p]$ to $\bbC[\sigma^\vee \cap M]$, determined by $z_i \mapsto \chi^{m_i}$, allows us to consider $X_\sigma$ as a subset of $\bbC^p$. Thus $X_\sigma$ has a complex analytic structure naturally induced from $\bbC^p$. Its complex analytic structure is, of course, independent of the choice of generators of $\sigma^\vee \cap M$.

\begin{definition}
    The {\bf affine toric complex analytic space} $X_\sigma$ associated to a strongly convex rational polyhedral cone $\sigma$ is the maximal spectrum of $\bbC[\sigma^\vee \cap M]$, endowed with the induced complex analytic structure.
\end{definition}

\begin{definition}
    Let $\Delta$ be a nonempty collection of strongly convex rational polyhedral cones in $N_\bbR$. We call $\Delta$ a {\bf fan} if it satisfies
    \begin{enumerate}
        \item Every face of a cone $\sigma \in \Delta$ is contained in $\Delta$.
        \item For any two cones $\sigma,\sigma' \in \Delta$, their intersection $\sigma \cap \sigma'$ is a face of $\sigma$ and $\sigma'$ simultaneously.
    \end{enumerate}
\end{definition}

For two cones $\sigma$ and $\sigma'$ in a fan $\Delta$ with a common face $\tau = \sigma \cap \sigma'$, the affine toric complex analytic space $X_\tau$ can be regarded as an open subset of both $X_\sigma$ and $X_{\sigma'}$. We can glue $X_\sigma$ and $X_{\sigma'}$ together along $X_\tau$. In this way, we can glue all affine toric complex analytic spaces obtained from the cones in $\Delta$ and the resulting space with complex analytic structure is called a {\bf toric complex analytic space}, denoted by $X_\Delta$.

Note that the affine toric complex analytic space $X_{\{\bfze\}}$ corresponding to the smallest cone $\{\bfze\}$ in $\Delta$ is the algebraic torus $T_N = \Hom_\bbZ (M, \bbC^*)$. The torus $T_N$ acts on $X_\Delta$ by extending its multiplication action on the dense open torus $T_N \subset X_\Delta$.  For $\sigma \in \Delta$, denote by $\orb(\sigma)$ the corresponding $T_N$-orbit in $X_\Delta$.

\begin{proposition}[{\cite[Proposition~1.6]{oda1988convex}}]
    The cones of a fan $\Delta$ are in one-to-one correspondence with $T_N$-orbits in $X_\Delta$.  Moreover, the following hold:
    \begin{enumerate}
        \item $\orb(\{\bfze\}) = X_{\{\bfze\}} = T_N$.
        \item For $\sigma \in \Delta$, the complex analytic dimension of $\orb (\sigma)$ coincides with the codimension $r - \dim \sigma$ of $\sigma$ in $N_\bbR$.
        \item For $\sigma,\tau \in \Delta$, $\tau$ is a face of $\sigma$ if and only if $\orb(\sigma)$ is contained in the closure of $\orb(\tau)$.
        \item For $\sigma \in \Delta$, $\orb(\sigma)$ is the unique closed $T_N$-orbit in $X_\sigma$ and we have
        $$
            X_\sigma = \coprod_{\tau < \sigma} \orb(\tau).
        $$
    \end{enumerate}
\end{proposition}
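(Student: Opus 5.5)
The plan is to reduce everything to the affine pieces and to the semigroup algebra $\bbC[\sigma^\vee\cap M]$, and to use the action of $T_N$ on it by characters. The argument uses only the construction of $X_\Delta$ by gluing the $X_\sigma$ along common faces, together with the orbit decomposition of each $X_\sigma$.

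\textbf{Existence and compatibility of the action.} First I construct the action of $T_N=\Hom_\bbZ(M,\bbC^*)$ on each affine piece $X_\sigma$. A point $x\in X_\sigma$ is a $\bbC$-algebra homomorphism (equivalently a semigroup homomorphism $\sigma^\vee\cap M\to\bbC$ with $0\mapsto 1$), and $t\in T_N$ is a group homomorphism $M\to\bbC^*$. I define $(t\cdot x)(m)=t(m)x(m)$ for $m\in\sigma^\vee\cap M$. Fixing generators $m_1,\dots,m_p$, this is the restriction to $X_\sigma\subset\bbC^p$ of the holomorphic diagonal action $(z_i)\mapsto (t(m_i)z_i)$. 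Hence the action is holomorphic, and on $T_N=X_{\{\bfze\}}$ it is group multiplication. For a face $\tau<\sigma$, the open inclusion $X_\tau\subset X_\sigma$ comes from the localization $\bbC[\sigma^\vee\cap M]\to\bbC[\tau^\vee\cap M]$, which inverts a character $\chi^{m_0}$. The formula is compatible with this localization, so the actions glue to a holomorphic action on $X_\Delta$ that extends multiplication on the dense torus.

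\textbf{The bijection between cones and orbits.} For each $\sigma$ I introduce the distinguished point $x_\sigma\in X_\sigma$, defined by $x_\sigma(m)=1$ if $m\in\sigma^\perp\cap M$ and $x_\sigma(m)=0$ otherwise. This is a semigroup homomorphism because $\sigma^\perp\cap\sigma^\vee$ is a face of $\sigma^\vee$. Set $\orb(\sigma)=T_N\cdot x_\sigma$.

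\textbf{Key step: the decomposition of $X_\sigma$.} Given $x\in X_\sigma$, let $S_x=\{m\in\sigma^\vee\cap M : x(m)\neq 0\}$. Since $x$ is multiplicative, $S_x$ has the property that $m+m'\in S_x$ if and only if $m,m'\in S_x$. Such subsets are exactly the lattice points of faces of $\sigma^\vee$. These faces have the form $\tau^\perp\cap\sigma^\vee$ for a unique face $\tau<\sigma$, by the face duality of polyhedral cones.

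Next, $x$ restricted to $\tau^\perp\cap M$ is a nowhere-vanishing multiplicative map. It extends to a homomorphism from the group $\tau^\perp\cap M$, which the cone $\tau^\perp\cap\sigma^\vee$ generates, into $\bbC^*$. Because $\bbC^*$ is divisible and $M/(\tau^\perp\cap M)$ is torsion-free, this extends further to some $t\in T_N$, and then $x=t\cdot x_\tau$. Conversely, the orbit of $x_\tau$ consists exactly of the points with $S_x=\tau^\perp\cap\sigma^\vee\cap M$. This gives the disjoint union in (4), and it also shows $\orb(\tau)$ is the same set whether computed in $X_\tau$ or in $X_\sigma$.

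\textbf{Remaining items.} Since $X_\Delta=\bigcup_\sigma X_\sigma$, the correspondence between cones and orbits is bijective, and (1) holds because $x_{\{\bfze\}}$ is the identity of $T_N$.

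For (2), the stabilizer of $x_\sigma$ is the set of $t$ with $t|_{\sigma^\perp\cap M}=1$. Hence $\orb(\sigma)\cong\Hom(\sigma^\perp\cap M,\bbC^*)$, a torus of dimension $\operatorname{rank}(\sigma^\perp\cap M)=r-\dim\sigma$.

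For (3) and the closedness in (4), I use the description by vanishing loci. In $X_\sigma$, the closure of $\orb(\tau)$ consists of the points with $S_x\subset\tau^\perp$. One inclusion holds because the vanishing of $\chi^m$ is a closed condition. For the other, I take the limit $\lambda_v(s)\cdot x_\tau$ as $s\to 0$ along a one-parameter subgroup with $v$ in the relative interior of the larger cone. Since $\tau^\perp\supset\sigma^\perp$ exactly when $\tau<\sigma$, this gives (3). Then $\orb(\sigma)$ is the smallest stratum, so it is closed, and it is the unique closed orbit in $X_\sigma$.

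\textbf{Main obstacle.} I expect the hardest step to be the claim that a subset $S_x$ with the property above is precisely the set of lattice points of a face, and the accompanying identification of faces of $\sigma^\vee$ with faces of $\sigma$. I would handle it by taking $m$ in the relative interior of $\operatorname{cone}(S_x)$ and showing that $\operatorname{cone}(S_x)$ equals the face of $\sigma^\vee$ cut out by a supporting hyperplane through $m$.
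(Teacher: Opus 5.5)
The paper gives no proof of this proposition; it is quoted from \cite{oda1988convex}. Your argument is the standard one used there: points of $X_\sigma$ as semigroup homomorphisms, distinguished points $x_\tau$, the nonvanishing set $S_x$ as the lattice points of a face $\tau^\perp\cap\sigma^\vee$, extension of $x$ from $\tau^\perp\cap\sigma^\vee\cap M$ to a character of $M$, and one-parameter-subgroup limits for closures. Everything you do inside a single chart $X_\sigma$ is correct. That includes your plan for the face lemma. Take $m\in S_x$ in the relative interior of $\cone(S_x)$, for instance the sum of the semigroup generators lying in $S_x$, and let $F$ be the face of $\sigma^\vee$ containing $m$ in its relative interior. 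For $m'\in F\cap M$ we have $km-m'\in F\cap M$ when $k\gg 0$, and this gives $F\cap M\subset S_x$. The face property of $F$ gives $S_x\subset F$.

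The step that fails as written is the passage from one chart to the whole fan in (3). Your criterion ``$\tau^\perp\supset\sigma^\perp$ exactly when $\tau<\sigma$'' holds when $\tau$ and $\sigma$ are faces of a common cone $\sigma'$, because a face satisfies $\sigma=\sigma'\cap\operatorname{span}\sigma$. It is false for arbitrary cones of $\Delta$: if $\dim\sigma=r$, then $\sigma^\perp=0$, so every $\tau\in\Delta$ satisfies it, including cones that are not faces of $\sigma$. Also, your description of $\overline{\orb(\tau)}\cap X_\sigma$ presupposes $\orb(\tau)\subset X_\sigma$, i.e.\ $\tau<\sigma$. That is exactly what the direction ``$\orb(\sigma)\subset\overline{\orb(\tau)}\Rightarrow\tau<\sigma$'' has to prove.

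The fix is short. Since $X_\sigma$ is a $T_N$-stable open neighborhood of $\orb(\sigma)$, the inclusion $\orb(\sigma)\subset\overline{\orb(\tau)}$ forces $\orb(\tau)\cap X_\sigma\neq\emptyset$, hence $\orb(\tau)\subset X_\sigma$. By (4), $\orb(\tau)=\orb(\tau')$ for some face $\tau'<\sigma$, and injectivity gives $\tau=\tau'$.

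Injectivity of $\sigma\mapsto\orb(\sigma)$ also needs one more sentence than ``$X_\Delta=\bigcup_\sigma X_\sigma$''. You need the gluing identity $X_{\tau_1}\cap X_{\tau_2}=X_{\tau_1\cap\tau_2}$. If $\orb(\tau_1)=\orb(\tau_2)$, this orbit lies in $X_{\tau_1\cap\tau_2}$, so it equals $\orb(\tau')$ for a face $\tau'$ of $\tau_1\cap\tau_2$. Uniqueness within $X_{\tau_1}$ and within $X_{\tau_2}$ then gives $\tau_1=\tau'=\tau_2$. With these two additions the proof is complete.
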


The orbit--cone correspondence also describes the $T_N$-invariant prime divisors on $X_\Delta$. We recall this description and the Cartier criterion that will be used later in Section~\ref{sec:multiplier_ideals_on_a_toric_complex_space}.

Denote by $\Delta(1)$ the set of rays of $\Delta$ and by $v_\rho$ the primitive generator of $\rho \in \Delta(1)$. The divisors
$$
    D_\rho := \overline{\orb(\rho)}, \quad \rho \in \Delta(1),
$$
are precisely the $T_N$-invariant prime divisors on $X_\Delta$. Thus, a $T_N$-invariant Weil $\bbQ$-divisor has the form
$$
    D = \sum_{\rho \in \Delta(1)} a_\rho D_\rho, \quad a_\rho \in \bbQ.
$$
For every $m \in M$, the character $\chi^m$, viewed as a meromorphic function on $X_\Delta$, satisfies
$$
    \divisor(\chi^m) = \sum_{\rho \in \Delta(1)} \inp{m}{v_\rho} D_\rho.
$$

The Cartier property of a $T_N$-invariant divisor can be expressed in terms of the primitive ray generators.

\begin{theorem}[{\cite[Theorem~4.2.8]{cox2011toric}}] \label{thm:toric_cartier_criterion}
    Let $D = \sum_{\rho \in \Delta(1)} a_\rho D_\rho$ be a $T_N$-invariant Weil divisor on $X_\Delta$. Then $D$ is Cartier if and only if, for every $\sigma \in \Delta$, there exists $m_\sigma \in M$ such that
    $$
        \inp{m_\sigma}{v_\rho} = a_\rho \quad \text{for every $\rho \in \sigma(1)$,}
    $$
    where $\sigma(1)$ denotes the set of rays of $\sigma$. In this case,
    $$
        D|_{X_\sigma} = \divisor (\chi^{m_\sigma}).
    $$
\end{theorem}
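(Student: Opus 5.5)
We prove the two implications separately, working on the affine pieces $X_\sigma$ for $\sigma \in \Delta$; these cover $X_\Delta$, and $D|_{X_\sigma}$ only involves the rays in $\sigma(1)$. Throughout we use two standard facts. First, $\chi^m$ is a nowhere-vanishing regular function on $X_\sigma$ if and only if $\pm m \in \sigma^\vee$, that is, $m \in \sigma^\perp \cap M$. Second, the displayed formula $\divisor(\chi^m)=\sum_\rho \inp{m}{v_\rho} D_\rho$ holds.

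\emph{The condition is sufficient.} Suppose that for each $\sigma$ there is $m_\sigma\in M$ with $\inp{m_\sigma}{v_\rho}=a_\rho$ for all $\rho\in\sigma(1)$. On $X_\sigma$ the only prime divisors present are $D_\rho$ with $\rho\in\sigma(1)$. Indeed, the divisor $D_\rho$ meets $X_\sigma$ exactly when $\rho$ is a face of $\sigma$, by the orbit--cone correspondence. Hence $D|_{X_\sigma} = \divisor(\chi^{m_\sigma})|_{X_\sigma}$, so $D$ is locally principal and therefore Cartier. The last assertion of the statement follows from this same computation.

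\emph{The condition is necessary.} Suppose $D$ is Cartier and fix $\sigma$. Choose a point $x_\sigma$ in the closed orbit $\orb(\sigma)$. Near $x_\sigma$ we have $D = \divisor(f)$ for some meromorphic $f$. We want to replace $f$ by a character, and the plan is to use torus invariance together with the grading.

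In the algebraic setting, the local ring $\cO_{X_\sigma,x_\sigma}$ sees every $T_N$-invariant open neighborhood of $x_\sigma$, and the only such neighborhood is $X_\sigma$ itself, because $\orb(\sigma)$ lies in the closure of every orbit in $X_\sigma$. So $\cO(D)$ is principal on all of $X_\sigma$. The module $\Gamma(X_\sigma,\cO(D))$ is $M$-graded, spanned by the $\chi^m$ with $\inp{m}{v_\rho}\ge -a_\rho$. A graded module that is locally free of rank one at the fixed point is generated by a single homogeneous element, by graded Nakayama at the maximal ideal of $x_\sigma$. That homogeneous generator is some $\chi^{-m_\sigma}$. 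Then $\divisor(\chi^{-m_\sigma}) + D = 0$ on $X_\sigma$, which gives $\inp{m_\sigma}{v_\rho}=a_\rho$ for all $\rho\in\sigma(1)$.

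In the analytic setting the same argument should go through by comparing with the algebraic structure. The sheaf $\cO(D)$ is the analytification of the algebraic divisorial sheaf. Local freeness is detected on completions at $x_\sigma$, and the analytic and algebraic local rings have the same completion. This transfers the Cartier property back to the algebraic setting.

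\emph{Main obstacle.} The delicate step is the graded Nakayama argument that produces a \emph{homogeneous} local generator. If $f$ is any local generator, we expand it into its $M$-homogeneous components. Reducing modulo $\mathfrak m_{x_\sigma}\cO(D)$, the one-dimensional space $\cO(D)\otimes k(x_\sigma)$ is spanned by the image of a single character $\chi^{-m_\sigma}$. By Nakayama, that character then generates $\cO(D)$ near $x_\sigma$, and hence, by the invariance argument above, on all of $X_\sigma$.
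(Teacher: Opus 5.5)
Your proof is correct. The paper gives no argument of its own here, only the citation \cite[Theorem~4.2.8]{cox2011toric}, and your necessity argument is essentially the standard proof behind that citation:
\begin{enumerate}
\item since $\cO(D)$ is invertible, its fiber at a point $x_\sigma\in\orb(\sigma)$ is one-dimensional;
\item this fiber is spanned by images of characters, because $\Gamma(X_\sigma,\cO(D))$ generates the stalk and is spanned by the $\chi^m$ with $\inp{m}{v_\rho}\ge -a_\rho$;
\item Nakayama makes one such $\chi^{-m_\sigma}$ a local generator;
\item since every $D_\rho$ with $\rho\in\sigma(1)$ contains $\orb(\sigma)$, the local equality $D=\divisor(\chi^{m_\sigma})$ near $x_\sigma$ forces $a_\rho=\inp{m_\sigma}{v_\rho}$ for all $\rho\in\sigma(1)$.
\end{enumerate}

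What you add is the analytic-to-algebraic transfer, which the paper uses tacitly. It is sound, with two remarks. First, the identification of the analytic divisorial sheaf with the analytification of the algebraic one deserves a line of justification: both are reflexive of rank one on a normal space and agree off a set of codimension at least two. Second, once that identification is in place, completions are not needed. It suffices to compare fibers at $x_\sigma$ using $\cO^{\mathrm{an}}_{x_\sigma}/\mathfrak{m}\,\cO^{\mathrm{an}}_{x_\sigma}=\bbC$, where $\mathfrak{m}$ is the algebraic maximal ideal.

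Two wording points. ``Graded Nakayama at the fixed point'' is literal only when $\sigma$ is full-dimensional. Otherwise $\orb(\sigma)$ contains no fixed point and the maximal ideal of $x_\sigma$ is not $M$-homogeneous. The ordinary Nakayama argument in your last paragraph is what actually does the work, and it needs no grading. Also, your sentence ``So $\cO(D)$ is principal on all of $X_\sigma$'' is justified only after that character generator has been produced, so it should come after the Nakayama step.
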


As a consequence of the above theorem, a $T_N$-invariant Weil $\bbQ$-divisor $D$ is $\bbQ$-Cartier if and only if the same condition holds with $m_\sigma \in M_\bbQ$.

We choose the canonical divisor
$$
    K_{X_\Delta} = -\sum_{\rho \in \Delta(1)} D_\rho,
$$
which is the divisor of the meromorphic top-degree form
$$
    \frac{d\chi^{m_1}}{\chi^{m_1}} \wedge \cdots \wedge \frac{d\chi^{m_r}}{\chi^{m_r}}
$$
on the open torus $T_N \subset X_\Delta$, where $m_1,\ldots,m_r$ is a basis of $M$.

For a $T_N$-invariant Weil $\bbQ$-divisor $D = \sum_{\rho \in \sigma(1)} a_\rho D_\rho$ on an affine toric variety $X_\sigma$, the divisor $K_{X_\sigma}+D$ is $\bbQ$-Cartier if and only if there exists $u_D \in M_\bbQ$ such that
$$
    \inp{u_D}{v_\rho} = 1-a_\rho \quad \text{for every $\rho \in \sigma(1)$.}
$$
In this case, for any positive integer $q$ such that $qu_D \in M$,
$$
    q(K_{X_\sigma} + D) = \divisor (\chi^{-qu_D}).
$$

\subsection{Plurisubharmonic functions}

\begin{definition}
    Let $\Omega$ be an open subset of $\bbC^n$ and let $\varphi:\Omega \to \Rninf$ be a function which is not identically $-\infty$ on any connected component of $\Omega$. Then $\varphi$ is said to be {\bf plurisubharmonic} ({\bf psh} for short) if 
    \begin{enumerate}
        \item $\varphi$ is upper semicontinuous on $\Omega$, and
        \item for every line $\ell$ in $\bbC^n$ such that $\ell \cap \Omega \neq \emptyset$, the restriction of $\varphi$ to $\ell \cap \Omega$ is subharmonic or identically $-\infty$.
    \end{enumerate}
\end{definition}

Plurisubharmonicity is invariant under holomorphic coordinate changes. This notion naturally extends to functions on complex manifolds. For complex analytic spaces, there are several equivalent ways to define plurisubharmonicity. Let us adopt the following definition by Narasimhan \cite{narasimhan1962levi}.

\begin{definition}
    Let $V$ be an open subset of a complex analytic space $X$ and let $\varphi: V \to \Rninf$ be a function which is not identically $-\infty$ on any connected component of $V$. The function $\varphi$ is said to be {\bf plurisubharmonic} ({\bf psh} for short) if, for each $x \in V$, there exist an open neighborhood $U_x$ of $x$ in $V$, an open subset $W$ of $\bbC^p$, a closed embedding $i: U_x \hookrightarrow W$ and a psh function $\Phi$ on $W$ such that $\varphi|_{U_x} = \Phi \circ i$.
\end{definition}

\begin{theorem}[\cite{fornaess1980levi}] \label{thm:psh_iff_subharmonic_along_discs}
    Let $X$ be a complex analytic space and let $\varphi:X \to \bbR \cup \{ -\infty \}$ be an upper semicontinuous function on $X$ which is not identically $-\infty$ on any connected component of $X$. Then $\varphi$ is psh if and only if, for every holomorphic map $f: \bbD \to X$ from the unit disk $\bbD$ to $X$, the function $\varphi \circ f$ is either subharmonic on $\bbD$ or identically $-\infty$. 
\end{theorem}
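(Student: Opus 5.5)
The statement just given is Theorem~\ref{thm:psh_iff_subharmonic_along_discs}, the Fornæss--Narasimhan characterization. I would prove the two implications separately; the forward one is easy and the converse carries the substance.

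For the forward direction, suppose $\varphi$ is psh in Narasimhan's sense and let $f:\bbD\to X$ be holomorphic. Subharmonicity is local, so fix $t_0\in\bbD$. Choose a neighborhood $U_x$ of $x=f(t_0)$, a closed embedding $i:U_x\hookrightarrow W\subset\bbC^p$ and a psh $\Phi$ on $W$ with $\varphi|_{U_x}=\Phi\circ i$. On a small disc around $t_0$ we then have $\varphi\circ f=\Phi\circ(i\circ f)$. Here $i\circ f$ is a holomorphic map into an open set of $\bbC^p$, and a psh function composed with a holomorphic map is subharmonic or identically $-\infty$. Upper semicontinuity of $\varphi\circ f$ on all of $\bbD$ is automatic. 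Being locally either subharmonic or $\equiv-\infty$ gives the global dichotomy on the connected disc, because the set where $\varphi\circ f\equiv-\infty$ near a point is open and closed. For closedness, use that a subharmonic function on a disc is not $-\infty$ on any open subset.

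For the converse, assume $\varphi$ is u.s.c. and subharmonic along every holomorphic disc, and work locally near $x$ with $X\cap U\subset\bbC^p$ closed.

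1. Use resolution of singularities. Take a resolution $\pi:\tilde X\to X$ near $x$. Every disc in the manifold $\tilde X$ maps to a disc in $X$, so $\varphi\circ\pi$ is u.s.c. and subharmonic along discs, hence psh on $\tilde X$. On a manifold this is the classical fact: apply it to linear discs in charts.

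2. Extend $\varphi$ to an ambient psh function $\Phi$ on a neighborhood $W$ of $x$ in $\bbC^p$. I would first treat the continuous case and then approximate. Near $x$, replace $\varphi$ by a decreasing sequence of continuous functions that are psh along discs. These can be built on $\tilde X$ by regularization and pushed down, provided the regularization stays constant on the fibers of $\pi$. Alternatively, take $\varphi_j=\sup\{u\le \varphi_j^{\mathrm{cont}}\}$-type envelopes.

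3. For the continuous case, show the following. Given a strictly psh exhaustion $\psi$ of a Stein neighborhood $U\cap X$ and $\epsilon>0$, the function $\varphi+\epsilon\psi$ is the restriction of a psh function on $W$. To construct it, take local maxima of finitely many ambient psh functions. Each ambient function agrees with $\varphi+\epsilon\psi$ up to $\epsilon$ on a piece of $X$ and is much smaller off that piece. They can be built as $\max$ of functions of the form $\log|h|$ or of $\varphi$ composed with a holomorphic retraction on a small neighborhood of a smooth stratum. Then use the gluing lemma for maxima of psh functions, and let $\epsilon\to0$ with decreasing limits.

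4. Conclude by taking decreasing limits. The limit of decreasing psh functions is psh, or $\equiv-\infty$ on a component, which is excluded because $\varphi\not\equiv-\infty$ there. Its restriction to $X$ is $\varphi$.

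I expect the main obstacle to be step 3: extending a disc-subharmonic function across the singular locus to an ambient psh function. This is genuinely the content of Fornæss--Narasimhan's work, where the descent from the resolution is handled by a Stein-neighborhood argument. In the paper itself I would simply cite \cite{fornaess1980levi} for the converse and prove only the forward direction as above.
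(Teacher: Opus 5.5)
The paper gives no proof of this statement. It is quoted from \cite{fornaess1980levi} and used as a black box, so your final plan (prove the easy implication, cite the converse) is consistent with the paper. Your forward direction is correct. Writing $\varphi\circ f=\Phi\circ(i\circ f)$ locally, and running the open--closed argument on the set of points near which $\varphi\circ f\equiv-\infty$, correctly gives the global dichotomy on $\bbD$.

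Your sketch of the converse, however, is not a viable route, and the failures are concrete.
\begin{itemize}
\item \textbf{Step 2.} Nothing makes a regularization of $\varphi\circ\pi$ constant on the fibres of $\pi$. If $X$ is not normal, the fibres can be disconnected. For the node $\{zw=0\}$ the resolution is two disjoint discs, and convolutions on the two discs generally take different values at the two preimages of the singular point. If $X$ is normal, chartwise convolutions on $\tilde X$ do not glue to a psh function. Richberg-type gluing would need strict plurisubharmonicity, which is impossible along the compact positive-dimensional fibres over singular points, since psh functions are constant there.
\item \textbf{Step 3.} Near a singular point there is no holomorphic retraction of an ambient neighbourhood onto $X$. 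Composing $\varphi$ with a retraction onto a smooth stratum only sees the values of $\varphi$ on that stratum. The gluing lemma for maxima needs each local piece to be strictly dominated near the edge of its domain. That is exactly the control of $\varphi$ in ambient directions near $X^{\sing}$ that you do not have. Nothing in the sketch produces functions $h$ for which $\log|h|$ is adapted to $\varphi$ there.
\item \textbf{Internal consistency.} Pieces that agree with $\varphi+\epsilon\psi$ only up to $\epsilon$ cannot yield an exact ambient extension of $\varphi+\epsilon\psi$, as step 3 claims. In step 4, ambient extensions of a decreasing sequence on $X$ need not decrease on $W$ unless this is built into the construction.
\end{itemize}
Producing the ambient psh extension across the singular locus is precisely the deep content of \cite{fornaess1980levi}, and none of your steps supplies it. Citing it, as the paper does, is the right call.
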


\begin{theorem}[\cite{grauert1956plurisubharmonische}, {\cite[Theorem~1.7]{DemaillyJean-Pierre1985MdMe}}] \label{thm:extension_of_psh_across_analytic_subset}
    Let $X$ be a locally irreducible complex analytic space and $Y$ a nowhere dense analytic subset of $X$. If $\varphi$ is a psh function on $X \backslash Y$ and $\varphi$ is locally bounded above near $Y$, then $\varphi$ uniquely extends to a psh function on $X$.
\end{theorem}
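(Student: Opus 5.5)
The plan is to construct the extension by upper regularization, prove uniqueness from density, and verify plurisubharmonicity with the disc criterion of Theorem~\ref{thm:psh_iff_subharmonic_along_discs}; the step I expect to be hardest is the passage to limits along discs in the third paragraph.

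\emph{Construction and uniqueness.} Since $Y$ is nowhere dense, $X\backslash Y$ is dense in $X$. Define
$$
    \tilde\varphi(x) = \limsup_{z \to x,\ z \in X\backslash Y} \varphi(z), \quad x \in X.
$$
Because $\varphi$ is locally bounded above near $Y$, $\tilde\varphi$ takes values in $\Rninf$. It is upper semicontinuous by construction, and it agrees with $\varphi$ on $X\backslash Y$ by upper semicontinuity and the sub-mean value property there. For uniqueness, any psh $\psi$ on $X$ equals its own upper regularization off a nowhere dense analytic set. Near a smooth point this is the classical fact on manifolds. At a singular point, choose a local embedding $i\colon U_x\hookrightarrow W$ and $\Phi$ psh with $\psi=\Phi\circ i$. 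Then test along holomorphic discs through $x$ that meet $X\backslash Y$: the restriction of $\psi$ is subharmonic on such a disc, and a subharmonic function equals its $\limsup$ off a discrete set. Hence $\psi=\tilde\varphi$, which proves uniqueness. Local irreducibility ensures that $X\backslash Y$ is locally connected near $Y$. This excludes the phenomenon of two branches meeting at a point, where the $\limsup$ from one branch could be strictly larger than the value a psh function is forced to take on the other.

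\emph{Reduction to the normal case.} Since $X$ is locally irreducible, its normalization $\nu\colon X'\to X$ is a homeomorphism. The composite $\varphi\circ\nu$ is psh on $X'\backslash \nu^{-1}(Y)$, because psh pulls back under holomorphic maps by Theorem~\ref{thm:psh_iff_subharmonic_along_discs}. I would then argue that $\tilde\varphi$ is psh on $X$ if and only if $\tilde\varphi\circ\nu$ is psh on $X'$; the direction from $X'$ to $X$ is the delicate one. As a backup I would instead apply the Fornaess criterion directly on $X$ to $\tilde\varphi$, which avoids comparing psh on $X$ and $X'$.

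\emph{Verification via discs.} Let $f\colon\bbD\to X$ be holomorphic.
\begin{enumerate}
    \item If $f(\bbD)\not\subset Y$, then $f^{-1}(Y)$ is a discrete subset of $\bbD$. On its complement $\varphi\circ f$ is subharmonic, and it is locally bounded above near $f^{-1}(Y)$. Hence it extends across $f^{-1}(Y)$ to a subharmonic function $u$, namely its upper regularization. The task is to show $\tilde\varphi\circ f=u$. The inequality $\tilde\varphi\circ f\ge u$ is clear. For the reverse inequality at a point $t_0$ with $f(t_0)\in Y$, I would use a local finite branched covering $\pi\colon U\to B\subset\bbC^n$ with $U$ irreducible at $f(t_0)$. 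Pushing $\varphi$ forward by taking the maximum over the fibres of $\pi$ gives a psh function on $B$ off an analytic set. Applying the classical Grauert--Remmert extension on $B$ then controls $\limsup \varphi$ by values along nearby curves.
    \item If $f(\bbD)\subset Y$, I would approximate $f$ locally by discs $f_k$ not contained in $Y$; in an embedding this is possible by irreducibility of $X$ at each point. Subharmonicity then follows from the upper semicontinuity of $\tilde\varphi$ together with the sub-mean inequalities for $\tilde\varphi\circ f_k$.
\end{enumerate}
I expect the main obstacle to be the second case: showing that the $\limsup$ of the sub-mean inequalities along $f_k$ yields the inequality for $\tilde\varphi\circ f$. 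A limit of subharmonic functions gives only an inequality in the wrong direction unless one controls $\tilde\varphi$ by values on $X\backslash Y$ uniformly. If this approach fails, I would instead follow Demailly: pass to a resolution $\mu\colon\tilde X\to X$, extend $\varphi\circ\mu$ across $\mu^{-1}(Y)$ on the manifold $\tilde X$, and descend the result. Descent would use that the fibres of $\mu$ are connected, by Zariski's main theorem applied to the normal space $X'$, together with the identification of the extension with $\tilde\varphi\circ\mu$.
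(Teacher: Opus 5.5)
The paper gives no proof of this statement: it quotes Grauert--Remmert and Demailly, and uses the result only through the $\limsup$ formula in the proof of Lemma~\ref{lem:toric_psh_and_convex_functions}. Your construction of $\tilde\varphi$, its upper semicontinuity, and your uniqueness argument are fine. A psh function's value at $x$ is its $\limsup$ along any disc through $x$ not contained in $Y$, and such discs exist because $\dim_x Y<\dim_x X$.

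There is a genuine gap in existence. In Case~1, the inequality $\tilde\varphi(f(t_0))\le u(t_0)$ is the whole content of the theorem, and the fibrewise maximum cannot give it. With $\psi(w)=\max_{\pi(z)=w}\varphi(z)$ you have $\psi\circ\pi\circ f\ge\tilde\varphi\circ f$. So sub-mean value inequalities for the extension of $\psi$ only bound $\tilde\varphi(f(t_0))$ by values of $\varphi$ on the other sheets over $\pi(f(\bbD))$, not on your disc. That is the wrong direction. Nothing in the mechanism uses irreducibility, and it cannot detect failure. Take $X=\{z_1z_2=0\}$, $Y=\{0\}$, with $\varphi=0$ on one punctured axis and $\varphi=1$ on the other. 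The fibrewise maximum for $\pi=z_1+z_2$ is $\equiv 1$ and extends without complaint, yet $\tilde\varphi$ is not subharmonic on $t\mapsto(t,0)$.

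In Case~2 the difficulty sits at the centre, not on the circles. On the circles, reverse Fatou plus upper semicontinuity of $\tilde\varphi$ gives the correct inequality. At $t_0$, however, you need discs $f_k\to f$ whose centres $f_k(t_0)$ run through a sequence realizing $\tilde\varphi(f(t_0))$. ``By irreducibility'' does not produce them: lifting translated discs through a branched covering yields curves converging to all of $\pi^{-1}(\pi(f(\bbD)))$, not to the branch $f$. The comparison of plurisubharmonicity on $X$ and on $X'$ is also left unproved.

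Your fallback is the route that works, but two steps are missing. First, connected fibres of $\mu\colon\tilde X\to X$ do not by themselves make the psh extension $\Phi$ of $\varphi\circ\mu$ descend. Connectedness comes from Zariski's main theorem on $X'$ plus bijectivity of $\nu$, and this is exactly where local irreducibility enters. You also need $\Phi$ constant on each compact connected fibre, which follows from the maximum principle on each irreducible component. Properness then gives $\tilde\varphi(x)=\max_{\mu^{-1}(x)}\Phi$, hence $\Phi=\tilde\varphi\circ\mu$.

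Second, ``descend'' must be proved, since discs inside $\mu(\Exc(\mu))$ do not lift. Use Theorem~\ref{thm:psh_iff_subharmonic_along_discs} with a base change:
\begin{enumerate}
    \item Given $f\colon\bbD\to X$ and $t_0$, the fibre product $\bbD\times_X\tilde X$ is proper and surjective over $\bbD$. It therefore contains a curve mapping finitely onto a neighborhood of $t_0$.
    \item Normalizing and reparametrizing this curve gives $\tilde g$ with $\mu\circ\tilde g(s)=f(t_0+s^d)$.
    \item Then $\tilde\varphi\circ f(t_0+s^d)=\Phi\circ\tilde g(s)$ is subharmonic or $\equiv-\infty$.
    \item Hence $\tilde\varphi\circ f$ is subharmonic on a punctured neighborhood of $t_0$ and equals its $\limsup$ at $t_0$, so it is subharmonic (or $\equiv-\infty$) near $t_0$.
\end{enumerate}
This handles discs inside $Y$ or $X^{\sing}$ uniformly and removes the need for your case split.
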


In the above theorem, the extension $\tilde \varphi$ of $\varphi$ is given by
$$
\tilde \varphi (z) = \limsup_{\substack{w \to z \\ w \in X \backslash Y}} \varphi(w)
$$
for all $z \in X$. 

If a nonzero ideal sheaf $\mfa$ of $\cO_X$ is generated by $f_1,\ldots,f_s$ locally, we can associate to $\mfa$ a quasi-psh function locally defined by
$$
    \varphi_\mfa = \frac{1}{2}\log\bigl(|f_1|^2+\cdots+|f_s|^2\bigr) + \mathrm{O}(1).
$$
Changing the local generators changes this representative only by a locally bounded function. Thus $\varphi_\mfa$ is well-defined up to locally bounded functions, although it is only locally defined. By abuse of notation, we will use $\varphi_\mfa$ as if it were a quasi-psh function. For an effective $\bbQ$-Cartier divisor $B$, the associated quasi-psh function $\varphi_B$ is defined similarly.

\begin{definition} \label{def:toric_psh}
    Let $\sigma$ be a strongly convex rational polyhedral cone in $N_\bbR$. Denote by $K_N$ the maximal compact torus in $T_N$. A psh function $\varphi$ on $X_\sigma$ is said to be {\bf toric} if
    $$
        \varphi(k \cdot x) = \varphi(x)
    $$
    for all $k \in K_N$ and $x \in X_\sigma$.
\end{definition}

\section{Multiplier ideal sheaves on normal complex analytic spaces} \label{sec:multiplier_ideal_sheaves_on_normal_complex_analytic_spaces}

In this section, we recall the definition of the analytic multiplier ideal sheaf attached to a pair $(X,D)$ and develop its basic properties, in particular transformation formulas under modifications, coherence, and the strong openness property. Throughout this section, $(X, D)$ denotes a pair consisting of a normal complex analytic space $X$ of dimension $n$ and a Weil $\bbQ$-divisor $D$ on $X$ such that $K_X + D$ is $\bbQ$-Cartier. Denote by $\omega_X$ the dualizing sheaf of $X$.

A canonical divisor $K_X$ need not exist globally, but it can always be chosen locally. Taking a positive integer $q$ such that $q(K_X + D)$ is Cartier and writing $\omega_X^{[q]} := \big( \omega_X^{\otimes q} \big)^{**}$, we have
$$
\omega_X^{[q]}(qD) := \big( \omega_X^{[q]} \otimes \cO_X(qD) \big)^{**} \simeq \cO_X(q(K_X + D))
$$
as locally free sheaves of rank $1$. On a small open subset $V$ of $X$, there exists a local generator for $\omega_X^{[q]}(qD)$, say $\beta$. Following Eyssidieux--Guedj--Zeriahi \cite{eyssidieux2009singular}, we define the {\bf adapted volume form} $dV_{\beta}$ associated to $\beta$ by
$$
dV_\beta = \Big( \sqrt{-1}^{qn^2} \beta \wedge \bar \beta \Big)^{\frac{1}{q}}.
$$

Throughout this section, all measurable functions are understood to take values in $\bbR \cup \{ \pm \infty \}$ unless otherwise specified.

\begin{definition} \label{def:multiplier_ideal}
    Let $\varphi: X \to \overline \bbR$ be a measurable function that is locally integrable on $X^\reg$ with respect to any smooth volume form. The {\bf multiplier ideal sheaf} $\cI[(X,D);\varphi]$ of $\varphi$ for the pair $(X,D)$ is an $\cO_X$-submodule of $\cM_X$ defined stalk-wise by
    $$
    \cI [(X,D);\varphi]_x = \left\{ f \in \cM_{X,x} \,:\, \text{$\displaystyle \int_{U \cap X^\reg} |f|^2e^{-2\varphi} dV_\beta < \infty$ for a small neighborhood $U$ of $x$} \right\}.
    $$
\end{definition}

While the value of the integral in the definition of $\cI[(X,D);\varphi]$ depends on the choice of $q$ and of the local generator $\beta$, its finiteness does not. Therefore, $\cI[(X,D);\varphi]$ is well-defined. 

By definition, the multiplier ideal sheaf $\cI[(X,D);\varphi]$ is merely an $\cO_X$-submodule of $\cM_X$ and need not be an ideal sheaf of $\cO_X$ even if $\varphi$ is quasi-psh. When $D$ is not effective, the sections of $\cI[(X,D);\varphi]$ are allowed to have poles along the negative part of $D$.

This definition also appears in \cite[Definition~2.2]{li2022question}. Among the results of this section, the statements corresponding to Proposition~\ref{prop:effective_imply_ideal} and Proposition~\ref{prop:transformation_formula} are recorded there for quasi-psh functions (see \cite[Remark~2.3 and Proposition~2.4]{li2022question}). We include complete proofs, and prove the former for measurable functions that are merely locally bounded above.

Recall that, if $X$ is a normal algebraic variety and $D$ is a Weil $\bbQ$-divisor on $X$ such that $K_X + D$ is $\bbQ$-Cartier, the (algebraic) multiplier ideal sheaf of a nonzero ideal sheaf $\mfa \subset \cO_X$ associated to $c > 0$ for the pair $(X,D)$ is defined by
$$
    \cJ[(X,D);\mfa^c] = \mu_* \cO_Y \Big(K_Y - \big\lfloor \frac{1}{q}\mu^*(q(K_X+D)) + cF \big\rfloor \Big) \subset \cM_X,
$$
where $q$ is a positive integer such that $q(K_X + D)$ is Cartier and $\mu:Y\to X$ is a simultaneous log resolution of $D$ and $\mfa$ with $\mfa\cdot\cO_Y=\cO_Y(-F)$ for some effective divisor $F$ on $Y$.

\begin{theorem}[cf. {\cite[Proposition~4.3.6]{an2021Thesis}}, {\cite[Remark~2.5]{li2022question}}] \label{thm:analytic_mis_extend_algebraic_mis}
    Let $\mfa$ be a nonzero ideal sheaf on a normal algebraic variety $X$ and let $D$ be a Weil $\bbQ$-divisor on $X$ such that $K_X + D$ is $\bbQ$-Cartier. If $\varphi_\mfa$ is a quasi-psh function on $X$ associated to $\mfa$ and $c$ is a positive real number, then
    $$
    \cI[(X,D);c\varphi_{\mfa}] = \cJ[(X,D);\mfa^c]^{\mathrm{an}}.
    $$
\end{theorem}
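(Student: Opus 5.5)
The plan is to compare both sides on a log resolution $\mu\colon Y\to X$ of $(X,D)$ and $\mfa$, chosen so that $\mfa\cdot\cO_Y=\cO_Y(-F)$ and $\Exc(\mu)\cup\Supp(\mu^{-1}_*D)\cup\Supp F$ is a simple normal crossings divisor. The question is local on $X$, so fix a small Stein open set $V$ on which $q(K_X+D)$ has a generator $\beta$, where $q$ is a positive integer with $q(K_X+D)$ Cartier. Write
$$
    K_Y-\tfrac{1}{q}\mu^*\bigl(q(K_X+D)\bigr)=\sum_i b_i E_i ,
$$
as divisors with $\bbQ$-coefficients, where the $E_i$ run over the exceptional divisors and the strict transforms of the components of $D$. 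Pulling back, $\mu^*dV_\beta$ is then locally, in snc coordinates $w$ on $Y$, comparable to
$$
    \prod_i |w_i|^{-2b_i}\,dV_Y
$$
for a smooth volume form $dV_Y$. Similarly $e^{-2c\varphi_\mfa\circ\mu}$ is comparable to $\prod_i |w_i|^{-2c f_i}$, where $F=\sum_i f_iE_i$.

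Take $f\in\cM_{X,x}$. Since $\mu$ is an isomorphism outside a nowhere dense analytic set and $X^\reg$ has full measure in $X$, we get
$$
    \int_{U\cap X^\reg}|f|^2e^{-2c\varphi_\mfa}\,dV_\beta=\int_{\mu^{-1}(U)}|\mu^*f|^2\prod_i|w_i|^{-2(b_i+cf_i)}\,dV_Y .
$$
Because $\mu^{-1}(U)$ is a neighborhood of the compact fibre $\mu^{-1}(x)$, this integral is finite for small $U$ if and only if it is finite near every point of that fibre. Locally $\mu^*f=u\prod_i w_i^{e_i}$ with $u$ a unit, where $e_i=\ord_{E_i}(\mu^*f)$. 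By Fubini, finiteness is then equivalent to
$$
    e_i-b_i-cf_i>-1 \quad\text{for all } i ,
$$
that is, $\divisor(\mu^*f)+K_Y-\lfloor \tfrac1q\mu^*(q(K_X+D))+cF\rfloor\ge 0$ near $\mu^{-1}(x)$, using that the $e_i$ are integers. The integrality of the $e_i$ is what allows rounding down.

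This condition says exactly that $\mu^*f$ is a section of $\cO_Y(K_Y-\lfloor\cdots\rfloor)$ near $\mu^{-1}(x)$. Here $K_Y$ is normalized by requiring $\mu_*K_Y=K_X$ on $V$, consistently with the $K_X$ chosen for $\beta$. Since $\mu_*$ of meromorphic functions recovers $\cM_X$, this is the same as $f\in\bigl(\mu_*\cO_Y(\ldots)\bigr)^{\mathrm{an}}_x$. By GAGA-type compatibility of $\mu_*$ with analytification for a proper algebraic $\mu$, this sheaf is $\cJ[(X,D);\mfa^c]^{\mathrm{an}}_x$.

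The expected main obstacle is bookkeeping rather than analysis. One must check the local comparison $\mu^*dV_\beta\asymp\prod_i|w_i|^{-2b_i}dV_Y$ carefully: $\mu^*\beta$ is a meromorphic section of $\omega_Y^{\otimes q}$ with divisor $q\sum_i b_iE_i$, shifted by the chosen $K_Y$. One must also make sure that allowing non-effective $D$, which puts poles on $f$, causes no trouble. It does not, because everything is phrased for meromorphic $f$ and the order condition is linear in the $e_i$. Finally, one needs that the local generators of $\mfa$ give $\varphi_\mfa\circ\mu=\log|s_F|+\mathrm{O}(1)$, which follows from principalization.
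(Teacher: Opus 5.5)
Your route is the paper's: pass to a log resolution, change variables, compute in snc coordinates, and use properness. You also invoke relative GAGA, which the paper leaves implicit. But two steps fail as written.

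\textbf{The sign in the volume comparison.} With $\sum_i b_iE_i=K_Y-\frac1q\mu^*\bigl(q(K_X+D)\bigr)$, the form $\mu^*\beta$ has divisor $q\sum_i b_iE_i$, as you note yourself at the end. So $\mu^*dV_\beta$ is comparable to $\prod_i|w_i|^{+2b_i}\,dV_Y$, not $\prod_i|w_i|^{-2b_i}\,dV_Y$. Check this on the blow-up of the origin in $\bbC^2$ with $D=0$. There $b_E=1$ and $\mu^*(dz_1\wedge dz_2)=w_1\,dw_1\wedge dw_2$, so the density is $|w_1|^{2}$. The correct integrability condition is therefore $e_i+b_i-cf_i>-1$. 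Your inequality $e_i-b_i-cf_i>-1$ is not equivalent to the divisorial condition you write next. In the example with $f=1$ and $\mathfrak{m}\cdot\cO_Y=\cO_Y(-E)$, your inequality excludes $1$ for every $c>0$. The divisorial condition, correctly, gives $c<2$. Your final condition is right only because that step silently uses the opposite sign.

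\textbf{The factorization of $\mu^*f$.} The claim $\mu^*f=u\prod_i w_i^{e_i}$ with $u$ a unit is false for a general meromorphic germ. The resolution $\mu$ is adapted to $D$ and $\mfa$, not to $f$. In general $\mu^*f=g\prod_i w_i^{e_i}$, where $g$ can still vanish, or have poles, along divisors through $y$ that are not among the $E_i$. An example is $f=1/\ell$ with $\{\ell=0\}\not\subset\Supp D$. Your Fubini computation over the $E_i$ says nothing about such poles, yet they are exactly the components of $\divisor(\mu^*f)$ off $E$ in your final condition. The divergence direction also needs an argument once $g$ is not a unit.

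\textbf{The repair.} The paper avoids the problem by absorbing the rounding into the function. Set $h:=(\mu^*f)\prod_i w_i^{\lceil b_i-cf_i\rceil}$. The integrand becomes $|h|^2\prod_i|w_i|^{2(b_i-cf_i-\lceil b_i-cf_i\rceil)}$, and each exponent $b_i-cf_i-\lceil b_i-cf_i\rceil$ lies in $(-1,0]$. So the weight is locally integrable and bounded below by a positive constant. Weighted integrability then forces $h\in L^2_{\loc}$. A meromorphic $L^2_{\loc}$ function is holomorphic, since at a generic point of a polar divisor one gets a non-integrable $|z_1|^{-2k}$. Conversely, a holomorphic $h$ is locally bounded, hence weighted-integrable. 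Since $\prod_i w_i^{-\lceil b_i-cf_i\rceil}$ generates $\cO_Y\bigl(K_Y-\lfloor\frac1q\mu^*(q(K_X+D))+cF\rfloor\bigr)$, integrability near $y$ is equivalent to $\mu^*f$ being a section of this sheaf there, for arbitrary meromorphic $f$. With these two repairs, the rest of your argument (compact fibres, $\mu_*\cM_Y=\cM_X$, comparison with analytification) goes through as in the paper.
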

\begin{proof}
    Let $\mu: Y \to X$ be a log resolution of $D$ and $\mfa$ such that
    $$
        \mfa \cdot \cO_Y = \cO_Y(-F)
    $$
    for an effective divisor $F$ on $Y$. The assertion is local on $X$. Fix $x\in X$ and shrink to an open neighborhood $U$ on which $\omega_X^{[q]}(qD)$ has a generator $\beta$. Set
    $$
        G := \frac{1}{q} \mu^* \bigl(q(K_X + D)\bigr), \quad A := K_Y-G.
    $$
    As a meromorphic $q$-canonical form, $\mu^* \beta$ satisfies
    $$
        \frac{1}{q} \divisor_Y (\mu^* \beta) = A
    $$
    on $\mu^{-1}(U)$. For $y \in \mu^{-1}(U)$, choose local coordinates $z_1,\ldots,z_n$ centered at $y$ such that the prime divisors appearing in $\Supp(F)\cup\Supp(A)$ and passing through $y$ are $E_i = (z_i = 0)$. Write
    $$
        A=\sum_i a_i E_i,\quad F = \sum_i b_i E_i.
    $$
    Since $\mfa\cdot\cO_Y=\cO_Y(-F)$, the pull-backs of the local generators of $\mfa$ have the common monomial factor $\prod_i z_i^{b_i}$ and no further common zero. Hence
    $$
        \varphi_\mfa \circ \mu = \sum_i b_i \log |z_i| + O(1).
    $$
    The divisor identity for $\mu^*\beta$ gives
    $$
        \mu^* dV_\beta \approx \prod_i|z_i|^{2 a_i}\,d\lambda,
    $$
    where $d\lambda$ is a smooth positive volume form and $\approx$ means both sides are comparable to each other by a positive bounded function with strictly positive lower bound.
    
    Let $f \in \cM_{X,x}$ be the germ of a meromorphic function at $x$ and define the meromorphic function
    $$
        h := (f\circ\mu) \prod_i z_i^{\lceil a_i-cb_i \rceil}.
    $$
    After the change of variables, the integrand with respect to $d\lambda$ is comparable to
    $$
        |h|^2 \prod_i |z_i|^{2(a_i-cb_i-\lceil a_i-cb_i \rceil)}.
    $$
    For every $i$, we have $-1 < a_i-cb_i-\lceil a_i-cb_i \rceil \le 0$. Hence the weight function
    $$
        w := \prod_i |z_i|^{2(a_i-cb_i-\lceil a_i-cb_i \rceil)}
    $$
    is locally integrable and bounded below by a positive constant on a sufficiently small polydisk.
    If $h$ is locally square integrable with weight $w$, then $h \in L^2_{\loc}$ and hence $h$ is holomorphic.

    Conversely, if $h$ is holomorphic, it is bounded on a smaller polydisk and is therefore locally square integrable with weight $w$.

    Since $\prod_i z_i^{-\lceil a_i-cb_i \rceil}$ is a local generator of $\cO_Y(\lceil A-cF \rceil)$, the local square integrability of $h$ with weight $w$ near $y$ is therefore equivalent to
    $$
        (f \circ \mu)_y \in \cO_Y \big( \lceil A - cF \rceil \big)_y
        = \cO_Y \Big(K_Y - \big\lfloor \frac{1}{q} \mu^*(q(K_X+D)) + cF \big\rfloor \Big)_y.
    $$
    The properness of $\mu$ gives
    $$
        \cI[(X,D);c\varphi_{\mfa}] = \cJ[(X,D);\mfa^c]^{\mathrm{an}}.
    $$
\end{proof}

In the algebraic setting, the multiplier ideal sheaf $\cJ[(X,D);B]$ is an ideal sheaf of $\cO_X$ if $D$ and $B$ are effective divisors. We extend this fact to the analytic multiplier ideal sheaf $\cI[(X,D);\varphi]$ for effective $D$ and any $\varphi \in L_{\loc}^1(X^\reg)$ that is locally bounded above, in particular for quasi-psh $\varphi$.

\begin{proposition} \label{prop:effective_imply_ideal}
    Suppose that $\varphi \in L_{\loc}^1(X^\reg)$ is locally bounded above on $X$ and that $D$ is effective. Then, $\cI[(X,D);\varphi]$ is an ideal sheaf of $\cO_X$, that is,
    $$
        \cI[(X,D);\varphi] \subset \cO_X.
    $$
\end{proposition}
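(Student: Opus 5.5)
Since $\varphi$ is locally bounded above, near any $x\in X$ we have $e^{-2\varphi}\ge c>0$. Hence any $f\in\cI[(X,D);\varphi]_x$ also satisfies $\int_{U\cap X^\reg}|f|^2\,dV_\beta<\infty$, that is, $f\in\cI[(X,D);0]_x$. It therefore suffices to prove $\cI[(X,D);0]\subset\cO_X$ when $D$ is effective. Because $X$ is normal, a meromorphic germ that is holomorphic on $X^\reg$ near $x$ is holomorphic at $x$: the singular locus has codimension $\ge 2$, and we can use Riemann extension on normal spaces, or Hartogs for $\cO_X$ reflexive. The plan is to show that such an $f$ is holomorphic on $U\cap X^\reg$.

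\textbf{Reduction to smooth points.} Work at a point $y\in X^\reg$. There $\omega_X$ is locally free, generated by $dz_1\wedge\cdots\wedge dz_n$, and $qD$ is a Cartier divisor given locally by $g=0$ with $g$ holomorphic, since $D$ is effective. A generator of $\omega_X^{[q]}(qD)$ near $y$ is then $\beta=g^{-1}(dz)^{\otimes q}$, so
$$
dV_\beta=|g|^{-2/q}\,d\lambda
$$
with $d\lambda$ the Lebesgue form. Since $g$ is bounded near $y$, $|g|^{-2/q}\ge c'>0$ on a small neighborhood. Hence $\int|f|^2\,d\lambda<\infty$ near $y$, with $f$ meromorphic.

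\textbf{Key step: an $L^2_{\loc}$ meromorphic function on a manifold is holomorphic.} Write $f=h/k$ with $h,k$ holomorphic and coprime at $y$ (UFD). If $k(y)=0$, take a generic point of the hypersurface $\{k=0\}$ where $h\neq0$ and $k$ is smooth; coprimality makes such points dense in $\{k=0\}$. In local coordinates there, $k=w_1^{m}u$ with $m\ge1$ and $u$ a unit, and $h$ is a unit. Then $|f|^2\approx|w_1|^{-2m}$, which is not integrable in $w_1$ since $m\ge1$. This contradiction gives $k(y)\neq0$, so $f$ is holomorphic at $y$. This is the step requiring the most care, in choosing good points on the polar divisor; the argument is the same one used at the end of the proof of Theorem~\ref{thm:analytic_mis_extend_algebraic_mis}.

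\textbf{Conclusion.} The germ $f$ is holomorphic on $U\cap X^\reg$, and $X^\sing$ has codimension $\ge2$ by normality. By the Riemann extension theorem on normal spaces, $f$ extends holomorphically across $X^\sing$. Since it agrees with the original meromorphic germ on a dense open set, $f\in\cO_{X,x}$. Hence $\cI[(X,D);\varphi]\subset\cO_X$, and it is an $\cO_X$-submodule, i.e.\ an ideal sheaf.
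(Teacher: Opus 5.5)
Your proof is correct and is essentially the paper's argument: both bound $e^{-2\varphi}$ and $dV_\beta$ from below at smooth points, using that $D$ is effective, and then get a contradiction from the non-integrability of $|z_1|^{-2k}$ at a generic point of the polar divisor. The only difference is where normality enters. The paper uses it to place a generic point of the pure-codimension-one polar set in $X^\reg$, whereas you first prove holomorphy on $X^\reg$ and then extend across the singular locus, which has codimension $\ge 2$.
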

\begin{proof}
    Since the statement is local, suppose that, for a small open set $U$, a section $f$ in $\Gamma\big(U, \cI[(X,D);\varphi]\big)$ is not holomorphic and denote by $P$ the polar set of $f$ in $U$. The normality of $X$ guarantees that $P$ is a nonempty analytic subset of pure codimension $1$. Choose a point $x \in P \cap U$ that is a smooth point of both $X$ and $P$, and assume that $x \in (\Supp D)^\reg$ if $x \in \Supp D$. Choose local coordinates $(z_1,\ldots, z_n)$ centered at $x$ on $U$ with $P \cap U = \{ z \in U \,:\, z_1 = 0\}$ after shrinking $U$ if necessary. Set $k := -\ord_{P}(f)$. Then $g := z_1^k f$ belongs to $\cO_{X}(U)$ and does not vanish generically on $P$. Moreover, by moving the point $x$ slightly, we can assume that $g(x) \neq 0$.

    On a small relatively compact open neighborhood $W$ of $x$, write
    $$
        D = d_1 D_1,
    $$
    where $d_1 \ge 0$ and $D_1$ is a prime divisor having local equation $g_1$ with $\sup_{W}|g_1| \le C$ for some constant $C$. We may assume that $W \subset X^\reg$ and thus the meromorphic $q$-canonical form
    $$
        \beta:= g_1^{-qd_1} (dz_1 \wedge \cdots \wedge dz_n)^{\otimes q}
    $$
    generates $\omega_X^{[q]}(qD)$ on $W$. From the inequality $dV_\beta \ge 2^n C^{-2d_1} d\lambda$, we have
    $$
        \int_{D(0,\epsilon)^n} |f|^2 e^{-2\varphi}dV_\beta \ge 2^n C^{-2d_1} \int_{D(0,\epsilon)^n} |z_1^{-k}g|^2 e^{-2\varphi} d\lambda \ge C' \int_{D(0,\epsilon)} |z_1|^{-2k} d\lambda(z_1) = +\infty
    $$
    for some $C' > 0$, where $D(0,\epsilon) \subset \bbC$ is the open disk of radius $\epsilon$ centered at $0$ and $\epsilon > 0$ is taken to be sufficiently small so that the polydisk $D(0,\epsilon)^n$ is contained in $W$. Here the second inequality follows from the fact that $\varphi$ is locally bounded above and $g(x) \neq 0$, and the last equality follows from $k \ge 1$. This contradicts the assumption $f \in \cI[(X,D);\varphi]_{x}$.
\end{proof}

Just as the multiplier ideal sheaf of a quasi-psh function on a complex manifold is coherent, so is the multiplier ideal sheaf $\cI[(X,D);\varphi]$ for a quasi-psh function $\varphi$. For this, we start with the following lemma.

\begin{lemma} \label{lem:local_formula_for_snc}
   Let $\mu : Y \to X$ be a proper bimeromorphic morphism from a complex manifold $Y$ such that $\mu$ is biholomorphic over $X^\reg \backslash \Supp D$ and
    $$
        E := \Exc(\mu) \cup \mu_*^{-1} (\Supp D)
    $$
    is a divisor with simple normal crossings. Let $q$ be a positive integer such that $q(K_X+D)$ is Cartier and set
    $$
        A := K_Y - \frac{1}{q}\mu^* \big( q(K_X+D) \big),
    $$
    which is a $\bbQ$-divisor supported on $E$.  Let $\beta$ be a generator of $\omega_X^{[q]}(qD)$ on an open subset $U$ of $X$. For $y \in \mu^{-1}(U)$, choose local coordinates $z_1,\ldots,z_n$ centered at $y$ such that the components of $E$ passing through $y$ are $E_i = (z_i = 0)$, and write $A = \sum_i a_i E_i$ near $y$. Then
        $$
            \mu^* dV_\beta = e^{2\eta} \prod |z_i|^{2a_i} \, d\lambda
        $$
        near $y$, where $d\lambda$ is the Lebesgue measure of the coordinates and $\eta$ is a bounded function.
\end{lemma}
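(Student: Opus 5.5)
The plan is to compare $\mu^*\beta$, a meromorphic section of $\omega_Y^{\otimes q}$, with the local frame $(dz_1\wedge\cdots\wedge dz_n)^{\otimes q}$ of $\omega_Y^{\otimes q}$ near $y$. We work on $\mu^{-1}(U)\setminus E$. There $\mu$ is biholomorphic onto its image, which lies in $X^\reg\setminus\Supp D$. On that set $\beta$ is a nowhere vanishing holomorphic section of $\omega_X^{\otimes q}$, so $\mu^*\beta$ is a well-defined nowhere vanishing holomorphic $q$-canonical form on $\mu^{-1}(U)\setminus E$. Hence near $y$ we can write
$$
    \mu^*\beta = h\,(dz_1\wedge\cdots\wedge dz_n)^{\otimes q}
$$
with $h$ holomorphic and nowhere zero off $E$.

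The key step is to identify the divisor of $h$, and I expect this to be the main point requiring care. Since $\beta$ generates $\cO_X(q(K_X+D))$ on $U$, the divisor of $\beta$, viewed as a rational $q$-canonical form on $X$, is $q(K_X+D)|_U$ for the chosen $K_X$. The definition of $\mu^*$ of a Cartier divisor then gives
$$
    \divisor_Y(\mu^*\beta) = qK_Y - \mu^*\bigl(q(K_X+D)\bigr) = qA,
$$
where $K_Y$ is the canonical divisor compatible with $K_X$, namely $\mu_*K_Y=K_X$. To justify this, write $\beta$ locally as $g\cdot\omega_0$ with $\omega_0$ a rational $q$-form. Pulling back, the divisor of $\mu^*\omega_0$ is $qK_Y$ and the divisor of $\mu^*g$ is $\mu^*\divisor g$. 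In particular $h$ is meromorphic on the whole coordinate chart, with divisor $qA=\sum_i qa_i E_i$ there. Thus $h=u\prod_i z_i^{qa_i}$ with $u$ holomorphic and nonvanishing near $y$. Here $qa_i\in\bbZ$, because $qA$ is an integral divisor.

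Next we compute directly from the definition of the adapted volume form:
$$
    \mu^*dV_\beta = \Bigl(\sqrt{-1}^{qn^2}|h|^2 (dz\wedge d\bar z)^{\otimes q}\Bigr)^{1/q} = |h|^{2/q}\, c_n\, d\lambda,
$$
with $c_n=2^n$ a positive constant. Using the factorization of $h$, this becomes $|u|^{2/q}\prod_i|z_i|^{2a_i}\,c_n\,d\lambda$. Set $\eta=\frac1q\log|u|+\frac12\log c_n$. On a relatively compact neighborhood of $y$, $u$ is bounded above and away from zero, so $\eta$ is bounded (indeed smooth).

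A minor point is that the identity holds off $E$, which has Lebesgue measure zero, so it holds as an identity of measures. The only genuine input is the divisor identity $\divisor(\mu^*\beta)=qA$. This follows from the definition of $A$ and the fact that $q(K_X+D)$ is Cartier, so $\mu^*$ of it makes sense.
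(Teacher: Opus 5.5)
Your proposal is correct and follows essentially the same route as the paper: pull $\beta$ back to a meromorphic $q$-canonical form on $\mu^{-1}(U)$, identify $\divisor_Y(\mu^*\beta)=qA$, write $\mu^*\beta=u\prod_i z_i^{qa_i}(dz_1\wedge\cdots\wedge dz_n)^{\otimes q}$ with $u$ a unit, and take the $1/q$-power to get $e^{2\eta}=2^n|u|^{2/q}$. There is one sign slip to fix: as a rational $q$-canonical form, a generator $\beta$ of $\omega_X^{[q]}(qD)$ has divisor $-qD$ on $U\cap X^\reg$, not $q(K_X+D)$; equivalently, if you write $\beta=g\,\omega_0$ with $\divisor(\omega_0)=qK_X$, then $\divisor(g)=-q(K_X+D)$, which is Cartier, and this is what makes $\mu^*\divisor(g)$ meaningful and gives your (correct) identity $\divisor_Y(\mu^*\beta)=qK_Y-\mu^*\bigl(q(K_X+D)\bigr)$.
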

\begin{proof}
    As a meromorphic $q$-canonical form, the generator $\beta$ has divisor $-qD$ on $U \cap X^\reg$. By the local parametrization theorem, the pull-back $\mu^* \beta$ can be considered as a meromorphic $q$-canonical form on $\mu^{-1}(U)$ and it is a generator of $\mu^*\big(  \omega_X^{[q]}(qD) \big) = \omega_Y^{\otimes q} \otimes \cO_Y(-qA)$. Since $q(K_X+D)$ is Cartier, the divisor $qA$ is integral and supported on $E$. Thus, in the chosen coordinates,
    $$
        \mu^*\beta = w \cdot \Big( \prod_i z_i^{q a_i} \Big) \, ( dz_1 \wedge \cdots \wedge dz_n )^{\otimes q}
    $$
    for a nowhere vanishing holomorphic function $w$ near $y$. This yields
    $$
        \mu^*dV_\beta = \left( \sqrt{-1}^{qn^2} \mu^*\beta \wedge \overline{\mu^*\beta} \right)^{\frac{1}{q}} = e^{2\eta} \prod |z_i|^{2a_i} \, d\lambda
    $$
    with $e^{2\eta} = 2^n |w|^{2/q}$.
\end{proof}

With the above lemma, we can prove transformation formulas for multiplier ideal sheaves under modifications. These formulas are analytic counterparts of the transformation rule for algebraic multiplier ideal sheaves \cite[Proposition~9.3.62]{lazarsfeld2004positivity2}.

\begin{proposition}[cf. {\cite[Proposition~2.4]{li2022question}}] \label{prop:transformation_formula}
    Let $\mu:Y \to X$, $E$, $A$, and $q$ be as in Lemma~\ref{lem:local_formula_for_snc} and let $\varphi$ be a quasi-psh function on $X$.
    Then the following hold.
    \begin{enumerate}
        \item Let $\tilde\varphi := \varphi \circ \mu + \varphi_{\lceil A \rceil - A}$. Then $\tilde\varphi$ is quasi-psh on $Y$ and
        $$
            \cI [(X,D);\varphi] = \mu_*\Big( \cO_Y \big( \lceil A \rceil \big) \otimes \cI[(Y,0);\tilde\varphi] \Big).
        $$
        \item Let $D_Y := -A$, so that $(Y,D_Y)$ is a pair with $q(K_Y + D_Y) = \mu^*\big(q(K_X+D)\big)$. Then
        $$
            \cI[(X,D);\varphi] = \mu_*\cI[(Y,D_Y);\varphi\circ\mu].
        $$
    \end{enumerate}
\end{proposition}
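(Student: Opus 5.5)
The plan is to reduce both formulas to a local change of variables along $\mu$, using Lemma~\ref{lem:local_formula_for_snc} for the comparison of volume forms. Since $\mu$ is proper and biholomorphic over $X^\reg\setminus \Supp D$, and $\mu^{-1}(X^\reg\setminus\Supp D)$ has complement of measure zero in $Y$ (its complement is contained in $E$, an analytic subset of positive codimension), integrals over $U\cap X^\reg$ transform exactly to integrals over $\mu^{-1}(U)$. Concretely, for $f\in\cM_{X,x}$ and a small neighborhood $U$ of $x$ with generator $\beta$, the change of variables gives
$$
\int_{U\cap X^\reg}|f|^2e^{-2\varphi}\,dV_\beta=\int_{\mu^{-1}(U)}|f\circ\mu|^2e^{-2\varphi\circ\mu}\,\mu^*dV_\beta .
$$
Properness lets us cover $\mu^{-1}(\overline U)$ by finitely many coordinate charts as in the lemma. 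Hence finiteness for some small $U\ni x$ is equivalent to local finiteness at every point $y\in\mu^{-1}(x)$. This equivalence is exactly the statement that $f$ lies in the stalk of $\mu_*(\cdots)$ at $x$: germs of meromorphic functions on $\mu^{-1}(U)$ for shrinking $U$ descend to $\cM_X$ because $\mu$ is bimeromorphic and $X$ is normal, so $\mu_*\cM_Y=\cM_X$.

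\textbf{Part (2).} By definition $q(K_Y+D_Y)=qK_Y-qA=\mu^*(q(K_X+D))$, which is Cartier. Moreover $\mu^*\beta$ is a generator of $\omega_Y^{\otimes q}(qD_Y)=\omega_Y^{\otimes q}\otimes\cO_Y(-qA)$, as shown in the proof of Lemma~\ref{lem:local_formula_for_snc}. So $dV_{\mu^*\beta}=\mu^*dV_\beta$ is an adapted volume form for $(Y,D_Y)$, and the displayed identity is literally the defining integral for $\cI[(Y,D_Y);\varphi\circ\mu]$. Since $\varphi\circ\mu$ is quasi-psh and hence locally integrable on $Y$, this gives (2) immediately.

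\textbf{Part (1).} Work near $y$ in the coordinates of the lemma. There $\mu^*dV_\beta\approx\prod_i|z_i|^{2a_i}d\lambda$, and $\varphi_{\lceil A\rceil-A}=\sum_i(\lceil a_i\rceil-a_i)\log|z_i|+O(1)$. This function is psh up to a bounded term, since $\lceil A\rceil-A$ is effective; hence $\tilde\varphi$ is quasi-psh. Write $g:=(f\circ\mu)\prod_i z_i^{\lceil a_i\rceil}$. Then
$$
|f\circ\mu|^2e^{-2\varphi\circ\mu}\prod_i|z_i|^{2a_i}=|g|^2e^{-2\varphi\circ\mu}\prod_i|z_i|^{-2(\lceil a_i\rceil-a_i)}\approx|g|^2e^{-2\tilde\varphi}.
$$
So local integrability at $y$ is equivalent to $g\in\cI[(Y,0);\tilde\varphi]_y$ as a meromorphic germ. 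Since $\tilde\varphi$ is quasi-psh, hence locally bounded above, Proposition~\ref{prop:effective_imply_ideal} applied with $D=0$ forces $g\in\cO_{Y,y}$. Therefore the condition is equivalent to
$$
f\circ\mu\in\prod_i z_i^{-\lceil a_i\rceil}\cdot\cI[(Y,0);\tilde\varphi]_y=\bigl(\cO_Y(\lceil A\rceil)\otimes\cI[(Y,0);\tilde\varphi]\bigr)_y .
$$
Here we may identify the tensor product with a product inside $\cM_Y$, because $\cO_Y(\lceil A\rceil)$ is invertible. Pushing forward then gives (1).

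\textbf{Main obstacle.} The substantive point is the descent step. One must check that a meromorphic section on $\mu^{-1}(U)$ comes from $\cM_X(U)$, and that the finiteness condition over the whole fiber $\mu^{-1}(x)$ matches the single neighborhood $U$ in Definition~\ref{def:multiplier_ideal}. I would handle the first via normality: a meromorphic function on $\mu^{-1}(U)$ restricts to one on the dense open set where $\mu$ is an isomorphism, and it extends meromorphically across the thin set by Levi-type extension, or more simply by Hartogs for locally bounded quotients. I would handle the second via compactness of $\mu^{-1}(\overline U)$ for $U\Subset X$ together with shrinking.
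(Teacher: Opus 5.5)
Your proposal is correct and follows essentially the same route as the paper. For (1), both use the change of variables with Lemma~\ref{lem:local_formula_for_snc}, absorb $\prod_i z_i^{\lceil a_i\rceil}$ into the germ, and invoke Proposition~\ref{prop:effective_imply_ideal} on $(Y,0)$ to force holomorphicity. For (2), both identify $\mu^*\beta$ as a generator for $(Y,D_Y)$ with $dV_{\mu^*\beta}=\mu^*dV_\beta$, and both conclude via compactness of the fibers of $\mu$ and $\mu_*\cM_Y=\cM_X$. The only difference is that you sketch a justification of $\mu_*\cM_Y=\cM_X$, which the paper uses without comment as a standard fact.
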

\begin{proof}
    (1) Let $U$ and $y$ be as in Lemma~\ref{lem:local_formula_for_snc}. Write $\epsilon_i := \lceil a_i \rceil - a_i \in [0,1)$, so that $\lceil A \rceil - A = \sum_i \epsilon_i E_i$ is effective. Thus, near $y$, we have
    $$
        \tilde \varphi = \varphi\circ\mu + \sum \epsilon_i \log|z_i| + \mathrm{O}(1),
    $$
    which is quasi-psh.

    Let $\beta$ be the generator of $\omega_X^{[q]}(qD)$ on $U$ as in Lemma~\ref{lem:local_formula_for_snc}. Since $\mu$ is biholomorphic over $X^\reg \backslash \Supp D$ and since $E$ and $X^{\sing} \cup \Supp D$ have Lebesgue measure zero, a change of variables gives
    $$
        \int_{U \cap X^\reg} |f|^2 e^{-2\varphi} dV_\beta = \int_{\mu^{-1}(U)} |f\circ\mu|^2 e^{-2\varphi\circ\mu} \mu^* dV_\beta
    $$
    for every $f \in \cM_X(U)$. By Lemma~\ref{lem:local_formula_for_snc}, up to a bounded positive factor, the integrand on the right-hand side is
    $$
        |f\circ \mu|^2 \prod |z_i|^{2a_i} e^{-2\varphi\circ\mu} = \Big| (f\circ\mu) \prod z_i^{\lceil a_i \rceil} \Big|^2 e^{-2\tilde\varphi}.
    $$
    By Proposition~\ref{prop:effective_imply_ideal}, a meromorphic function that is square integrable with weight $e^{-2\tilde\varphi}$ must be holomorphic because $\tilde\varphi$ is locally bounded above. Since $\prod_i z_i^{-\lceil a_i \rceil}$ is a local generator of $\cO_Y(\lceil A \rceil)$, the finiteness of the last integral near $y$ is therefore equivalent to
    $$
        (f\circ\mu)_y \in \Big( \cO_Y \big( \lceil A \rceil \big) \otimes \cI[(Y,0); \tilde\varphi] \Big)_y.
    $$
    As $\mu$ is proper, the fiber $\mu^{-1}(x)$ is compact for each $x \in X$, so the above holds for every $y \in \mu^{-1}(x)$ if and only if the integral converges on $\mu^{-1}(U')$ for a sufficiently small neighborhood $U'$ of $x$. Hence, the assertion follows from $\mu_*\cM_Y = \cM_X$.

    (2) Since $\mu^*\beta$ satisfies $\divisor_Y(\mu^*\beta) = qA = -qD_Y$ as a meromorphic $q$-canonical form, it is a generator of $\omega_Y^{\otimes q} \otimes \cO_Y(qD_Y)$ on $\mu^{-1}(U)$, and $dV_{\mu^*\beta} = \mu^* dV_\beta$. Hence, the stalk of $\cI[(Y,D_Y); \varphi\circ\mu]$ at $y$ consists exactly of the germs $h \in \cM_{Y,y}$ such that $|h|^2 e^{-2\varphi\circ\mu} \mu^* dV_\beta$ is integrable near $y$. The argument is completed as in (1).
\end{proof}

The sheaf $\cI[(Y,0);\tilde\varphi]$ appearing in Proposition~\ref{prop:transformation_formula} is exactly the multiplier ideal sheaf of $\tilde\varphi$ on the complex manifold $Y$, in the sense of Nadel \cite{nadel1990multiplier}.

In addition to the transformation formulas, it is useful to compare multiplier ideal sheaves associated to different pairs whose divisors differ by an integral Cartier divisor $H$. The following lemma shows that removing $H$ from the divisor of a pair twists the multiplier ideal sheaf by a local equation of $H$.

\begin{lemma} \label{lem:twist_formula}
    Let $\varphi$ be a measurable function on $X$ that is locally integrable on $X^\reg$ and let $H$ be an integral Cartier divisor on an open subset $U \subset X$ with local equation $h \in \cM_X^*(U)$. Then,
    $$
        \cI [(X,D);\varphi]|_U = h^{-1} \cdot \cI[(U,D|_U+H);\varphi]
    $$
    as $\cO_U$-submodules of $\cM_U$.
\end{lemma}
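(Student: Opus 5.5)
The plan is to compare adapted volume forms directly. Since the statement is local and both sides are $\cO_U$-submodules of $\cM_U$, we may shrink $U$ around a point $x$. We may then assume that $\omega_X^{[q]}(qD)$ has a generator $\beta$ on $U$, for a positive integer $q$ with $q(K_X+D)$ Cartier.

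First we identify a generator for the twisted pair. Because $H$ is an integral Cartier divisor, $K_U + D|_U + H$ is again $\bbQ$-Cartier and $q(K_U+D|_U+H)$ is Cartier. The sheaf $\omega_U^{[q]}(q(D+H))$ is $\omega_U^{[q]}(qD)\otimes \cO_U(qH)$, up to taking reflexive hulls; since $\cO_U(qH)$ is invertible, no hull is actually needed. As $\cO_U(H)$ is generated by $h^{-1}$, the form $\beta' := h^{-q}\beta$ is a generator of $\omega_U^{[q]}(q(D+H))$ on $U$. I would check this on $U\cap X^\reg$ by computing divisors of meromorphic $q$-canonical forms, $\divisor(\beta') = -qD - qH$. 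Normality then lets the identification extend, since two reflexive rank-one sheaves agreeing off codimension $2$ agree.

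Next we compute the adapted volume form. From the definition,
$$
dV_{\beta'} = \Big(\sqrt{-1}^{qn^2} h^{-q}\bar h^{-q}\beta\wedge\bar\beta\Big)^{1/q} = |h|^{-2}\, dV_\beta
$$
on $U\cap X^\reg$, away from the zeros and poles of $h$, which form a set of measure zero.

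Finally we compare the two sides. For $f\in\cM_{X,y}$ with $y\in U$, set $g := hf \in \cM_{U,y}$. Then
$$
\int |g|^2 e^{-2\varphi}\, dV_{\beta'} = \int |h|^2|f|^2 e^{-2\varphi}|h|^{-2}\, dV_\beta = \int |f|^2 e^{-2\varphi}\, dV_\beta
$$
on any small neighborhood of $y$. So $f \in \cI[(X,D);\varphi]_y$ if and only if $hf\in \cI[(U,D|_U+H);\varphi]_y$, which is exactly the claimed equality $\cI[(X,D);\varphi]|_U = h^{-1}\cdot\cI[(U,D|_U+H);\varphi]$. Here we use that $h$ is a unit of $\cM_U$, so multiplication by $h$ is a bijection of $\cM_U$.

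The only slightly delicate step is the first one: justifying that $h^{-q}\beta$ generates the reflexive sheaf $\omega_U^{[q]}(q(D+H))$. I would handle it via the reflexive/normality argument above. Independence of the choice of generator and of $q$ was already noted after Definition~\ref{def:multiplier_ideal}, so these particular choices are legitimate.
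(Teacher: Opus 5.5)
Your proposal is correct and is essentially the paper's argument: you take $h^{-q}\beta$ as a generator of $\omega_U^{[q]}\big(q(D|_U+H)\big)$, note that $dV_\beta = |h|^2\, dV_{h^{-q}\beta}$, and conclude that $f$ lies in $\cI[(X,D);\varphi]_y$ if and only if $hf$ lies in $\cI[(U,D|_U+H);\varphi]_y$. The paper simply calls the generator claim clear; your justification via divisors on $U\cap X^\reg$ and reflexivity on the normal space fills in that step.
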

\begin{proof}
    Since $H$ is an integral Cartier divisor on $U$, the divisor $q(K_U + D|_U + H)$ is Cartier if $q(K_X + D)$ is Cartier. Let $\beta$  be a generator of $\omega_X^{[q]}(qD)$ on $U$ after shrinking $U$ if necessary. Then, it is clear that $h^{-q}\beta$ generates $\omega_U^{[q]}\big(q(D|_U+H)\big)$. Thus, the definition of adapted volume forms gives
    $$
        dV_{\beta} = |h|^2 dV_{h^{-q}\beta}
    $$
    on $U$, which implies $f \in \cI[(X,D);\varphi]_x$ if and only if $fh \in \cI[(U,D|_U+H);\varphi]_x$ for $x \in U$.
\end{proof}

We next show that the denominators of $\cI[(X,D);\varphi]$ can be cleared locally when $\varphi$ is locally bounded above. We then prove coherence for quasi-psh $\varphi$.

\begin{proposition} \label{prop:fractional_ideal_sheaf}
    Let $\varphi \in L_{\loc}^1(X^\reg)$ be a measurable function on $X$ that is locally bounded above.
    Then for every $x \in X$, there exist an open neighborhood $U$ of $x$ and a nonzero holomorphic function $h \in \cO_X(U)$ such that
    $$
        h \cdot \cI[(X,D);\varphi]|_U \subset \cO_U.
    $$
\end{proposition}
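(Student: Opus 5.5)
The plan is to reduce to the effective case, Proposition~\ref{prop:effective_imply_ideal}, using the twist formula, Lemma~\ref{lem:twist_formula}. The only obstruction to $\cI[(X,D);\varphi]$ being an ideal sheaf is the negative part of $D$. Adding a sufficiently positive integral Cartier divisor $H$ to $D$ removes this negative part, and the local equation of $H$ then clears the denominators.

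Fix $x \in X$. First I will choose a Stein open neighborhood $U$ of $x$, say relatively compact in a local embedding, such that only finitely many prime components of $\Supp D$ meet $U$. Write $D|_U = D^+ - D^-$, where $D^+, D^-$ are effective $\bbQ$-divisors without common components. The next step is to find a nonzero $h \in \cO_X(U)$ such that $\divisor(h) \ge \lceil D^- \rceil$ on $U$; after shrinking $U$, I would take $h$ to be a nonzero element of the ideal of the reduced analytic set $\Supp D^- \cap U$. Such an $h$ exists because $\Supp D^-$ is a proper analytic subset and $U$ is Stein, or simply because the ideal sheaf of $\Supp D^-$ is coherent and nonzero near $x$. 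Replacing $h$ by a suitable power $h^k$ with $k \ge \max$ of the coefficients of $D^-$, we get $\divisor(h) \ge D^-$.

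Let $H := \divisor(h)$ on $U$. This is an integral Cartier divisor, since it is principal, and $D|_U + H$ is effective. Lemma~\ref{lem:twist_formula} applies because $q(K_U + D|_U + H)$ is Cartier, and it gives
$$
    h\cdot \cI[(X,D);\varphi]|_U = \cI[(U,D|_U+H);\varphi].
$$
Since $\varphi \in L^1_{\loc}(X^\reg)$ is locally bounded above and $D|_U + H$ is effective, Proposition~\ref{prop:effective_imply_ideal}, applied to the normal space $U$ with the pair $(U, D|_U+H)$, shows that the right-hand side is contained in $\cO_U$. This is exactly the claim.

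The main, though mild, obstacle is producing the holomorphic $h$ with $\divisor(h) \ge D^-$, since $D^-$ is only a Weil divisor and need not be Cartier. The approach handles this by not requiring $\divisor(h)$ to equal any multiple of $D^-$: any nonzero function vanishing to high enough order along each of the finitely many components suffices, and $H$ is automatically Cartier because it is principal. If $D^-=0$ near $x$, take $h=1$.
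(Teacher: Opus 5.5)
Your proposal is correct and follows the paper's proof. Like the paper, you take a small Stein neighborhood meeting only finitely many components of $D$, choose a nonzero holomorphic $h$ with $D|_U+\divisor(h)\ge 0$, and combine Lemma~\ref{lem:twist_formula} with Proposition~\ref{prop:effective_imply_ideal}; the only difference is that the paper gets $h$ directly from Cartan's theorem A, whereas you build it explicitly as a power of a nonzero section of the ideal of $\Supp D^-$.
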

\begin{proof}
    For $x \in X$, one can find a Stein open neighborhood $U$ of $x$ that intersects only finitely many prime divisors appearing in $D$. Write $D = \sum_i d_i D_i$ on $U$, where $D_i$ are prime divisors with $D_i \cap U \neq \emptyset$. By Cartan's theorem A, there exists a nonzero holomorphic function $h \in \cO_X(U)$ such that $D + \divisor (h) \ge 0$ on $U$. Set $H := \divisor (h)$. By Lemma~\ref{lem:twist_formula}, we have
    $$
        h \cdot \cI[(X,D);\varphi]|_U = \cI[(U,D|_U+H);\varphi].
    $$
    Thanks to Proposition~\ref{prop:effective_imply_ideal}, the sheaf $\cI[(U,D|_U+H);\varphi]$ is an ideal sheaf of $\cO_U$ since $\varphi$ is locally bounded above and $D|_U + H$ is effective. Therefore, $h \cdot \cI[(X,D);\varphi]|_U \subset \cO_U$, as desired.
\end{proof}

\begin{corollary} \label{cor:multiplier_ideal_coherence}
    For a quasi-psh function $\varphi$ on $X$, the multiplier ideal sheaf $\cI[(X,D);\varphi]$ is a coherent analytic $\cO_X$-submodule of $\cM_X$.
\end{corollary}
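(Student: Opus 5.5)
Coherence is local on $X$, so we fix $x\in X$ and work on a small neighborhood; the plan is to reduce to Nadel's coherence theorem on a manifold via a resolution, and use the twist of Lemma~\ref{lem:twist_formula} to keep everything inside $\cO$. We choose a Stein open neighborhood $U$ of $x$ and a nonzero $h\in\cO_X(U)$ with $D|_U+H\ge 0$, where $H=\divisor(h)$, exactly as in the proof of Proposition~\ref{prop:fractional_ideal_sheaf}. By Lemma~\ref{lem:twist_formula},
$$
\cI[(X,D);\varphi]|_U = h^{-1}\cdot \cI[(U,D|_U+H);\varphi].
$$
Multiplication by $h^{-1}$ is an $\cO_U$-linear isomorphism of $\cM_U$ onto itself. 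It therefore suffices to show that $\cI[(U,D'); \varphi]$ with $D':=D|_U+H$ effective is a coherent ideal sheaf. By Proposition~\ref{prop:effective_imply_ideal} it is already an ideal sheaf, since quasi-psh functions are locally bounded above.

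For coherence we take a log resolution $\mu:Y\to U$, after shrinking $U$ to a relatively compact Stein subset. This can be done by Hironaka's resolution for complex spaces, arranging that $\mu$ is projective, biholomorphic over $U^\reg\setminus\Supp D'$, and that $\Exc(\mu)\cup\mu^{-1}_*(\Supp D')$ is snc, so that Lemma~\ref{lem:local_formula_for_snc} applies. Proposition~\ref{prop:transformation_formula}(1) then gives
$$
\cI[(U,D');\varphi]=\mu_*\Big(\cO_Y(\lceil A\rceil)\otimes\cI[(Y,0);\tilde\varphi]\Big),
$$
with $\tilde\varphi$ quasi-psh on the manifold $Y$. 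Nadel's theorem shows that $\cI[(Y,0);\tilde\varphi]$ is a coherent ideal sheaf on $Y$. Tensoring with the invertible sheaf $\cO_Y(\lceil A\rceil)$ keeps it coherent, and Grauert's direct image theorem for the proper map $\mu$ shows that $\mu_*$ of this sheaf is a coherent $\cO_U$-module. One point needs care here: the formula in Proposition~\ref{prop:transformation_formula} identifies the pushforward as a submodule of $\cM_U$ via $\mu_*\cM_Y=\cM_U$, and we must check that this identification agrees with the abstract direct image sheaf. It does, because $\mu$ is bimeromorphic and $U$ is normal, so sections of $\mu_*\mathcal F$ for $\mathcal F\subset \cM_Y$ are exactly meromorphic functions on $U$ whose pullbacks lie in $\mathcal F$.

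The main obstacle is the availability of the resolution $\mu$ with the required properties in the analytic category: it must be proper, an isomorphism over $U^\reg\setminus\Supp D'$, and have snc exceptional-plus-strict-transform divisor. This is why we first shrink to a relatively compact Stein open set, where Hironaka's (or Bierstone--Milman's) embedded resolution applies with finitely many blow-ups. Once $\mu$ is in hand, the remaining steps are formal consequences of Nadel and Grauert.
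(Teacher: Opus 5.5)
Your proposal is correct and follows the paper's proof: shrink $X$, take a resolution as in Lemma~\ref{lem:local_formula_for_snc} (Bierstone--Milman), apply Proposition~\ref{prop:transformation_formula}(1), then use Nadel's coherence on $Y$, the invertibility of $\cO_Y(\lceil A\rceil)$, and Grauert's direct image theorem. Your preliminary twist by $h$ to make the boundary effective is harmless but unnecessary, because Grauert's theorem gives coherence of the direct image whether or not it lies inside $\cO_X$.
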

\begin{proof}
    Since coherence is a local property, we may assume that $X$ is small enough that $\omega_X^{[q]}(qD)$ admits a global generator $\beta$. Let $\mu: Y \to X$, $E$, $A$ be as in Lemma~\ref{lem:local_formula_for_snc}; such a $\mu$ exists after shrinking $X$, by resolution of singularities for complex analytic spaces \cite{bierstone1997canonical}. We follow the notation of Proposition~\ref{prop:transformation_formula}.

    Since $\tilde\varphi$ is a quasi-psh function on $Y$, the multiplier ideal sheaf $\cI[(Y,0);\tilde\varphi]$ is just the multiplier ideal sheaf of $\tilde\varphi$ on the complex manifold $Y$, which is coherent \cite{nadel1990multiplier}. Being an integral Cartier divisor, the divisor $\lceil A \rceil$ gives an invertible sheaf $\cO_Y\big(\lceil A \rceil\big)$. By Proposition~\ref{prop:transformation_formula} and Grauert's direct image theorem \cite{grauert1960theorem},
    $$
        \cI[(X,D);\varphi] = \mu_* \left( \cO_Y\big(\lceil A \rceil\big) \otimes \cI[(Y,0);\tilde\varphi] \right)
    $$
    is coherent.
\end{proof}

The transformation formula reduces the strong openness property of the multiplier ideal sheaf $\cI[(X,D);\varphi]$ to the strong openness theorem of Guan--Zhou on the complex manifold $Y$ \cite{guan2015proof}. We next establish the resulting strong openness property.

\begin{corollary}[Strong openness] \label{cor:strong_openness}
    Let $\varphi$ be a quasi-psh function on $X$. Then
    $$
        \cI[(X,D);\varphi] = \bigcup_{\epsilon > 0} \cI[(X,D);(1+\epsilon)\varphi].
    $$
\end{corollary}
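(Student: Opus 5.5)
The plan is to reduce to the strong openness theorem of Guan--Zhou on a resolution, using the transformation formula of Proposition~\ref{prop:transformation_formula}(1). The inclusion $\cI[(X,D);(1+\epsilon)\varphi] \subset \cI[(X,D);\varphi]$ is the easy direction. Since the statement is local, we may assume $\varphi$ is bounded above near the point. Replacing $\varphi$ by $\varphi - C$ changes neither sheaf, so we may assume $\varphi \le 0$. Then $e^{-2(1+\epsilon)\varphi} \ge e^{-2\varphi}$, and finiteness of the integral for $(1+\epsilon)\varphi$ implies finiteness for $\varphi$.

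For the reverse inclusion, we work locally and fix $\mu\colon Y \to X$, $E$, $A$, $q$ as in Lemma~\ref{lem:local_formula_for_snc}; these exist after shrinking $X$ by \cite{bierstone1997canonical}. Applying Proposition~\ref{prop:transformation_formula}(1) to both $\varphi$ and $(1+\epsilon)\varphi$ gives
$$
\cI[(X,D);(1+\epsilon)\varphi] = \mu_*\Big(\cO_Y(\lceil A\rceil)\otimes \cI[(Y,0);(1+\epsilon)\varphi\circ\mu + \varphi_{\lceil A\rceil - A}]\Big).
$$
The twist $\varphi_{\lceil A\rceil - A}$ is not multiplied by $1+\epsilon$, so this is not literally the Guan--Zhou statement for a single weight. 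I would handle this with H\"older's inequality, or more simply by noting that $(1+\epsilon)\tilde\varphi = (1+\epsilon)\varphi\circ\mu + (1+\epsilon)\varphi_{\lceil A\rceil - A}$. Since $\varphi_{\lceil A\rceil - A}\le O(1)$ locally, we get $(1+\epsilon)\tilde\varphi \le (1+\epsilon)\varphi\circ\mu + \varphi_{\lceil A\rceil-A} + O(1)$. Hence
$$
\cI[(Y,0);(1+\epsilon)\tilde\varphi] \subset \cI[(Y,0);(1+\epsilon)\varphi\circ\mu+\varphi_{\lceil A\rceil-A}].
$$

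Now let $f \in \cI[(X,D);\varphi]_x$. By the transformation formula, for each $y\in\mu^{-1}(x)$ the germ of $(f\circ\mu)\prod z_i^{\lceil a_i\rceil}$ lies in $\cI[(Y,0);\tilde\varphi]_y$. Guan--Zhou strong openness \cite{guan2015proof} for the quasi-psh function $\tilde\varphi$ on the manifold $Y$ yields $\epsilon_y>0$ with this germ in $\cI[(Y,0);(1+\epsilon_y)\tilde\varphi]_y$. By the inclusion above, it also lies in the twisted sheaf for $(1+\epsilon_y)\varphi\circ\mu+\varphi_{\lceil A\rceil-A}$.

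The main obstacle is passing from pointwise $\epsilon_y$ to a uniform $\epsilon$ over the compact fiber $\mu^{-1}(x)$. The membership at $y$ means integrability on a neighborhood $V_y$. Moreover, the multiplier ideals decrease as $\epsilon$ grows, because locally $\tilde\varphi \le 0$ after subtracting a constant. So membership for $\epsilon_y$ persists for all smaller $\epsilon$ on the same $V_y$. Cover $\mu^{-1}(x)$ by finitely many $V_{y_j}$ and take $\epsilon=\min_j\epsilon_{y_j}$. By properness, $\mu^{-1}(U')\subset\bigcup V_{y_j}$ for some neighborhood $U'$ of $x$. The integral of $|f|^2e^{-2(1+\epsilon)\varphi}dV_\beta$ over $U'\cap X^\reg$ is then finite, via the change of variables in the proof of Proposition~\ref{prop:transformation_formula}. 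Therefore $f\in\cI[(X,D);(1+\epsilon)\varphi]_x$.
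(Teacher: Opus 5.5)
Your proof is correct and matches the paper's argument. The paper likewise applies the transformation formula to $(1+\epsilon)\varphi$ and normalizes $\varphi\circ\mu\le 0$ and $\varphi_{\lceil A\rceil-A}\le 0$ to obtain $(1+\epsilon)\tilde\varphi_0\le\tilde\varphi_\epsilon\le\tilde\varphi_0$, where $\tilde\varphi_\epsilon:=(1+\epsilon)\varphi\circ\mu+\varphi_{\lceil A\rceil-A}$; it then applies Guan--Zhou on $Y$ and concludes by compactness of $\mu^{-1}(x)$ and properness of $\mu$. Only note that the monotonicity in $\epsilon$ you use at the end follows most directly from $\varphi\circ\mu\le 0$, which your normalization $\varphi\le 0$ already gives.
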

\begin{proof}
    Fix $x \in X$. After shrinking $X$ around $x$ and subtracting a constant from $\varphi$, we may assume that $\varphi \le 0$. Since $(1+\epsilon)\varphi \le \varphi$, one inclusion is immediate. For the reverse inclusion, let $f \in \cI[(X,D);\varphi]_x$. We keep the notation of Proposition~\ref{prop:transformation_formula}, and for $\epsilon \ge 0$, write
    $$
        \tilde\varphi_\epsilon := (1+\epsilon) \varphi\circ\mu + \varphi_{\lceil A \rceil - A},
    $$
    so that Proposition~\ref{prop:transformation_formula}, applied to the quasi-psh function $(1+\epsilon)\varphi$, gives
    $$
        \cI[(X,D);(1+\epsilon)\varphi] = \mu_* \Big( \cO_Y \big( \lceil A \rceil \big) \otimes \cI[(Y,0);\tilde\varphi_\epsilon] \Big).
    $$
    Let $y \in \mu^{-1}(x)$. After subtracting sufficiently large constants, we may assume that $\varphi\circ\mu \le 0$ and $\varphi_{\lceil A \rceil - A} \le 0$ near $y$. This does not affect the corresponding multiplier ideal sheaves. Thus
    $$
        (1+\epsilon) \tilde\varphi_0 = (1+\epsilon) \varphi\circ\mu + (1+\epsilon) \varphi_{\lceil A \rceil - A} \le \tilde\varphi_\epsilon \le \tilde\varphi_0.
    $$
    Hence
    $$
        \cI[(Y,0);(1+\epsilon)\tilde\varphi_0] \subset \cI[(Y,0);\tilde\varphi_\epsilon] \subset \cI[(Y,0);\tilde\varphi_0].
    $$
    Since $\cI[(Y,0);\tilde\varphi_0] = \bigcup_{\epsilon>0} \cI[(Y,0);(1+\epsilon)\tilde\varphi_0]$ by the strong openness theorem \cite{guan2015proof}, we conclude that
    $$
        \cI[(Y,0);\tilde\varphi_0] = \bigcup_{\epsilon > 0} \cI[(Y,0);\tilde\varphi_\epsilon].
    $$
    Consequently, for each $y \in \mu^{-1}(x)$, there exists $\epsilon_y > 0$ such that $(f\circ\mu)_y$ belongs to the stalk of $\cO_Y\big(\lceil A \rceil\big) \otimes \cI[(Y,0);\tilde\varphi_{\epsilon_y}]$ at $y$. Since $\mu^{-1}(x)$ is compact and $\cI[(Y,0);\tilde\varphi_\epsilon]$ increases as $\epsilon$ decreases, we can find finitely many points $y_1,\ldots, y_k$ and open neighborhoods $V_i$ of $y_i$ covering $\mu^{-1}(x)$ such that $f\circ\mu$ is a section of $\cO_Y\big(\lceil A \rceil\big) \otimes \cI[(Y,0);\tilde\varphi_{\epsilon_{y_i}}]$ on $V_i$. Set $\epsilon := \min_i \epsilon_{y_i}$. Since $\mu$ is proper, there exists an open neighborhood $U$ of $x$ such that $\mu^{-1}(U) \subset \bigcup V_i$. Thus, we have $f \in \cI[(X,D);(1+\epsilon)\varphi]_x$.
\end{proof}

\begin{remark}
    For a quasi-psh function $\varphi$ on a complex analytic space $X$ that may be neither normal nor $\bbQ$-Gorenstein, Li has introduced two multiplier ideal sheaves attached to $X$ itself rather than to a pair: the \emph{Grauert--Riemenschneider multiplier ideal sheaf} $\cI_{\mathrm{GR}}(\varphi)$ in \cite{li2021briancon} and the \emph{Nadel-type multiplier ideal sheaf} $\cI_{\mathrm{NO}} (\varphi)$ in \cite{li2024nadel}, the latter via the Ohsawa measure. The coherence and strong openness properties are obtained there as well. These sheaves are defined without adapted measures and are different from $\cI[(X,0);\varphi]$ in general. For $X$ normal and $\bbQ$-Gorenstein one has only
    $$
        \cI_{\mathrm{NO}}(\varphi) \subset \cI[(X,0);\varphi] \subset \cI_{\mathrm{GR}}(\varphi),
    $$
    and both inclusions may be strict (see \cite[Remark~3.3]{li2021grauert}). These notions coincide with $\cI[(X,0);\varphi]$ when $X$ is a normal local complete intersection.
\end{remark}

We also recall the maximal non-lc ideal sheaf.

\begin{definition}[cf. {\cite[Definition~7.1]{fujino2011supplements}}] \label{def:maximal_non-lc_ideal}
    Let $X$ be a normal complex analytic space and let $D$ be an effective Weil $\bbQ$-divisor such that $K_X+D$ is $\bbQ$-Cartier. Working  locally on $X$, choose a positive integer $q$ such that $q(K_X+D)$ is Cartier. Let $\mu:Y\to X$ be a log resolution of $(X,D)$ and set
    $$
        A := K_Y-\frac{1}{q}\mu^* \bigl(q(K_X+D)\bigr) = \sum_i a_i E_i, \quad F:=\sum_{a_i\le -1}E_i.
    $$
    The {\bf maximal non-lc ideal sheaf} of $(X,D)$ is defined by
    $$
        \cJ'(X,D) = \mu_* \cO_Y \bigl( \lceil A+\epsilon F\rceil \bigr)
    $$
    for sufficiently small $\epsilon > 0$.
\end{definition}

The locally defined ideal sheaves $\cJ'(X,D)$ glue to a well-defined ideal sheaf on $X$. The discrepancy divisor $A$ is a $\bbQ$-divisor, with $qA$ integral. For $0<\epsilon<1/q$, we have
$$
    \lceil A+\epsilon F \rceil = \lceil A \rceil + \sum_{\substack{a_i \in \bbZ \\ a_i \le -1}}E_i.
$$
Thus the perturbation changes the round-up precisely at the negative integral discrepancies.

The resolution-independence argument in \cite[Lemma~7.3]{fujino2011supplements} applies in the analytic setting as well. The resulting sheaf is a coherent ideal sheaf, independent of sufficiently small $\epsilon > 0$, and it detects log canonical singularities.

\begin{proposition}[{\cite[Lemma~7.2]{fujino2011supplements}}]
    Let $(X,D)$ be as in Definition~\ref{def:maximal_non-lc_ideal}.
    Then $(X,D)$ is lc if and only if $\cJ'(X,D) = \cO_X$.
\end{proposition}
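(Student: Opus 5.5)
We need to prove: $(X,D)$ is lc if and only if $\cJ'(X,D)=\cO_X$, with $\cJ'(X,D)=\mu_*\cO_Y(\lceil A+\epsilon F\rceil)$ for $0<\epsilon<1/q$. The approach is to compare coefficients of $\lceil A+\epsilon F\rceil$ divisor by divisor, using the identity recorded just before the statement,
$$
\lceil A+\epsilon F\rceil = \lceil A\rceil + \sum_{a_i\in\bbZ,\ a_i\le -1} E_i .
$$
Recall that lc means every coefficient $a_i$ of $A$ satisfies $a_i\ge -1$. It suffices to test this on one log resolution, by the standard resolution-independence of discrepancies; this also works analytically, since log resolutions exist locally by \cite{bierstone1997canonical}. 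The question is local, so we work on a small Stein neighborhood.

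\emph{Coefficientwise claim.} The coefficient $c_i$ of $E_i$ in $\lceil A+\epsilon F\rceil$ is $\ge 0$ for all $i$ if and only if $a_i\ge -1$ for all $i$. To see this, take cases on $a_i$.
\begin{itemize}
\item If $a_i>-1$, then $\lceil a_i\rceil\ge 0$ and $E_i$ is not in $F$, so $c_i\ge 0$.
\item If $a_i=-1$, then $c_i=-1+1=0$.
\item If $a_i<-1$ with $a_i\notin\bbZ$, then $c_i=\lceil a_i\rceil\le -1$.
\item If $a_i<-1$ with $a_i\in\bbZ$, then $c_i=a_i+1\le -1$.
\end{itemize}

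\emph{The lc direction.} Suppose $(X,D)$ is lc. Then $\lceil A+\epsilon F\rceil\ge 0$, so $\cO_Y\subset\cO_Y(\lceil A+\epsilon F\rceil)$. Pushing forward gives $\cO_X=\mu_*\cO_Y\subset\cJ'(X,D)$; here $\mu_*\cO_Y=\cO_X$ because $X$ is normal and $\mu$ is proper bimeromorphic. Since $\cJ'(X,D)$ is an ideal sheaf, as noted after Definition~\ref{def:maximal_non-lc_ideal}, equality follows.

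\emph{The non-lc direction.} Suppose some $a_i<-1$. By the claim, $c_i\le -1$ for that $i$. There are two cases for $E_i$.
\begin{itemize}
\item If $E_i$ is $\mu$-exceptional or a strict transform of a component of $D$ meeting our neighborhood, look at the constant function $1$. Then $1\in\cJ'(X,D)_x$ would force $\mu^*1=1$ to be a section of $\cO_Y(\lceil A+\epsilon F\rceil)$ near $\mu^{-1}(x)$. That in turn requires $c_j\ge 0$ along every component $E_j$ meeting $\mu^{-1}(x)$. Choosing $x\in\mu(E_i)$, this fails, so $1\notin\cJ'(X,D)_x$.
\item If $E_i$ is a non-exceptional component, it is the strict transform of a prime divisor with coefficient $>1$ in $D$. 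The same argument with $x$ a general point of that divisor works.
\end{itemize}
Hence $\cJ'(X,D)\ne\cO_X$.

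The main point requiring care is that the lc condition, usually stated over all divisors over $X$, can be checked on the single log resolution $\mu$, including non-exceptional components. I would invoke the standard blow-up computation: blowing up strata of an snc divisor whose coefficients are all $\ge -1$ keeps all coefficients $\ge -1$. This is the analytic analogue of \cite[Corollary~2.31]{kollar1997singularities}. Everything else is coefficient bookkeeping.
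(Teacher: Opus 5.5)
Your proof is correct and is essentially the log-resolution coefficient computation that the paper defers to via \cite[Lemma~7.2]{fujino2011supplements}. For $0<\epsilon<1/q$, $\lceil A+\epsilon F\rceil$ is effective exactly when all $a_i\ge -1$; this gives both directions through $\mu_*\cO_Y=\cO_X$ and by testing the section $1$ near a point of $\mu(E_i)$, with the single-resolution lc criterion supplied as in \cite[Corollary~2.31]{kollar1997singularities} (your second case in the non-lc direction is already covered by the first, since every component of $\Supp A$ is exceptional or a strict transform of a component of $D$).
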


The maximal non-lc ideal sheaf $\cJ'(X,D)$ can also be expressed in terms of analytic multiplier ideal sheaves attached to the pair $(X,D)$.

\begin{proposition}[{\cite[Lemma~7.5]{fujino2011supplements}}] \label{prop:analytic_characterization_of_maximal_non-lc_ideal}
    Let $(X,D)$ be as in Definition~\ref{def:maximal_non-lc_ideal}, and set $\mathfrak{b} := \cI[(X,D);0]$.
    Then, locally on $X$,
    $$
        \cJ'(X,D) = \cI[(X,D);-\epsilon \varphi_{\mathfrak{b}}]
    $$
    for sufficiently small $\epsilon > 0$.
\end{proposition}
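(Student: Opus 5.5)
We must prove that locally $\cJ'(X,D) = \cI[(X,D);-\epsilon\varphi_{\mathfrak b}]$ for small $\epsilon>0$. Work locally, fix $q$ and a generator $\beta$ of $\omega_X^{[q]}(qD)$. Take a log resolution $\mu:Y\to X$ as in Lemma~\ref{lem:local_formula_for_snc} which also principalizes $\mathfrak b$, so that $\mathfrak b\cdot\cO_Y=\cO_Y(-G)$ with $G=\sum_i g_iE_i$ effective and $E=\sum_i E_i$ snc. Write $A=\sum_i a_iE_i$.

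\emph{Step 1: identify $G$.} By Theorem~\ref{thm:analytic_mis_extend_algebraic_mis} with $c\to 0$, or directly by Proposition~\ref{prop:transformation_formula}(1) with $\varphi=0$, we have $\mathfrak b=\mu_*\cO_Y(\lceil A\rceil)$. This is an ideal by Proposition~\ref{prop:effective_imply_ideal}, since $D\ge0$. The key claim is:
\[
g_i>0 \iff a_i\le -1 .
\]
For the direction ($\Leftarrow$): if $a_i\le -1$, then $\lceil a_i\rceil\le -1$, so every section of $\mathfrak b$ pulled back vanishes along $E_i$, and hence $g_i\ge1$. For the direction ($\Rightarrow$), I expect the main obstacle. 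Suppose $a_i>-1$. We need some section of $\mathfrak b$ whose pullback does not vanish along $E_i$. If $E_i$ is not $\mu$-exceptional, it is the strict transform of a component of $D$ with coefficient $d<1$, and $1\in\mathfrak b$ generically there, which suffices. If $E_i$ is exceptional with center $Z=\mu(E_i)$, I would show that $\mathfrak b$ is trivial at the generic point of $Z$. The idea is to compare with the lc/klt locus, using $\mathfrak b\supset\cJ(X,D)$-type inclusions. A cleaner route avoids this: replace $\mathfrak b$ by a further blowup of $Y$ and use only what is needed. Namely, it suffices to know that $G\ge \delta F$ for some $\delta>0$ and that $\Supp G\subset \Supp F\cup(\text{components where }a_i\in(-1,0]\text{ and }g_i\text{ small})$. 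Choosing $\epsilon$ small makes the latter harmless because $\lceil a_i+\epsilon g_i\rceil=\lceil a_i\rceil$ whenever $a_i\notin\bbZ$ or $a_i>-1$ integral with $a_i\ge0$.

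\emph{Step 2: compute with the transformation formula.} Since $-\epsilon\varphi_{\mathfrak b}$ is not quasi-psh, Proposition~\ref{prop:transformation_formula} does not apply verbatim. However, the change of variables in its proof works for any measurable weight. Near $y$, the pulled-back integrand is comparable to $|f\circ\mu|^2\prod|z_i|^{2(a_i+\epsilon g_i)}$, since $\varphi_{\mathfrak b}\circ\mu=\sum g_i\log|z_i|+O(1)$. Exactly as in the proof of Theorem~\ref{thm:analytic_mis_extend_algebraic_mis}, with $c=-\epsilon$ (which that argument allows, as only the rounding is used), local integrability is equivalent to $f\circ\mu\in\cO_Y(\lceil A+\epsilon G\rceil)$. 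Hence
\[
\cI[(X,D);-\epsilon\varphi_{\mathfrak b}]=\mu_*\cO_Y(\lceil A+\epsilon G\rceil).
\]

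\emph{Step 3: compare roundings.} Take $\epsilon<1/(q\max g_i)$. Then $\lceil a_i+\epsilon g_i\rceil=\lceil a_i\rceil+1$ if $a_i\in\bbZ$ and $g_i>0$, and $\lceil a_i+\epsilon g_i\rceil=\lceil a_i\rceil$ otherwise. By the displayed formula after Definition~\ref{def:maximal_non-lc_ideal}, $\lceil A+\epsilon F\rceil$ increments exactly at the integral $a_i\le -1$. The two divisors can therefore differ only at components with $a_i\in\bbZ_{\ge0}$ and $g_i>0$. These are excluded if the claim in Step 1 holds. Even without the full claim, any such components are where $\lceil A\rceil\ge0$, and increasing the coefficient there does not change $\mu_*$ when $E_i$ is exceptional, since $\mu_*\cO_Y(B)=\mu_*\cO_Y(B+E_i)$ can fail only in general. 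I would therefore prove the claim in Step 1 for integral $a_i\ge0$. For non-exceptional $E_i$, $a_i\ge 0$ integral means the coefficient of $D$ is $\le 0$, so $E_i\not\subset\Supp G$. For exceptional $E_i$, I would handle it by the negativity lemma: sections of $\cO_Y(\lceil A+\epsilon G\rceil)$ and of $\cO_Y(\lceil A+\epsilon F\rceil)$ both equal $\mu_*$ of the divisor with exceptional nonnegative parts dropped. This gives the equality of pushforwards, and hence the proposition.
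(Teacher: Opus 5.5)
Your Step~2 is sound. It is exactly where the adapted volume formula of Lemma~\ref{lem:local_formula_for_snc} enters; the paper itself only cites the log-resolution computation of Fujino--Schwede--Takagi. The rounding argument in the proof of Theorem~\ref{thm:analytic_mis_extend_algebraic_mis} works for any real $c$, so $\cI[(X,D);-\epsilon\varphi_{\mathfrak b}]=\mu_*\cO_Y(\lceil A+\epsilon G\rceil)$.

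The gap is in the comparison with $\lceil A+\epsilon F\rceil$. Your ``key claim'' $g_i>0\iff a_i\le-1$ is false for exceptional divisors. Take $X=\bbC^2$, $D=L=\{z_1=0\}$, and $\mu$ the blow-up of the origin with exceptional curve $E$. Then $\mathfrak b=(z_1)$ and $A=E-\mu^*L=-\tilde L$, so $a_E=0$ while $g_E=\ord_E(z_1\circ\mu)=1$. Indeed $\lceil A+\epsilon G\rceil=E\neq 0=\lceil A+\epsilon F\rceil$. So components with $a_i\in\bbZ_{\ge0}$ and $g_i>0$ really occur and cannot be excluded. In particular, $\mathfrak b$ need not be trivial at the generic point of $\mu(E_i)$, and your plan to ``prove the claim for integral $a_i\ge0$'' fails. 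Likewise, your Step~1 assertion that $\lceil a_i+\epsilon g_i\rceil=\lceil a_i\rceil$ for integral $a_i\ge 0$ is wrong whenever $g_i>0$, as your own Step~3 computation shows.

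The argument is rescued by the fallback in your last sentence, but that is not the negativity lemma, and it needs a proof; this is where $D\ge 0$ is used.

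\begin{itemize}
\item \emph{Non-exceptional components.} Such a component is the strict transform of a prime $P$ with coefficient $d\ge0$ in $D$, so $a=-d$. Since $\mathfrak b$ is trivial generically along $P$ when $d<1$, $g>0$ forces $d\ge 1$. With $\epsilon g<1/q$, the two divisors have equal coefficients there, and these are $\le 0$.
\item \emph{Holomorphy.} If $f$ lies in either pushforward, then $\divisor(f)\ge 0$ on $X$, so $f\in\cO_X$ by normality. Hence $\ord_{E_i}(f\circ\mu)\ge 0$ for every exceptional $E_i$.
\item \emph{Conclusion.} The conditions imposed at exceptional components with nonnegative coefficient are therefore vacuous. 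The two divisors differ only at exceptional components where their coefficients are $a_i+1$ and $a_i$ with $a_i\in\bbZ_{\ge0}$, so the pushforwards coincide.
\end{itemize}

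Without the sign condition on the non-exceptional part this step fails. On the blow-up above, $\mu_*\cO_Y(\tilde L)=\cO_X+z_1^{-1}z_2\cO_X$, but $\mu_*\cO_Y(\tilde L+E)=z_1^{-1}\cO_X$. With this argument replacing the Step~1 claim, your proof goes through.
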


These statements follow from the log resolution computations in \cite[Lemmas~7.2 and~7.5]{fujino2011supplements}, together with the adapted volume formula in Lemma~\ref{lem:local_formula_for_snc}.

\section{Legendre transform and recession functions} \label{sec:legendre_transform_and_recession_functions}

In this section, we recall some basic notions in convex analysis, the Legendre transform and the recession function, together with their basic properties. We then establish a criterion for the integrability of an exponential weight over a polyhedral cone. This criterion will play a crucial role in the proof of our main theorem.

Let $g$ be a function on a subset $C \subset \bbR^n$ that takes its values in $\bbR \cup \{\infty\}$. We call it a {\bf convex function} if the epigraph of $g$ is a convex subset of $\bbR^{n+1}$. Equivalently, $g$ is convex if its domain $\dom g := \{x \in C\,:\, g(x) < \infty \}$ is a convex subset of $\bbR^n$ and
$$
g(\lambda x + (1-\lambda)y) \le \lambda g(x) + (1-\lambda)g(y)
$$
whenever $x,y \in \dom g$ and $\lambda \in [0,1]$.

Extending $g$ to $\bbR^n$ by assigning $\infty$ to $\bbR^n \backslash \dom g$, we regard $g$ as a convex function on $\bbR^n$. Assume that $\dom g \neq \emptyset$, namely, $g$ is proper. The {\bf Legendre transform} of $g$ is the function $g^*$ defined by
$$
g^*(v) = \sup_{x \in \bbR^n} \big( \inp{v}{x} - g(x) \big).
$$

\begin{proposition}
    Let $g$ be a proper convex function on $\bbR^n$.
    \begin{enumerate}
        \item The Legendre transform of $g$ is a proper lower semicontinuous convex function on $\bbR^n$.
        \item The biconjugate $g^{**}$ of $g$ is the greatest proper lower semicontinuous convex function such that $g^{**} \le g$. In particular, $g^{**} = g$ if and only if $g$ is a lower semicontinuous convex function.
    \end{enumerate}
\end{proposition}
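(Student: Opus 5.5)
Part (1) is a direct argument from the definition. The function $v\mapsto \inp{v}{x}-g(x)$ is affine, hence continuous and convex, for each fixed $x\in\dom g$. Points $x\notin\dom g$ contribute $-\infty$ and can be dropped, so
$$
g^*(v)=\sup_{x\in\dom g}\bigl(\inp{v}{x}-g(x)\bigr).
$$
Thus the epigraph of $g^*$ is the intersection over $x\in\dom g$ of the closed half-spaces $\{(v,t)\,:\, t\ge \inp{v}{x}-g(x)\}$. It is therefore closed and convex, which gives both lower semicontinuity and convexity of $g^*$.

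Two properness facts must be checked for (1). First, $g^*>-\infty$ everywhere, since $\dom g\neq\emptyset$ and any $x_0\in\dom g$ gives $g^*(v)\ge \inp{v}{x_0}-g(x_0)>-\infty$. Second, $g^*\not\equiv+\infty$. For this I need an affine minorant of $g$, that is, some $(v_0,c_0)$ with $g(x)\ge \inp{v_0}{x}-c_0$ for all $x$; then $g^*(v_0)\le c_0<\infty$. The minorant is produced by separation. Take $x_0$ in the relative interior of $\dom g$. A proper convex function is finite there, and $(x_0,g(x_0)-1)$ lies outside the closure of the epigraph. Separating this point from the closed convex epigraph closure gives a non-vertical hyperplane. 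Non-verticality holds because $x_0$ is in the relative interior of the domain: a vertical separating hyperplane would also separate $x_0$ from $\dom g$ inside its affine hull, which is impossible. This yields the affine minorant.

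For (2), the first step is the Fenchel--Young inequality $g(x)\ge\inp{v}{x}-g^*(v)$, which gives $g^{**}\le g$. By (1) applied to the proper function $g^*$, the function $g^{**}$ is proper, lower semicontinuous and convex. For maximality, let $h\le g$ be proper, lower semicontinuous and convex. Then $h^*\ge g^*$ and hence $h^{**}\le g^{**}$. It therefore suffices to show $h^{**}=h$ for every proper lower semicontinuous convex $h$. The inequality $h^{**}\le h$ was just proved, so the content is $h\le h^{**}$.

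For this, I would show that $h$ is the pointwise supremum of its affine minorants. Then each minorant $\inp{v}{\cdot}-c$ has $c\ge h^*(v)$, so it is $\le \inp{v}{\cdot}-h^*(v)\le h^{**}$, and we get $h\le h^{**}$. To prove the supremum claim, fix $x$ and $t<h(x)$. Then $(x,t)\notin\operatorname{epi}h$, and $\operatorname{epi}h$ is closed and convex. Strict separation gives $(a,s)\in\bbR^n\times\bbR$ and $\gamma$ with $\inp{a}{y}+s\tau>\gamma>\inp{a}{x}+st$ for all $(y,\tau)\in\operatorname{epi}h$. Letting $\tau\to\infty$ forces $s\ge0$. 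If $s>0$, dividing by $s$ gives an affine minorant whose value at $x$ exceeds $t$. If $s=0$, which can only happen when $x\notin\dom h$, I would add a large multiple of this vertical functional to an affine minorant $\ell$ from (1), namely $\ell+\lambda(\gamma-\inp{a}{\cdot})$ with $\lambda\gg0$. This is still a minorant, because $\gamma-\inp{a}{y}<0$ on $\dom h$, and its value at $x$ exceeds $t$.

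I expect the vertical case $s=0$ to be the main obstacle. The final equivalence is then immediate: if $g^{**}=g$, then $g$ is lower semicontinuous and convex by (1), and the converse is $h^{**}=h$ applied to $h=g$.
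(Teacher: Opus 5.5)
Your proof is correct. The paper states this proposition without proof, as the standard Fenchel--Moreau theorem (cf.\ \cite[Section~12]{rockafellar1997convex}), and your separation argument is essentially the textbook proof; this includes the tilting by $\lambda(\gamma-\langle a,\cdot\rangle)$ that handles the vertical case $s=0$.

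The only fact you assert without justification is that $(x_0,g(x_0)-1)\notin\overline{\operatorname{epi} g}$ for $x_0\in\operatorname{ri}(\dom g)$. This follows from the continuity of a convex function relative to the affine hull of its domain at relative interior points \cite[Theorem~10.1]{rockafellar1997convex}, and it is the only place where the relative interior is needed. Non-verticality is then immediate: if $s=0$, evaluating the strict separation at $(x_0,g(x_0))\in\operatorname{epi} g$ gives a contradiction.
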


In what follows, we will use $\bbR^r$ and $N_\bbR$ interchangeably. For a proper lower semicontinuous convex function $g: N_\bbR \to \bbR \cup \{\infty\}$, we define the {\bf recession function} of $g$ by
$$
    (\rec g) (u) = \lim_{\lambda \to \infty} \frac{g(x_0 + \lambda u) - g(x_0)}{\lambda},
$$
where $x_0$ is any point in the domain of $g$. The recession function $\rec g$ of $g$ is independent of the choice of the reference point in $\dom g$. 

For a nonempty set $C$ in $N_\bbR$, the {\bf indicator function} of $C$ is defined by
$$
\delta_C (x) =
\left\{
\begin{array}{*2{>{\displaystyle}l}}
    0       &   \text{if $x \in C$,} \\
    \infty  &   \text{if $x \not\in C$,}
\end{array}
\right.
$$
while the {\bf support function} of $C$ is given by
$$
    \delta_C^* (v) = \sup_{x \in C} \inp{v}{x}.
$$
As the notation suggests, $\delta^*_C$ is the Legendre transform of the indicator function $\delta_C$. For the converse, one can easily check that
$$
    (\delta_{C}^*)^* = \delta_{\overline{\conv C}}.
$$

The Legendre transform and the recession function describe asymptotic behaviors of a convex function from different perspectives and are related in the following way.

\begin{theorem}[{\cite[Theorem~13.3]{rockafellar1997convex}}] \label{thm:support_function_and_recession_function}
    Let $g$ be a proper lower semicontinuous convex function on $N_\bbR$. Then, the support function of $\dom g^*$ is the recession function $\rec g$ of $g$.
\end{theorem}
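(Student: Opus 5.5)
We prove the two inequalities $\delta^*_{C}\le \rec g$ and $\rec g\le \delta^*_{C}$ separately, where $C:=\dom g^*$. Both follow from the Fenchel--Young inequality together with the biconjugate formula $g=g^{**}$ from the proposition above; no finer structure of $\rec g$ is needed. First we record two preliminaries.
\begin{enumerate}
    \item $C\neq\emptyset$, because $g^*$ is proper by part~(1) of the proposition.
    \item The limit defining $(\rec g)(u)$ exists in $\bbR\cup\{\infty\}$. Indeed, for fixed $x_0\in\dom g$, convexity of $\lambda\mapsto g(x_0+\lambda u)$ on $[0,\infty)$ makes the difference quotient $\lambda\mapsto \bigl(g(x_0+\lambda u)-g(x_0)\bigr)/\lambda$ nondecreasing. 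So the limit is a supremum.
\end{enumerate}
We fix a reference point $x_0\in\dom g$ throughout; the paper notes that the choice does not matter.

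For $\delta^*_C\le \rec g$, let $v\in C$, so $g^*(v)<\infty$. By definition of $g^*$ we have the Fenchel--Young inequality $g(x)\ge \inp{v}{x}-g^*(v)$ for all $x$. Applying it at $x=x_0+\lambda u$ gives
$$
\frac{g(x_0+\lambda u)-g(x_0)}{\lambda}\ \ge\ \langle v,u\rangle+\frac{\langle v,x_0\rangle-g^*(v)-g(x_0)}{\lambda}.
$$
The last term is a finite constant divided by $\lambda$, so letting $\lambda\to\infty$ yields $(\rec g)(u)\ge\inp{v}{u}$. Taking the supremum over $v\in C$ gives $(\rec g)(u)\ge\delta^*_C(u)$.

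For $\rec g\le \delta^*_C$, we use that $g$ is proper, lower semicontinuous and convex. Part~(2) of the proposition then gives $g=g^{**}$, that is,
$$
g(x)=\sup_{v\in C}\bigl(\inp{v}{x}-g^*(v)\bigr),
$$
where the supremum may be restricted to $C$ because $g^*=\infty$ off $C$. We may assume $\delta^*_C(u)<\infty$, since otherwise there is nothing to prove. For each $v\in C$ we split
$$
\inp{v}{x_0+\lambda u}-g^*(v)=\bigl(\inp{v}{x_0}-g^*(v)\bigr)+\lambda\inp{v}{u}\le g(x_0)+\lambda\,\delta^*_C(u).
$$
Taking the supremum over $v\in C$ gives $g(x_0+\lambda u)\le g(x_0)+\lambda\,\delta^*_C(u)$ for every $\lambda>0$. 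Dividing by $\lambda$ and letting $\lambda\to\infty$ gives $(\rec g)(u)\le\delta^*_C(u)$.

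The only point that needs care is this second inequality. It is where lower semicontinuity enters, through $g=g^{**}$. Without that hypothesis, $\rec g$ would be governed by $g$ while $\delta^*_C$ only sees the closure $g^{**}$, and the two could differ. The remaining steps are routine bookkeeping with infinite values.
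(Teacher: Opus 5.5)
Your proof is correct. The inequality $\delta^*_C\le\rec g$ follows from Fenchel--Young alone. The reverse inequality follows from $g=g^{**}$ written as a supremum over $C=\dom g^*$, with $C\neq\emptyset$ coming from properness of $g^*$.

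The paper gives no proof of its own and only cites Rockafellar. His argument is the same biconjugation computation: he proves that the support function of $\dom f$ equals $\rec f^*$ for a proper convex $f$ by interchanging suprema, then applies this to $f=g^*$ using $g^{**}=g$. Your two inequalities are exactly that computation, unfolded.
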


Let $g$ be a finite convex function on $N_\bbR$ and let $\sigma$ be a strongly convex rational polyhedral cone in $N_\bbR$. Our main criterion is expressed in terms of the domain of the Legendre transform of $g + \delta_{-\sigma}$. We therefore record several properties of this set for later use.

\begin{theorem} \label{thm:inclusions_between_g_plus_indicator_and_domain_plus_dual_cone}
    Let $\sigma$ be a closed convex cone in $N_\bbR$ and let $g$ be a finite convex function on $N_\bbR$. Then,
    $$
        \dom g^* + \sigma^\vee \subset \dom (g + \delta_{-\sigma})^* \subset \overline{\dom g^* + \sigma^\vee}.
    $$
\end{theorem}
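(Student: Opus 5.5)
The plan is to prove the two inclusions separately. The first follows directly from the definition of the Legendre transform. The second comes from comparing support functions via Theorem~\ref{thm:support_function_and_recession_function}. Throughout, set $h := g + \delta_{-\sigma}$. Since $g$ is finite and convex on $N_\bbR$, it is continuous. Since $-\sigma$ is a nonempty closed convex set, $h$ is a proper lower semicontinuous convex function, and in particular $h(\bfze) = g(\bfze) < \infty$. Also $g$ is proper and lower semicontinuous, so $g^*$ is proper and $\dom g^* \neq \emptyset$.

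\emph{First inclusion.} Write $v = w + m$ with $w \in \dom g^*$ and $m \in \sigma^\vee$. For every $x \in -\sigma$ we have $\inp{m}{x} \le 0$, hence
$$
    h^*(v) = \sup_{x \in -\sigma} \bigl( \inp{w}{x} + \inp{m}{x} - g(x) \bigr) \le \sup_{x \in -\sigma} \bigl( \inp{w}{x} - g(x) \bigr) \le g^*(w) < \infty .
$$
Thus $v \in \dom h^*$.

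\emph{Second inclusion.} By Theorem~\ref{thm:support_function_and_recession_function} applied to $h$, the support function of $\dom h^*$ is $\rec h$. We compute $\rec h$ using the reference point $x_0 = \bfze \in \dom h$. Because $\delta_{-\sigma}(\lambda u) = \delta_{-\sigma}(u)$ for $\lambda > 0$ and $\delta_{-\sigma}(\bfze) = 0$, we get
$$
    \frac{h(\lambda u) - h(\bfze)}{\lambda} = \frac{g(\lambda u) - g(\bfze)}{\lambda} + \delta_{-\sigma}(u).
$$
Letting $\lambda \to \infty$ gives $\rec h = \rec g + \delta_{-\sigma}$.

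Next we compute the support function of $C := \dom g^* + \sigma^\vee$. Support functions are additive under Minkowski sums, so $\delta_C^* = \delta^*_{\dom g^*} + \delta^*_{\sigma^\vee}$. Theorem~\ref{thm:support_function_and_recession_function} applied to $g$ gives $\delta^*_{\dom g^*} = \rec g$. For the cone term, $\delta^*_{\sigma^\vee}(u) = \sup_{m \in \sigma^\vee} \inp{m}{u}$. This equals $0$ if $\inp{m}{u} \le 0$ for all $m \in \sigma^\vee$, and $+\infty$ otherwise. The first condition means $-u \in (\sigma^\vee)^\vee = \sigma$, where bi-duality uses that $\sigma$ is a closed convex cone. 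Hence $\delta^*_{\sigma^\vee} = \delta_{-\sigma}$, and therefore $\delta_C^* = \rec g + \delta_{-\sigma} = \rec h = \delta^*_{\dom h^*}$.

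Both $C$ and $\dom h^*$ are convex, the latter being the domain of a convex function. A nonempty convex set is recovered up to closure by its support function, via $(\delta_C^*)^* = \delta_{\overline{\conv C}}$. Equal support functions therefore give
$$
    \overline{\dom h^*} = \overline{\dom g^* + \sigma^\vee},
$$
and in particular $\dom h^* \subset \overline{\dom g^* + \sigma^\vee}$.

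The only step requiring care is the support-function identity: the additivity $\delta^*_{A+B} = \delta^*_A + \delta^*_B$ (immediate from the definition, with the convention $a + \infty = \infty$), and the duality $(\sigma^\vee)^\vee = \sigma$. Everything else is formal.
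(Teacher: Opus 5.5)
Your proof is correct. The first inclusion is the paper's argument, but for the second inclusion you take a different route.

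The paper invokes the conjugate-of-a-sum formula \cite[Theorem~16.4]{rockafellar1997convex}. This writes $(g+\delta_{-\sigma})^*$ as the lower semicontinuous closure of the infimal convolution of $g^*$ and $\delta_{\sigma^\vee}$. It then observes that this infimal convolution has domain inside $\dom g^* + \sigma^\vee$, and that the domain of a lower semicontinuous hull lies in the closure of the original domain.

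You instead compare support functions. You apply Theorem~\ref{thm:support_function_and_recession_function} to both $g$ and $h$, compute $\rec h = \rec g + \delta_{-\sigma}$, use additivity of support functions under Minkowski sums and $\delta^*_{\sigma^\vee} = \delta_{-\sigma}$ by bipolarity, and invert via $(\delta_C^*)^* = \delta_{\overline{\conv C}}$.

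What your route buys: it uses only tools the paper has already stated and avoids infimal convolution. It also gives the sharper conclusion $\overline{\dom h^*} = \overline{\dom g^* + \sigma^\vee}$. Your computation $\rec h = \rec g + \delta_{-\sigma}$ is the same one the paper performs in Lemma~\ref{lem:strict_recession_inequality_and_domain_of_Legendre_transform}, so your argument ties the theorem and that lemma together.

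What the paper's route buys: it describes $h^*$ itself, not just its support function. Moreover, the relative interiors of $\dom g = N_\bbR$ and of $-\sigma$ intersect. So the qualified form of the same Rockafellar theorem applies, with the closure omitted and the infimum attained, and it gives the exact equality $\dom h^* = \dom g^* + \sigma^\vee$. No support-function argument can detect this, since support functions determine convex sets only up to closure.
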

\begin{proof}
    For $v \in \dom g^*$ and $m \in \sigma^\vee$, observe that $\inp{m}{x}-\delta_{-\sigma}(x) \le 0$. Thus
    $$
        \inp{v+m}{x} - (g(x) + \delta_{-\sigma}(x)) \le \inp{v}{x} - g(x) \le g^*(v) < \infty.
    $$
    For the second inclusion, recall that
    $$
        (g_1 + \cdots + g_p)^* = \operatorname{cl}(g_1^* \,\square\, \cdots \,\square\, g_p^*),
    $$
    where $g_1,\ldots,g_p$ are proper lower semicontinuous convex functions on $N_\bbR$ whose domains have a point in common. Here $\square$ denotes infimal convolution and $\operatorname{cl}$ denotes lower semicontinuous closure (see \cite[Theorem~16.4]{rockafellar1997convex}).
    In our case, we have
    $$
        (g + \delta_{-\sigma})^* = \operatorname{cl}(g^* \,\square\, \delta_{-\sigma}^*) = \operatorname{cl}(g^* \,\square\, \delta_{\sigma^\vee}).
    $$
    The second equality follows from $\delta_{-\sigma}^* = \delta_{\sigma^\vee}$. Since
    $$
        (g^* \,\square\, \delta_{\sigma^\vee}) (v) = \inf_{m \in M_\bbR}(g^*(m) + \delta_{\sigma^\vee}(v-m)),
    $$
    the domain of $g^* \,\square\, \delta_{\sigma^\vee}$ is contained in $\dom g^* + \sigma^\vee$, which implies 
    $$
        \dom (g + \delta_{-\sigma})^* = \dom (\mathrm{cl} (g^* \,\square\, \delta_{\sigma^\vee})) \subset \overline{\dom g^* + \sigma^\vee}.
    $$
\end{proof}

\begin{lemma} \label{lem:strict_recession_inequality_and_domain_of_Legendre_transform}   
    Let $\sigma$ be a closed convex cone in $N_\bbR$ and let $g$ be a finite convex function on $N_\bbR$. Then, the recession function of $g$ determines the closure and the interior of the domain of $(g + \delta_{-\sigma})^*$, that is,
    \begin{gather*}
        \overline{\dom (g+\delta_{-\sigma})^*} = \big\{ m \in M_\bbR \,:\, \text{$\inp{m}{x} \le (\rec g)(x)$ for all $x \in -\sigma$} \big\},\\
        \interior \big( \dom (g+\delta_{-\sigma})^* \big) = \big\{ m \in M_\bbR \,:\, \text{$\inp{m}{x} < (\rec g)(x)$ for all nonzero $x \in -\sigma$} \big\}.
    \end{gather*}
\end{lemma}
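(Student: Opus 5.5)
The plan is to reduce both identities to Theorem~\ref{thm:support_function_and_recession_function} applied to the proper lower semicontinuous convex function $h := g + \delta_{-\sigma}$. It is proper since $\bfze \in -\sigma$. It is lower semicontinuous because $g$ is finite convex, hence continuous, and $-\sigma$ is closed. By the theorem, the support function of $C := \dom h^*$ is $\rec h$. So the first task is to compute $\rec h$.

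Taking the reference point $x_0 = \bfze \in \dom h$, we have for $u \in -\sigma$ that $\lambda u \in -\sigma$ for all $\lambda > 0$. Hence $(\rec h)(u) = \lim_{\lambda\to\infty} (g(\lambda u)-g(\bfze))/\lambda = (\rec g)(u)$. For $u \notin -\sigma$, $\lambda u \notin -\sigma$, so $(\rec h)(u) = \infty$. Thus $\delta_C^* = \rec g + \delta_{-\sigma}$.

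For the closure: since $C$ is convex and nonempty (it contains $\dom g^*$ by Theorem~\ref{thm:inclusions_between_g_plus_indicator_and_domain_plus_dual_cone}; if $\dom g^*$ were empty we would still use properness of $h^*$), we have $\overline{C} = \{ m : \inp{m}{x} \le \delta_C^*(x) \ \forall x\}$. This is the standard separation description, $(\delta_C^*)^* = \delta_{\overline{C}}$. Substituting gives exactly the first identity, because the constraint is vacuous for $x \notin -\sigma$.

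For the interior, I use the fact that for a closed convex set $K = \overline C$ with nonempty interior, $m \in \interior K$ if and only if $\inp{m}{x} < \delta_K^*(x)$ for all $x \neq \bfze$. Moreover $\interior C = \interior \overline C$ by convexity. One direction: if $m + \epsilon B \subset \overline C$, then $\delta^*_C(x) \ge \inp{m}{x} + \epsilon|x|$. The strict inequality is automatic for $x \notin -\sigma$, where the support function is $+\infty$. For the converse I expect the main obstacle. Suppose $m \notin \interior \overline C$. If $\overline C$ has empty interior or $m \notin \overline C$, a (weak) separation gives a nonzero $x$ with $\inp{m}{x} \ge \delta_C^*(x)$, so strictness fails. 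If $m$ is on the boundary, a supporting hyperplane gives a nonzero $x$ with $\inp{m}{x} = \delta_C^*(x)$. In either case that $x$ must lie in $-\sigma$, since otherwise $\delta_C^* = \infty$; so the strict condition fails for some nonzero $x \in -\sigma$. This handles the empty-interior case too, so the separation argument covers everything and yields the second identity.
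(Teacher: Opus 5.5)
Your proposal is correct and follows the paper's proof. Like the paper, you apply Theorem~\ref{thm:support_function_and_recession_function} to $g+\delta_{-\sigma}$, compute its recession function as $\rec g$ on $-\sigma$ and $+\infty$ elsewhere, and read off the closure from $(\delta_C^*)^*=\delta_{\overline C}$. The only difference is that you prove the interior characterization with a supporting-hyperplane argument, which also covers the empty-interior case, whereas the paper just says it ``follows immediately''; that step is the interior criterion of Rockafellar's Theorem~13.1.
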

\begin{proof}
    Write $\tilde g := g + \delta_{-\sigma}$. By Theorem~\ref{thm:support_function_and_recession_function}, the recession function $\rec \tilde g$ of $\tilde g$ coincides with $\delta_{\dom \tilde g^*}^*$, and therefore
    $$
        (\rec \tilde g)^* = \delta_{\overline{\dom \tilde g^*}}.
    $$
    This implies that $m \in \overline{\dom \tilde g^*}$ if and only if
    \begin{equation} \label{ineq:conjugate_of_recession_g}
        \sup_{x \in N_\bbR} \big( \inp{m}{x} - (\rec \tilde g)(x) \big) = 0.
    \end{equation}
    A direct computation shows
    $$
        (\rec \tilde g)(x) =
        \left\{
            \begin{array}{*2{>{\displaystyle}l}}
                (\rec g)(x)     &   \text{if $x \in -\sigma$,} \\
                \infty          &   \text{if $x \not\in -\sigma$.}
            \end{array}
        \right.
    $$
    Thus, $m \in \overline{\dom \tilde g^*}$ if and only if
    $$
        \sup_{x \in -\sigma} \big( \inp{m}{x} - (\rec g)(x) \big) = 0.
    $$
    The equality involving the interior follows immediately.
\end{proof}

\begin{remark}
    The set $\big\{ m \in M_\bbR \,:\, \text{$\inp{m}{x} \le (\rec g)(x)$ for all $x \in -\sigma$} \big\}$ generalizes the \emph{indicator diagram} of a toric psh function in \cite{rashkovskii2012extreme,rashkovskii2013multi}. In that setting, the first equality in the lemma shows that the indicator diagram coincides with the closure of the Newton convex body considered in \cite{guenancia2012toric}. Furthermore, the first equality extends the characterization in \cite[Lemma~1.19]{guenancia2012toric} to closed convex cones.
\end{remark}

The strict inequality involving the recession function in Lemma~\ref{lem:strict_recession_inequality_and_domain_of_Legendre_transform} also characterizes the integrability of an associated exponential weight over $-\sigma$. This criterion generalizes a result of Guenancia. 

\begin{theorem}[cf. {\cite[Proposition~1.9]{guenancia2012toric}}] \label{thm:negative_recession_iff_integrability}
    Let $\sigma$ be a closed convex cone of full dimension in $N_\bbR$, and let $g$ be a finite convex function on $N_\bbR$.
    For $m \in M_\bbR$, define a function $h_m$ on $N_\bbR$ by
    $$
        h_m(x) = \inp{m}{x} - g(x).
    $$
    Then, $\inp{m}{x} < (\rec g)(x)$ for all nonzero $x \in -\sigma$ if and only if
    $$
        \int_{-\sigma} e^{2h_m(x)} \, dx < \infty.
    $$
\end{theorem}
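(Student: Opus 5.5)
We prove the two directions separately. Write $\tilde g := g + \delta_{-\sigma}$, so that $\int_{-\sigma} e^{2h_m}\,dx = \int_{N_\bbR} e^{2(\inp{m}{x}-\tilde g(x))}\,dx$. By Lemma~\ref{lem:strict_recession_inequality_and_domain_of_Legendre_transform}, the strict inequality $\inp{m}{x} < (\rec g)(x)$ for all nonzero $x\in -\sigma$ says exactly that $m \in \interior(\dom \tilde g^*)$. The problem therefore becomes: integrability of $e^{2(\inp{m}{\cdot}-\tilde g)}$ holds if and only if $m$ lies in the interior of $\dom \tilde g^*$. This is Guenancia's criterion, with $\bbR^n$ replaced by the cone $-\sigma$.

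\emph{Sufficiency.} Suppose $m \in \interior(\dom\tilde g^*)$.
1. Choose $\delta>0$ such that the closed ball $\overline B(m,\delta)$ lies in $\dom\tilde g^*$.
2. Since $\tilde g^*$ is convex and finite on a neighbourhood of this ball, it is bounded above there, say by $C$.
3. For every $x\in-\sigma$ and every unit vector $e$, take $v = m+\delta e$. Then
$$\inp{m}{x} - g(x) \le \tilde g^*(v) - \delta\inp{e}{x} \le C - \delta\inp{e}{x}.$$
4. Maximising over $e$ gives $h_m(x) \le C - \delta|x|$ on $-\sigma$.
5. Since $e^{-2\delta|x|}$ is integrable on $N_\bbR$, the integral over $-\sigma$ is finite.

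\emph{Necessity.} Suppose the integral is finite but there is a nonzero $x_0\in-\sigma$ with $\inp{m}{x_0}\ge(\rec g)(x_0)$. We aim to contradict integrability in two steps.
1. \emph{Reduce to a direction with non-negative growth.} Since $\sigma$ has full dimension, we may take $x_0$ in the interior of $-\sigma$. This step needs care. The condition $m \notin \interior(\dom\tilde g^*)$ gives a boundary point, and the set of failing directions might a priori lie only on $\partial(-\sigma)$. To handle this, note that the integrability condition is open in $m$: if the integral is finite for $m$, it stays finite for $m+\eta$ whenever $\inp{\eta}{x}\le 0$ on $-\sigma$. More usefully, if the integral is finite for $m$, then for $m' = m - \epsilon u$ with $u$ in $\interior\sigma^\vee$ we get $\inp{m'}{x} \ge \inp{m}{x}+\epsilon c|x|$ on $-\sigma$, since $\inp{u}{x} \le -c|x|$ there. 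So we should work with nearby $m'$. If the strict inequality fails at $m$, I expect to derive that the non-strict inequality fails at some $m'$ close to $m$, and hence that $m'$ is not in $\overline{\dom \tilde g^*}$. Equivalently, $\inp{m'}{x_0} > (\rec g)(x_0)$ for some $x_0 \in -\sigma$, and by continuity of the positively homogeneous l.s.c.\ function $\rec g$ on the relevant region we can perturb $x_0$ into $\interior(-\sigma)$. I must then check that the integral for $m'$ is still finite. This holds if $m'$ is chosen by adding to $m$ a small element $\eta$ with $\inp{\eta}{x}\le\epsilon'|x|$ and using a tiny loss of exponential decay. This is where the circularity lies, and I expect it to be the main obstacle. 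The cleanest route is probably a direct argument: show that finiteness of the integral at $m$ forces finiteness at $m+\eta$ for all small $\eta$, by convexity of $h_m$.
2. \emph{Obtain a divergent tube.} Once $h_{m'}$ has non-negative growth along a direction $x_0\in\interior(-\sigma)$, concavity gives $h_{m'}(x_1+\lambda x_0)\ge h_{m'}(x_1)-C$ for all $\lambda\ge0$, where $x_1$ is a fixed base point in $\interior(-\sigma)$. Concavity of $h_{m'}$ then gives a uniform lower bound on a small ball around $x_1$. Hence $h_{m'}$ is bounded below on a cylinder $B(x_1,r)+\bbR_{\ge0}x_0$, which lies in $-\sigma$ because $x_0$ and $x_1$ are interior points. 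This cylinder has infinite volume, so the integral diverges, which is the required contradiction.

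For the openness claim used in step 1, I would argue as follows. Since $h_m$ is concave and $e^{2h_m}$ is integrable on the full-dimensional cone, the superlevel sets $\{h_m\ge t\}\cap(-\sigma)$ are convex sets of finite volume. Finite-volume convex sets with nonempty interior are bounded, so $h_m$ tends to $-\infty$ at infinity along every direction, and concavity then forces at least linear decay $h_m(x)\le C-\delta|x|$. Perturbing $m$ by any $\eta$ with $|\eta|<\delta$ keeps the integral finite, and this linear decay already gives the strict recession inequality directly, since $(\rec g)(x) - \inp{m}{x} = \lim_{\lambda\to\infty} -h_m(x_1+\lambda x)/\lambda \ge \delta|x|>0$. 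This makes step 1 unnecessary, and is the version I would actually write.
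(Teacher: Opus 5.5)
Your proposal is correct in the form you say you would actually write. Sufficiency matches the paper in substance; necessity takes a different route.

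\textbf{Sufficiency.} You use Lemma~\ref{lem:strict_recession_inequality_and_domain_of_Legendre_transform} to rewrite the hypothesis as $m\in\interior(\dom\tilde g^*)$. You then read off $h_m(x)\le C-\delta|x|$ on $-\sigma$ from the Fenchel inequality tested at $v=m+\delta e$. The paper instead applies level-coercivity \cite[Theorem~3.26]{rockafellar1998variational} to $F_m=g-\inp{m}{\cdot}+\delta_{-\sigma}$. Since $\dom F_m^*=\dom\tilde g^*-m$, these are the same fact; you simply prove by hand the half of the coercivity theorem that is needed.

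\textbf{Necessity.} The paper argues by contraposition. A direction $x_0\in-\sigma\setminus\{0\}$ with $\inp{m}{x_0}\ge(\rec g)(x_0)$ bounds $h_m$ below on the ray, and concavity spreads this bound to the half-tube $\{\lambda x_0+\frac12 x:\lambda\ge1,\ x\in K\}$, which has infinite volume. You argue directly: integrability forces the closed convex superlevel sets $\{h_m\ge t\}\cap(-\sigma)$ to have finite volume, hence to be bounded. Concavity then gives linear decay, and the strict inequality follows in the limit. The geometric core is the same, since a convex set containing a ball and a ray has infinite volume, and that is how your boundedness claim is proved. Your route yields the quantitative chain ``integrable $\Leftrightarrow$ linear decay $\Leftrightarrow$ strict recession inequality''; the paper's is shorter.

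Two details need tightening.

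First, the boundedness claim requires nonempty interior. Choose $t<\min_{\overline B}h_m$ for a closed ball $\overline B\subset\interior(-\sigma)$; this is where full-dimensionality of $\sigma$ enters. Boundedness of this single superlevel set already gives linear decay, by concavity along segments starting at a base point $x_1\in\overline B$.

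Second, discard your first attempt at necessity. The reduction to an interior direction $x_0$ is neither justified nor needed. If $B(x_1,r)\subset-\sigma$ and $x_0\in-\sigma$, then $B(x_1,r)+\bbR_{\ge0}x_0\subset-\sigma$, because the cone is closed under addition; this is exactly why the paper's tube lies in $-\sigma$. Moreover, $\rec g$ does not depend on the base point, so $h_m(y+\lambda x_0)\ge h_m(y)$ for every $y$ and every $\lambda\ge0$. Your step 2, run with $m$ itself and no perturbation to $m'$, is therefore already a complete proof, essentially the paper's.
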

\begin{proof}    
    Assume that $\inp{m}{x} < (\rec g)(x)$ for all nonzero $x \in -\sigma$. Define an auxiliary function $F_m$ on $N_\bbR$ by
    $$
        F_m(x) = g(x) - \inp{m}{x} + \delta_{-\sigma}(x).
    $$
    The function $F_m$ is a proper lower semicontinuous convex function on $N_\bbR$, and 
    $$
        (\rec F_m)(x) =
        \left\{
            \begin{array}{ll}
                (\rec g)(x) - \inp{m}{x}, & \text{if $x \in -\sigma$,} \\
                +\infty,    & \text{otherwise.}
            \end{array}        
        \right.
    $$
    Therefore, $(\rec F_m)(x) > 0$ for all $x \neq 0$. By \cite[Theorem~3.26]{rockafellar1998variational}, $F_m$ is level-coercive, so there exist $\gamma > 0$ and $C \in \bbR$ such that
    $$
            F_m(x) \ge \gamma \Vert x \Vert - C
    $$
    for all $x \in N_\bbR$. Since $h_m \equiv -F_m$ on $-\sigma$, we have
    \begin{equation} \label{ineq:upper_bound_from_recession}
        h_m(x) \le C - \gamma \Vert x \Vert
    \end{equation}
    for all $x \in -\sigma$. The inequality (\ref{ineq:upper_bound_from_recession}) guarantees that
    $$
        \int_{-\sigma} e^{2h_m(x)} \, dx < \infty.
    $$
    For the converse, suppose that $\inp{m}{x_0} \ge (\rec g)(x_0)$ for some nonzero $x_0 \in -\sigma$. Since the value of the recession function of $g$ is an increasing limit, the function $g - \inp{m}{\cdot}$ has an upper bound along the ray $\bbR_{\ge 0} x_0$. In other words, the function $h_m$ has a lower bound along $\bbR_{\ge 0} x_0$. Let $K$ be a compact subset of $-\sigma$ with nonempty interior. Consider a half-tube $T \subset -\sigma$ defined by
    $$
        T = \Bigl\{ \lambda x_0 + \frac{1}{2}x \,:\, \lambda \ge 1, \, x \in K\Bigr\}.
    $$
    By the concavity of $h_m$,
    $$
        h_m\Bigl(\lambda x_0 + \frac{1}{2}x\Bigr) \ge \frac{1}{2}h_m(2\lambda x_0) + \frac{1}{2}h_m(x).
    $$
    The right-hand side is bounded below uniformly in $\lambda$ and $x \in K$, and thus $e^{2h_m}$ is bounded below by a positive constant on $T$, which has infinite volume. Hence the integral diverges.
\end{proof}

\begin{remark} \label{rem:uniform_coercivity}
    Let $\sigma$ be as in Theorem~\ref{thm:negative_recession_iff_integrability}, and let $g$ be a finite convex function on $N_\bbR \times \bbR^k$.
    Fix $m \in M_\bbR$ and set $h_m (x,y) := \inp{m}{x} - g(x,y)$. As in the theorem, assume that 
    $$
        \inp{m}{x} < (\rec g)(x,0)
    $$
    for all nonzero $x \in -\sigma$. Then, for every compact subset $K \subset \bbR^k$, one can find uniform constants $\gamma > 0$ and $\beta$ such that
    $$
        h_m(x,y) \le \beta - \gamma \Vert x \Vert
    $$
    for all $x \in -\sigma$ and $y \in K$. This follows by applying the coercivity argument in the proof of Theorem~\ref{thm:negative_recession_iff_integrability} to $g(x,y)-\inp{m}{x}+\delta_{-\sigma}(x)+\delta_{\conv K}(y)$. We will use this uniform estimate in the proof of our main results.
\end{remark}

\section{Analysis on toric varieties} \label{sec:analysis_on_toric_varieties}

Let $\sigma$ be a strongly convex rational polyhedral cone in an $r$-dimensional vector space $N_\bbR$. Fix a basis $\{e_1,\ldots,e_r\}$ of $N$ with dual basis $e_1^{*},\ldots,e_r^{*} \in M$. Write $t_j=\chi^{e_j^*}$. Assigning $(t_1,\ldots, t_r)$ to $t \in T_N$ gives an isomorphism $T_N \simeq (\bbC^*)^r$. Define
\[
\Log : T_N \longrightarrow N_\bbR, \quad
\Log(t)=x:=\sum_{j=1}^{r} ( \log|t_j| )\, e_j .
\]
It is worth noting that the point $x=\Log(t)$ is determined, independently of the chosen basis, by
\begin{equation}\label{eqn:log_representation}
\inp{m}{x}=\log | \chi^{m}(t) |,
\end{equation}
for all $m\in M$. Indeed, for $m=\sum_j {m_j e_j^{*}}$, both sides equal $\sum_j {m_j\log|t_j|}$.

\begin{lemma} \label{lem:toric_psh_and_convex_functions}
    Let $\varphi$ be a toric psh function on $X_\sigma$. There exists a unique finite convex function $g: N_\bbR \to \bbR$ such that $g(\Log(z)) = \varphi(z)$ for all $z \in T_N$ and $g$ is increasing in $\sigma$-direction. Conversely, if $h$ is a finite convex function on $N_\bbR$ and increases in $\sigma$-direction, then the function $h \circ \Log$ uniquely extends to a toric psh function on $X_\sigma$.
\end{lemma}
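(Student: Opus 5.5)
First restrict $\varphi$ to the open torus $T_N\simeq(\bbC^*)^r$. Since $\varphi$ is $K_N$-invariant, it depends only on $(|t_1|,\ldots,|t_r|)$, so there is a unique function $g:N_\bbR\to\Rninf$ with $\varphi=g\circ\Log$ on $T_N$. I will show that $g$ is finite and convex by the classical argument for Reinhardt psh functions. For $x,\xi\in N_\bbR$, consider the holomorphic map $\zeta\mapsto \exp(x+\zeta\xi)$ from the complex line into $T_N$. Here $\exp$ denotes the section $x\mapsto (e^{x_1},\ldots,e^{x_r})$ of $\Log$, extended holomorphically in $\zeta$. The function $\zeta\mapsto g(x+(\operatorname{Re}\zeta)\xi)$ is then subharmonic and depends only on $\operatorname{Re}\zeta$. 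A subharmonic function of one real variable is convex, and therefore $g$ is convex on every line. Upper semicontinuity gives measurability, and a convex function that is not identically $-\infty$ is $-\infty$ nowhere. Indeed, if $g(x_0)=-\infty$, convexity along lines forces $g\equiv-\infty$ on an open set, which contradicts the fact that psh functions are $L^1_{\loc}$ on the connected manifold $T_N$. So $g$ is finite and convex. Uniqueness is clear because $\Log$ is surjective.

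Next I show that $g$ is increasing in the $\sigma$-direction, meaning $g(x+v)\le g(x)$ for $v\in-\sigma$. Fix a lattice point $v\in\sigma\cap N$ and the one-parameter subgroup $\lambda_v:\bbC^*\to T_N$. Since $v\in\sigma$, the map $s\mapsto t\cdot\lambda_v(s)$ extends holomorphically to $s=0$ inside $X_\sigma$, because every $\chi^m$ with $m\in\sigma^\vee\cap M$ pulls back to $\chi^m(t)s^{\inp{m}{v}}$ with nonnegative exponent. By Theorem~\ref{thm:psh_iff_subharmonic_along_discs}, the composition $s\mapsto\varphi(t\lambda_v(s))$ is subharmonic on a disk and radial. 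In terms of $g$ it equals $u(\log|s|)=g(\Log t+(\log|s|)v)$, where $u$ is convex on $\bbR$ and bounded above as $\log|s|\to-\infty$, since the function is bounded above near $s=0$. A convex function on $\bbR$ that is bounded above at $-\infty$ is nondecreasing. Hence $g(x-\lambda v)\le g(x)$ for all $\lambda\ge0$. By convexity and continuity of $g$, this extends from rational rays of $\sigma$ to all of $-\sigma$.

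For the converse, let $h$ be finite, convex, and increasing in the $\sigma$-direction. Then $h\circ\Log$ is psh on $T_N$: locally it is $h(\log|t_1|,\ldots)$, and this is psh because a convex function that is increasing in each argument, composed with psh functions, is psh. The general case reduces to this after smoothing $h$ by convolution, since convexity alone suffices in the log-coordinate formulation. It remains to extend across $X_\sigma\setminus T_N$, a nowhere dense analytic subset of the normal, hence locally irreducible, space $X_\sigma$. By Theorem~\ref{thm:extension_of_psh_across_analytic_subset}, it suffices to show that $h\circ\Log$ is locally bounded above near each point $p\in X_\sigma$. This is the step I expect to be the main obstacle. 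My plan for it is as follows. A neighbourhood of $p$ is given by bounds $|\chi^{m_i}|\le C$ for the generators $m_i$ of $\sigma^\vee\cap M$, possibly together with lower bounds for those $m_i$ that do not vanish at $p$. In log coordinates this region is contained in a set of the form $B+(-\sigma')$ for a compact set $B$ and a face-related cone. More simply, it is contained in $\{x:\inp{m_i}{x}\le c\}=c'+(-\sigma)$ plus bounded directions in the lineality part. Because $h$ is decreasing along $-\sigma$ and continuous on compact sets, $h$ is bounded above there by $\max_B h$. The extension is unique by the theorem, and it is $K_N$-invariant because the $\limsup$ formula respects the $K_N$-action. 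This gives a toric psh function on $X_\sigma$.
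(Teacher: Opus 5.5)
Your proposal is correct and follows essentially the paper's route: convexity of $g$ on the torus; monotonicity in the $\sigma$-direction from upper boundedness of $\varphi$ near $X_\sigma\setminus T_N$, where you use one-parameter subgroups and a density argument instead of the paper's compact sets $D_R$ and polyhedra $S_R$; and, for the converse, plurisubharmonicity on $T_N$ followed by local upper boundedness and Theorem~\ref{thm:extension_of_psh_across_analytic_subset}.

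Two small points need tidying.

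First, $h\circ\Log$ is psh on $T_N$ simply because the functions $\inp{m}{\Log(\cdot)}$, $m\in M_\bbR$, are pluriharmonic and $h$ is a supremum of affine functions, as in the paper. The monotone-composition fact you cite is beside the point, since $h$ need not be increasing in each coordinate.

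Second, the equality $\{x:\inp{m_i}{x}\le c\}=c'+(-\sigma)$ is false in general for a single point $c'$. It should be replaced by the Minkowski--Weyl decomposition $S_R=P_R-\sigma$ with $P_R$ a polytope, which gives $\sup_{S_R}h\le\max_{P_R}h$.
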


\begin{proof}
    Let $K_N$ be the maximal compact torus in $T_N$. To prove that $g$ is well-defined, it suffices to show that, for any $z,w \in T_N$, $\Log (z) = \Log (w)$ if and only if there exists $k \in K_N$ such that $w = k \cdot z$. Indeed, by (\ref{eqn:log_representation}), $\Log (z) = \Log (w)$ if and only if $\log |\chi^{m}(z)| = \log |\chi^{m}(w)|$ for all $m \in M$. By the definition of the character, the latter is equivalent to saying that $wz^{-1} \in K_N$. The fact that $g$ is convex follows immediately from log-convexity of the average of $\varphi$.

    Suppose that $m_1,\ldots,m_p$ generate $\sigma^\vee \cap M$. For $R \ge 1$, define
    $$
        D_R = \{z\in X_\sigma \,:\, \text{$|\chi^{m_i}(z)| \le R$ for all $i$} \}, \quad
        S_R = \{x\in N_\bbR \,:\, \text{$\inp{m_i}{x} \le \log R$ for all $i$} \}.
    $$ 
    Being the preimage of a closed polydisk under the closed embedding of $X_\sigma$ into $\bbC^p$, the set $D_R$ is compact. Also, (\ref{eqn:log_representation}) shows that a point $z \in T_N$ belongs to $D_R$ if and only if $\Log(z) \in S_R$. Since $\varphi$ is upper semicontinuous, we have $C_R := \sup_{D_R} \varphi < \infty$, and therefore $g \le C_R$ on $S_R$. Now fix $x \in N_\bbR$ and $v \in \sigma$, and choose $R \ge 1$ so that 
    $$
        \log{R}\ge \max \{0,\inp{m_1}{x},\ldots,\inp{m_p}{x}\}.
    $$
    Since $m_i\in \sigma^\vee$, we have $\inp{m_i}{x - tv} \le \inp{m_i}{x}$ for all $t\ge 0$, so the ray $\{ x - tv \,:\, t \ge 0 \}$ is contained in $S_R$. Hence, $g(x + tv)$ is a finite convex function on $\bbR$ bounded above on $(-\infty, 0]$. Such a function is increasing. In particular, $g(x) \le g(x+v)$.

    Conversely, let $h$ be a finite convex function increasing in $\sigma$-direction and set $\varphi_0 = h \circ \Log$ on $T_N$. The function $\varphi_0$ is a $K_N$-invariant psh function. Indeed, $h$ can be written as
    $$
        h = \sup \{ \ell:N_\bbR \to \bbR \,|\, \text{$\ell$ is affine and $\ell \le h$} \}
    $$
    so that $\varphi_0$ is the upper envelope of a family of psh functions. In particular, $\varphi_0$ is psh.

    Assume that $m_1,\ldots,m_p$ generate $\sigma^\vee \cap M$. For $R \ge 1$, consider $D_R$ and $S_R$ defined above. The recession cone of $S_R$ is $-\sigma$, so the Minkowski--Weyl theorem gives $S_R = P_R - \sigma$ for some polytope $P_R$. Hence
    $$
        \textstyle\sup_{S_R} h \le \mathop{\vphantom{\sup}\max}_{P_R} h < \infty.
    $$
    Each point of $X_\sigma \backslash T_N$ lies in the interior of $D_R$ for sufficiently large $R \ge 1$. Since $\Log(D_R \cap T_N) = S_R$, the above inequality shows that $\varphi_0$ is bounded above on  $D_R \cap  T_N$. Thus $\varphi_0$ is locally bounded above near $X_\sigma \backslash T_N$. Therefore, Theorem~\ref{thm:extension_of_psh_across_analytic_subset} guarantees that $\varphi_0$ extends uniquely to a psh function $\varphi$ on $X_\sigma$. Its uniqueness and the $K_N$-invariance of $\varphi_0$ imply that $\varphi$ is toric.
\end{proof}

\begin{definition}
    Let $\varphi$ be a toric psh function on $X_\sigma$ and $g$ its associated convex function on $N_\bbR$. The {\bf Newton convex body} $P(\varphi)$ of $\varphi$ is defined by
    $$
        P(\varphi) = \{ m \in M_\bbR\,:\, \sup_{x \in -\sigma} \bigl( \inp{m}{x} - g(x) \bigr) < +\infty \}.
    $$
\end{definition}

Note that the Newton convex body $P(\varphi)$ of $\varphi$ need not be closed. However, the boundary of $P(\varphi)$ does not affect whether a (meromorphic) monomial belongs to the multiplier ideal sheaf of $\varphi$. Proposition~\ref{prop:equivalent_def_of_newton_convex_body_and_other_characterization} restates Lemma~\ref{lem:strict_recession_inequality_and_domain_of_Legendre_transform} for $P(\varphi)$.

\begin{proposition} \label{prop:equivalent_def_of_newton_convex_body_and_other_characterization}
    Let $\sigma$ be a strongly convex rational polyhedral cone in $N_\bbR$ and let $\varphi$ be a toric psh function on $X_\sigma$. Denote by $g$ its associated convex function on $N_\bbR$. Then,
    \begin{gather*}
        \overline{P(\varphi)} = \big\{ m \in M_\bbR \,:\, \text{$\inp{m}{x} \le (\rec g)(x)$ for all $x \in -\sigma$} \big\},\\
        \interior \big( P(\varphi) \big) = \big\{ m \in M_\bbR \,:\, \text{$\inp{m}{x} < (\rec g)(x)$ for all nonzero $x \in -\sigma$} \big\}.
    \end{gather*}
\end{proposition}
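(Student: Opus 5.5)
The plan is to identify $P(\varphi)$ with the domain of a Legendre transform, and then quote Lemma~\ref{lem:strict_recession_inequality_and_domain_of_Legendre_transform}. By Lemma~\ref{lem:toric_psh_and_convex_functions}, the associated function $g$ is a finite convex function on $N_\bbR$. The set $\sigma$ is a closed convex cone, so the hypotheses of that lemma are met.

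Set $\tilde g := g + \delta_{-\sigma}$. For every $m \in M_\bbR$,
$$
    \tilde g^*(m) = \sup_{x \in N_\bbR}\bigl(\inp{m}{x} - g(x) - \delta_{-\sigma}(x)\bigr) = \sup_{x \in -\sigma}\bigl(\inp{m}{x} - g(x)\bigr).
$$
The second equality holds because $\delta_{-\sigma} = +\infty$ off $-\sigma$ and $\delta_{-\sigma} = 0$ on $-\sigma$. The function $\tilde g$ is proper, since $0 \in -\sigma$ and $g$ is finite. Comparing with the definition of $P(\varphi)$, we get exactly
$$
    P(\varphi) = \dom \tilde g^* = \dom (g + \delta_{-\sigma})^*.
$$

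Taking closures and interiors of this equality of sets, Lemma~\ref{lem:strict_recession_inequality_and_domain_of_Legendre_transform} yields both displayed formulas word for word. There is no real obstacle here. The only points to check are that $g$ is finite and convex (from Lemma~\ref{lem:toric_psh_and_convex_functions}) and that $\sigma$ is closed; full-dimensionality of $\sigma$ is not needed for this lemma.
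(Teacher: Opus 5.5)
Your proposal is correct and is essentially the paper's argument: the paper states that this proposition simply restates Lemma~\ref{lem:strict_recession_inequality_and_domain_of_Legendre_transform}, via the identification $P(\varphi)=\dom(g+\delta_{-\sigma})^*$, which you verify directly from the definitions. The hypotheses you check are exactly the ones the lemma needs: $g$ is finite and convex, and $\sigma$ is a closed convex cone, with no full-dimensionality required.
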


When $\sigma$ is full-dimensional, An defines the Newton convex body by taking an intersection over the smooth affine charts of a toric resolution \cite[Definition~4.4.8]{an2021Thesis}. The following proposition identifies this construction with the definition above.

\begin{proposition} \label{prop:comparison_with_An}
    Assume that $\sigma$ is full-dimensional. Let $\varphi$ be a toric psh function on $X_\sigma$, and let $\pi : X_\Delta \to X_\sigma$ be a toric resolution obtained by a finite smooth subdivision of $\sigma$, with maximal cones $\sigma_1,\ldots,\sigma_k$. Write
    $$
        \sigma_i = \cone (v_{i,1},\ldots,v_{i,r})
    $$
    for each $i$, where $v_{i,1},\ldots,v_{i,r}$ form a basis of $N$. Denote by $\pi_i : \bbC^r \simeq X_{\sigma_i} \to X_\sigma$ the restriction of $\pi$, and define the coordinate isomorphism $\mu_i : M_\bbR \rightarrow \bbR^r$ by
    $$
        \mu_i(m) = ( \inp{m}{v_{i,1}},\ldots,\inp{m}{v_{i,r}} ).
    $$
    Then
    $$
        P(\varphi)=\bigcap_{i=1}^k\mu_i^{-1}\bigl(P(\varphi\circ\pi_i)\bigr),
    $$
    where the Newton convex body on each smooth chart is defined as in \cite[Definition~4.4.4]{an2021Thesis}. In particular, $P(\varphi)$ agrees with An's Newton convex body and the intersection is independent of the chosen subdivision.
\end{proposition}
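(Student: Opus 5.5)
The plan is to reduce the statement to the obvious decomposition $-\sigma=\bigcup_{i=1}^k(-\sigma_i)$, after identifying the convex function attached to $\varphi\circ\pi_i$ with a linear reparametrization of $g$. Throughout, $g$ denotes the convex function associated to $\varphi$ by Lemma~\ref{lem:toric_psh_and_convex_functions}.

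\textbf{Step 1: the convex function of $\varphi\circ\pi_i$.} On the chart $X_{\sigma_i}\simeq\bbC^r$ the coordinates are $z_j=\chi^{v_{i,j}^*}$, where $v_{i,1}^*,\ldots,v_{i,r}^*\in M$ is the dual basis. Define the linear isomorphism
$$
L_i:\bbR^r\to N_\bbR,\qquad L_i(y)=\sum_j y_j v_{i,j},
$$
so that $L_i$ is the transpose of $\mu_i$ and $\inp{m}{L_i(y)}=\inp{\mu_i(m)}{y}$.

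For $t$ in the torus, (\ref{eqn:log_representation}) gives $\Log(t)=L_i(\log|z_1|,\ldots,\log|z_r|)$, since both sides have the same pairing with every $m$. Hence $\varphi\circ\pi_i$ is a toric psh function on $\bbC^r$. It is psh because it is a pullback under a holomorphic map. It is invariant under $(S^1)^r$ because $\pi_i$ is equivariant and identifies this compact torus with $K_N$.

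Its associated convex function is therefore $g_i:=g\circ L_i$, by the uniqueness part of Lemma~\ref{lem:toric_psh_and_convex_functions}. Moreover $L_i$ maps the negative orthant $-\bbR_{\ge0}^r$ onto $-\sigma_i$.

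\textbf{Step 2: matching definitions.} An's Newton convex body on a smooth chart \cite[Definition~4.4.4]{an2021Thesis} is the Guenancia/Rashkovskii body of a toric psh function on $\bbC^r$. I expect it to agree with our definition for $\sigma=\bbR^r_{\ge0}$, $N=\bbZ^r$, namely
$$
P(\varphi\circ\pi_i)=\{\alpha\in\bbR^r:\ \sup_{y\in-\bbR^r_{\ge0}}\bigl(\inp{\alpha}{y}-g_i(y)\bigr)<\infty\}.
$$
If An's definition is phrased differently (e.g.\ via a Legendre transform or the indicator diagram), I would first check this agreement using Lemma~\ref{lem:strict_recession_inequality_and_domain_of_Legendre_transform}. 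This bookkeeping is the only real obstacle. If the descriptions agree only up to boundary, the comparison would have to be stated for closures and interiors via Proposition~\ref{prop:equivalent_def_of_newton_convex_body_and_other_characterization}.

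\textbf{Step 3: the computation.} By Steps 1 and 2, substituting $x=L_i(y)$ gives
$$
\mu_i(m)\in P(\varphi\circ\pi_i)\iff \sup_{y\in-\bbR^r_{\ge0}}\bigl(\inp{m}{L_i y}-g(L_i y)\bigr)<\infty \iff \sup_{x\in-\sigma_i}\bigl(\inp{m}{x}-g(x)\bigr)<\infty.
$$
Since the $\sigma_i$ subdivide $\sigma$, we have $-\sigma=\bigcup_{i=1}^k(-\sigma_i)$, and this union is finite. A supremum over a finite union is finite exactly when it is finite on each piece. Hence $m\in P(\varphi)$ if and only if $m\in\mu_i^{-1}(P(\varphi\circ\pi_i))$ for every $i$, which is the claimed identity.

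The right-hand side thus equals $P(\varphi)$, which is defined without any reference to the resolution. This shows that the intersection is independent of the chosen subdivision and that it agrees with An's Newton convex body.
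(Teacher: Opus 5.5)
Your proposal is correct and follows essentially the same route as the paper: identify the convex function of $\varphi\circ\pi_i$ as $g\circ L_i$ (the paper's $g\circ A_i$), transport the supremum from $-\bbR^r_{\ge0}$ to $-\sigma_i$, and use the finite union $-\sigma=\bigcup_i(-\sigma_i)$. The one step you flag, matching An's smooth-chart definition with the supremum over $-\bbR^r_{\ge0}$, is handled in the paper in one line by appeal to \cite[Definition~4.4.4]{an2021Thesis} together with the continuity of $g_i$.
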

\begin{proof}
    Let $g$ be the convex function associated to $\varphi$. For each $i$, define the linear transformation $A_i: \bbR^r \to N_\bbR$ by
    $$
        A_i(y) := \sum_{j=1}^r y_j v_{i,j}, \quad y \in \bbR^r.
    $$
    In the logarithmic coordinates of the chart $X_{\sigma_i} \simeq \bbC^r$, the convex function associated to $\varphi\circ\pi_i$ is $g_i=g\circ A_i$. Moreover, $A_i (\bbR_{\ge0}^r) = \sigma_i$ and $\inp{\mu_i(m)}{y}  = \inp{m}{A_i(y)}$. Consequently,
    $$
        \sup_{y \in -\bbR_{\ge 0}^r} (\inp{\mu_i(m)}{y} - g_i(y)) = \sup_{x \in -\sigma_i} (\inp{m}{x}-g(x)).
    $$
    By the smooth-chart definition in \cite[Definition~4.4.4]{an2021Thesis} and the continuity of $g_i$, this value is finite if and only if $\mu_i(m) \in P(\varphi \circ \pi_i)$. Since $\sigma$ is the union of $\sigma_1,\ldots,\sigma_k$,
    $$
        \sup_{x \in -\sigma} (\inp{m}{x}-g(x)) = \max_{1 \le i \le k} \sup_{x \in -\sigma_i}(\inp{m}{x}-g(x)).
    $$
    Thus the conditions on all smooth charts are equivalent to $m \in P(\varphi)$.
\end{proof}

Let $N_\sigma$ be the intersection of $N$ with the $\bbR$-linear span of $\sigma$ in $N_\bbR$. By choosing a splitting of the short exact sequence
$$
    0 \longrightarrow N_\sigma \longrightarrow N \longrightarrow N/N_\sigma \longrightarrow 0,
$$
we obtain a lattice decomposition of $N$:
\begin{equation} \label{eqn:decomposition_of_N}
    N = N_\sigma \oplus N'.
\end{equation}
Accordingly, its dual and their scalar extensions are decomposed as follows:
\begin{equation} \label{eqn:decompositions_of_N_and_M}
\begin{gathered}
    M = M_\sigma \oplus M', \\
    N_\bbR = N_{\sigma, \bbR} \oplus N_\bbR', \\
    M_\bbR = M_{\sigma, \bbR} \oplus M_\bbR'.
\end{gathered}    
\end{equation}

These decompositions give
$$
    X_\sigma = X_{\sigma, N_\sigma} \times T_{N'}.
$$
In addition, the orbit $\orb(\sigma)$ corresponding to $\sigma$ in $N$ is the product of $T_{N'}$ and the unique fixed point $z_0$ of $X_{\sigma, N_\sigma}$, that is,
$$
    \orb(\sigma) = \{z_0\} \times T_{N'}.
$$

The algebraic monomiality of torus-invariant ideals is standard.
For the analytic setting, we record the following elementary
equivariant local-to-global principle and include the proof for
completeness. Here an analytic ideal sheaf $\mfa \subset \cO_{X_\sigma}$ is called $T_N$-invariant if
$$
    \tau_t^* \mfa = \mfa
$$
for every translation $\tau_t : X_\sigma \to X_\sigma$ by $t \in T_N$. The $T_N$-invariance of analytic $\cO_{X_\sigma}$-submodules of $\cM_{X_\sigma}$ is defined similarly.

\begin{theorem} \label{thm:local_to_global}
    Let $\mfa$ be a coherent $T_N$-invariant analytic ideal sheaf of $\cO_{X_\sigma}$ and let $z$ be a point in $\orb(\sigma)$. Suppose that the stalk $\mfa_z$ is generated by the germs  at $z$ of characters $\chi^{m_1},\ldots,\chi^{m_s}$, that are holomorphic on a neighborhood of $z$, where $m_i \in M$. Then each $\chi^{m_i}$ extends to a global holomorphic monomial on $X_\sigma$, and $\mfa$ is globally generated by $\chi^{m_1},\ldots,\chi^{m_s}$.
\end{theorem}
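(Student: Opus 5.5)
First I would show that each $\chi^{m_i}$ is in fact a global holomorphic function on $X_\sigma$, i.e.\ $m_i\in\sigma^\vee\cap M$. The character $\chi^{m_i}$ is a priori a meromorphic function on $X_\sigma$, regular on $T_N$, with divisor $\sum_\rho \inp{m_i}{v_\rho}D_\rho$. Since $z\in\orb(\sigma)$ lies in the closure of $\orb(\rho)$ for every ray $\rho\in\sigma(1)$, every $D_\rho$ passes through $z$. Holomorphy of $\chi^{m_i}$ near $z$ therefore forces $\inp{m_i}{v_\rho}\ge 0$ for all $\rho\in\sigma(1)$. Hence $m_i\in\sigma^\vee$, and $\chi^{m_i}\in\bbC[\sigma^\vee\cap M]$ is a global holomorphic monomial.

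Next, let $\mfb:=\chi^{m_1}\cO_{X_\sigma}+\cdots+\chi^{m_s}\cO_{X_\sigma}\subset\mfa$. Both $\mfa$ and $\mfb$ are coherent, and $\mfb$ is $T_N$-invariant because $\tau_t^*\chi^m=\chi^m(t)\chi^m$ with $\chi^m(t)\neq0$. By hypothesis $\mfb_z=\mfa_z$. The quotient $\mathcal Q:=\mfa/\mfb$ is then a coherent $T_N$-invariant sheaf whose stalk vanishes at $z$. Its support $\Supp\mathcal Q$ is therefore a closed analytic, $T_N$-invariant subset of $X_\sigma$ not containing $z$. Being $T_N$-invariant, it is a union of orbits. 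It also misses the whole orbit $\orb(\sigma)=T_N\cdot z$, since translation carries stalks isomorphically.

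The key step, and the one I expect to be the main obstacle, is to show that a closed $T_N$-invariant subset $Z\subset X_\sigma$ disjoint from $\orb(\sigma)$ is empty. By the orbit decomposition $X_\sigma=\coprod_{\tau<\sigma}\orb(\tau)$, $Z$ is a union of orbits $\orb(\tau)$. Since $Z$ is closed, it contains $\overline{\orb(\tau)}$ for each such $\tau$. By Proposition (3) of the orbit--cone correspondence, $\orb(\sigma)\subset\overline{\orb(\tau)}$ for every face $\tau$ of $\sigma$. So any nonempty $Z$ meets $\orb(\sigma)$, which is a contradiction; hence $Z=\emptyset$. One must check that this closure statement holds in the analytic topology. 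It does, since the orbit closures are the same for the Zariski and Euclidean topologies on constructible sets; alternatively, one can use the explicit one-parameter subgroup $\lambda_v(s)\cdot w\to$ a point of $\orb(\sigma)$ as $s\to0$ for $v$ in the relative interior of $\sigma$. This limit argument is the concrete route I would write out: for $w\in\orb(\tau)$, $\chi^m(\lambda_v(s)w)=s^{\inp{m}{v}}\chi^m(w)$. This expression tends to $0$ when $\inp{m}{v}>0$ and stays constant when $m\in\sigma^\perp$, giving a limit in $\orb(\sigma)$.

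Consequently $\mathcal Q=0$, so $\mfa=\mfb$ on all of $X_\sigma$, i.e.\ $\mfa$ is globally generated by $\chi^{m_1},\ldots,\chi^{m_s}$. Beyond the closure step, the only other point needing care is the $T_N$-invariance of $\Supp\mathcal Q$. This follows from $\tau_t^*\mfa=\mfa$ and $\tau_t^*\mfb=\mfb$, which give $\mathcal Q_{t\cdot x}\simeq\mathcal Q_x$.
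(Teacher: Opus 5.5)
Your argument is correct in substance and is essentially the paper's. The first step, reading off $\inp{m_i}{v_\rho}\ge 0$ from the fact that every $D_\rho$ passes through $z$, is identical. Your second step uses the same orbit-closure fact, $\orb(\sigma)\subset\overline{\orb(\tau)}$ for every face $\tau$, that the paper uses. You use it to show the closed $T_N$-invariant set $\Supp\mathcal{Q}$ must meet $\orb(\sigma)$. The paper uses it to show that the translates $t\cdot U$ of a neighborhood $U$ of $z$ with $\mfa|_U=\mathfrak{b}|_U$ cover $X_\sigma$. The complement of your support is just the largest invariant open set of that kind, so the two are the same argument in different language.

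One small point needs repair. You write $\mathfrak{b}\subset\mfa$ globally, but the hypothesis only gives $(\chi^{m_i})_z\in\mfa_z$. That yields $\chi^{m_i}\in\Gamma(U,\mfa)$ only on some neighborhood $U$ of $z$, so $\mfa/\mathfrak{b}$ is not yet defined on all of $X_\sigma$. The fix is to apply your support argument to the two coherent $T_N$-invariant sheaves $(\mfa+\mathfrak{b})/\mfa$ and $(\mfa+\mathfrak{b})/\mathfrak{b}$. Both have zero stalk at $z$, so both vanish, which gives $\mfa=\mfa+\mathfrak{b}=\mathfrak{b}$. The paper avoids the issue by using only the equality near $z$ and then translating.

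Your explicit one-parameter-subgroup limit is a useful addition. It justifies taking the orbit closure in the Euclidean topology, which the paper uses without comment when it concludes that $T_N\cdot w$ meets the open set $U$.
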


\begin{proof}
    For every ray $\rho$ of $\sigma$, the orbit $\orb(\sigma)$ is contained in the invariant prime divisor $D_\rho$. Since $\chi^{m_i}$ is holomorphic near $z$, it has no pole along $D_\rho$, and hence
    $$
        \inp{m_i}{v_\rho} = \ord_{D_\rho}(\chi^{m_i}) \ge 0,
    $$
    where $v_\rho$ is the primitive generator of $\rho$. Consequently, $m_i \in \sigma^\vee \cap M$, so $\chi^{m_i}$ extends to a global holomorphic monomial on $X_\sigma$.

    Let $\mathfrak{b} \subset \cO_{X_\sigma}$ be the ideal sheaf globally generated by $\chi^{m_1},\ldots,\chi^{m_s}$. Since $\mfa_z=\mathfrak{b}_z$ and both ideal sheaves are coherent, there is an open neighborhood $U$ of $z$ such that
    $$
        \mfa|_U=\mathfrak{b}|_U.
    $$
    Both $\mfa$ and $\mathfrak{b}$ are $T_N$-invariant. Therefore, the same equality holds on every translation $t \cdot U$ for $t \in T_N$.

    Now, it is enough to show that the translations of $U$ cover $X_\sigma$. Indeed, if $w \in \orb(\tau)$ for a face $\tau$ of $\sigma$, then
    $$
        \orb(\sigma) \subset \overline{\orb(\tau)} = \overline{T_N \cdot w}.
    $$
    Since $z \in U \cap \overline{T_N \cdot w}$, the orbit $T_N \cdot w$ meets $U$. Thus, $w$ belongs to some translate of $U$, and hence $X_\sigma=T_N \cdot U$. It follows that $\mfa=\mathfrak{b}$ on $X_\sigma$.
\end{proof}

Before proving a stronger result, we recall a standard topological fact about coherent analytic sheaves. We refer to \cite{bruhat1958lectures,grauert2004theory} for details in what follows. For every open subset $U \subset X_\sigma$, the spaces of sections of coherent analytic sheaves on $U$ carry their natural Fr\'echet topologies, and morphisms between coherent analytic sheaves induce continuous linear maps on their spaces of sections. In particular, the canonical projection from $\cO_{X_\sigma}$ onto $\cO_{X_\sigma}/\mfa$ induces a continuous linear map
$$
    \operatorname{pr} : \Gamma(U, \cO_{X_\sigma}) \longrightarrow \Gamma(U, \cO_{X_\sigma} / \mfa)
$$
with $\ker \operatorname{pr} = \Gamma (U,\mfa)$. Let $K_N \subset T_N$ be the maximal compact torus, equipped with its normalized Haar measure $dk$. If $F: K_N \to \cO_{X_\sigma}(U)$ is continuous, then its vector-valued integral is well-defined and
$$
    \operatorname{pr} \left( \int_{K_N} F(k) \, dk \right) = \int_{K_N} \operatorname{pr} ( F(k) ) \, dk.
$$

\begin{lemma}\label{lem:invariant_analytic_ideal_is_monomial}
    Let $\mfa$ be a coherent $T_N$-invariant analytic ideal sheaf on $X_\sigma$, and let $z\in\orb(\sigma)$. Then $\mfa_z$ is generated by finitely many germs of characters that are holomorphic near $z$.
\end{lemma}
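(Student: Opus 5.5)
The plan is to average over the compact torus. The Fourier projections onto characters will then show that each germ in $\mfa_z$ is a convergent sum of monomial pieces, each of which already lies in $\mfa$. Combining this with Noetherianity of $\cO_{X_\sigma,z}$ and Krull's intersection theorem produces finitely many character generators.

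\emph{Setting up a $K_N$-stable neighborhood.} Using the decomposition $X_\sigma = X_{\sigma,N_\sigma}\times T_{N'}$, write $z=(z_0,t_0)$. Take $U = V\times W$, where:
\begin{itemize}
\item $V = \{|\chi^{m_i}|<\epsilon\}\subset X_{\sigma,N_\sigma}$ is a $K_{N_\sigma}$-stable Stein neighborhood of $z_0$;
\item $W$ is a $K_{N'}$-stable Reinhardt-type neighborhood of $t_0$, namely an annular polyshell in $T_{N'}\simeq(\bbC^*)^{r'}$.
\end{itemize}
Then $U$ is $K_N$-stable, Stein, and every holomorphic $f$ on $U$ has a Laurent-type expansion $f=\sum_{m} c_m\chi^m$. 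Here $m$ ranges over $(\sigma^\vee\cap M_\sigma)\oplus M'$ after a suitable identification, and the expansion converges in the Fréchet topology. The coefficient pieces are given by the projections
$$
    \pi_m(f)=\int_{K_N}\chi^m(k)^{-1}\,(k^*f)\,dk = c_m\chi^m .
$$
If $f\in\Gamma(U,\mfa)$, then $k^*f\in\Gamma(U,\mfa)$ by $T_N$-invariance, and the map $k\mapsto k^*f$ is continuous into $\cO(U)$. The displayed identity $\operatorname{pr}\bigl(\int F\bigr)=\int\operatorname{pr}(F)$ therefore gives $\operatorname{pr}(\pi_m f)=0$, that is, $c_m\chi^m\in\Gamma(U,\mfa)$.

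\emph{Generating the stalk.} Let $\mathfrak b_z\subset\mfa_z$ be the ideal generated by all germs $\chi^m$ with $\chi^m\in\Gamma(U,\mfa)$. By Noetherianity, it is generated by finitely many of them, say $\chi^{m_1},\dots,\chi^{m_s}$. To show $\mathfrak b_z=\mfa_z$, first shrink $U$ so that a finite set of generators of $\mfa_z$ are sections on $U$. By Cartan's Theorem A applied to the $K_N$-stable Stein set $U$, any $f\in\mfa_z$ can be written as a section of $\mfa$ on $U$. Let $\mathfrak m$ be the maximal ideal at $z$.

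Since $t_0$ is not a fixed point of $K_{N'}$, I will instead move $z$ to $z_0\times\{1\}$, using $T_N$-invariance. I then work with the truncated expansion: group the terms by their $M_\sigma$-component. For each $k$, the part of $f$ of $\mathfrak m$-adic order below $k$ in the $X_{\sigma,N_\sigma}$-direction is a finite sum over $m_\sigma$ of expressions $\chi^{m_\sigma}\cdot h(t)$ with $h$ holomorphic on $W$.

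The cleaner route here is to project only with $K_{N_\sigma}$. This shows that each $M_\sigma$-isotypic piece $\chi^{m_\sigma}h_{m_\sigma}(t)$ lies in $\mfa$. Moreover, $\chi^{m_\sigma}h\in\mfa$ for all such $h$ as soon as it holds for one $h$ nonvanishing at $t_0$, because of the invariance in the $T_{N'}$-directions. Hence $f\in\mathfrak b_z+\mathfrak m^k\mfa_z$ for every $k$ (Artin–Rees gives the factor $\mfa_z$), and Krull's intersection theorem yields $f\in\mathfrak b_z$.

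\emph{Main obstacle.} The delicate step is the $T_{N'}$-direction. There $z$ is not a $K_N$-fixed point, so full $K_N$-averaging does not localize at $z$. I expect to handle it either by restricting the averaging to $K_{N_\sigma}$ as above, combined with the observation that a $T_{N'}$-invariant coherent ideal on $V\times T_{N'}$ is pulled back from $V$, or by passing to the quotient by $T_{N'}$. Justifying this observation carefully, via coherence and translation invariance of the annihilator/Fitting data along $T_{N'}$, is where most of the work lies.
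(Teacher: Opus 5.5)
Your strategy (projection onto characters, then Noetherianity and Krull) is the right kind of argument. As written, however, it is incomplete at exactly the step you flag, and a second step is asserted without justification.

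First, you claim that each $M_\sigma$-isotypic piece $\chi^{m_\sigma}h_{m_\sigma}(t)$ forces $\chi^{m_\sigma}$ into $\mfa_z$, but you state this only for an $h$ nonvanishing at $t_0$. In that case it is trivial, since $h$ is a unit. The pieces you actually obtain may have $h_{m_\sigma}(t_0)=0$. Moving $z$ to $(z_0,1)$ changes nothing, because $K_{N'}$ acts freely on $T_{N'}$. Your fallback, that a $T_{N'}$-invariant coherent ideal on $V\times T_{N'}$ is pulled back from $V$, is left unproved.

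Second, the inclusion $f\in\mathfrak b_z+\mathfrak m^k$ requires that the infinite tail $\sum\chi^{m_\sigma}h_{m_\sigma}(t)$, taken over exponents of order $\ge k$, defines a germ lying in $\mathfrak m^k$. This is a division/convergence statement, or an appeal to closedness of sections of a coherent subsheaf in the Fr\'echet topology, and you do not supply it. Artin--Rees plays no role here: Krull's theorem for $R/\mathfrak b_z$ already gives $\bigcap_k(\mathfrak b_z+\mathfrak m^k)=\mathfrak b_z$.

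The obstacle is in fact illusory, and both gaps can be closed.

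\emph{Within your own setup.} Your full $K_N$-projection on the $K_N$-stable set $U$ already gives $c_{(m_\sigma,m')}\chi^{m_\sigma+m'}\in\Gamma(U,\mfa)$. Since $\chi^{m'}$ is a unit on $U\subset X_{\sigma,N_\sigma}\times T_{N'}$, you get $\chi^{m_\sigma}\in\Gamma(U,\mfa)$ as soon as $h_{m_\sigma}\not\equiv 0$. No statement about pulled-back ideals is needed.

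\emph{The paper's proof.} It avoids both issues at once.
\begin{itemize}
\item It takes $f\in\Gamma(X_\sigma,\mfa)$, using Cartan A on the Stein space $X_\sigma$.
\item It lets $J\subset R=\cO_{X_\sigma,z}$ be generated by the germs of all projections $P_m(f)$. Each of these is a constant multiple of a character and lies in $\Gamma(X_\sigma,\mfa)$.
\item For each $\ell$, it considers the continuous map $k\mapsto q_\ell(f\circ\tau_k)$ into the finite-dimensional space $R/(J+\mathfrak m_z^\ell)$. Its Fourier coefficients are $q_\ell(P_m(f))=0$, so it vanishes identically.
\item Evaluating at $k=1$ gives $f_z\in J+\mathfrak m_z^\ell$, and Krull finishes.
\end{itemize}
Nothing in this argument requires $K_N$ to fix $z$. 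It needs no product decomposition, explicit Laurent expansion, or tail estimate. The same finite-dimensional trick would also work verbatim on your $K_N$-stable $U$, and it would replace your truncation step.
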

\begin{proof}
    Since $X_\sigma$ is Stein, Cartan's theorem A implies that $\mfa_z$ is generated by the germs of global sections of $\mfa$. We will show that these germs belong to an ideal generated by monomial germs.

    For $f \in \Gamma(X_\sigma,\mfa)$ and $m \in M$, define
    $$
        P_m(f)(x) := \int_{K_N} \chi^{-m}(k) f(k \cdot x) \, dk.
    $$
    The composition $f \circ \tau_k$ depends continuously on $k \in K_N$ in the natural Fr\'echet topology. Since $\mfa$ is $T_N$-invariant, the continuity of the projection described above gives $P_m(f) \in \Gamma(X_\sigma,\mfa)$. Moreover, for all $k \in K_N$ and $x \in X_\sigma$, we have
    $$
        P_m(f)(k \cdot x) = \chi^m(k) P_m(f)(x).
    $$
    On $T_N$, the holomorphic function $P_m(f)/\chi^m$ is therefore $K_N$-invariant and hence constant. Consequently, every nonzero $P_m(f)$ is a constant multiple of a character $\chi^m$. Since $P_m(f)$ is holomorphic on $X_\sigma$, such an $m$ belongs to $\sigma^\vee \cap M$.

    Set $R := \cO_{X_\sigma,z}$, and let $\mathfrak{m}_z$ be its maximal ideal. Let $J \subset R$ be the ideal generated by the germs $(P_m(f))_z$ for all $f \in \Gamma(X_\sigma,\mfa)$ and $m \in M$. Then $J \subset \mfa_z$, and $J$ is generated by monomial germs. For each integer $\ell \ge 1$, consider the finite-dimensional vector space
    $$
        E_\ell := R/(J+\mathfrak{m}_z^\ell)
    $$
    and the natural map $q_\ell : \Gamma(X_\sigma,\cO_{X_\sigma}) \to E_\ell$. This map is continuous because it factors through the finite jet space $R/\mathfrak{m}_z^\ell$. For $f \in \Gamma(X_\sigma,\mfa)$, define the continuous function
    $$
        F_\ell : K_N \longrightarrow E_\ell,
        \qquad F_\ell(k) := q_\ell(f \circ \tau_k).
    $$
    Every Fourier coefficient of $F_\ell$ vanishes, since, for all $m \in M$,
    $$
        \int_{K_N} \chi^{-m}(k) F_\ell(k) \, dk
        = q_\ell(P_m(f)) = 0.
    $$
    Applying uniqueness of Fourier coefficients to each coordinate of $E_\ell$, we obtain $F_\ell=0$. Evaluating at the identity of $K_N$ yields $f_z \in J+\mathfrak{m}_z^\ell$. As this holds for every $\ell \ge 1$, Krull's intersection theorem applied to $R/J$ gives
    $$
        f_z \in \bigcap_{\ell \ge 1}(J+\mathfrak{m}_z^\ell)=J.
    $$
    Thus the germs of all global sections of $\mfa$ belong to $J$, and Cartan's theorem A gives $\mfa_z=J$. Finally, since $R$ is Noetherian, finitely many of the monomial germs defining $J$ generate $\mfa_z$.
\end{proof}

As a direct consequence, we have the following result that is stronger than Theorem~\ref{thm:local_to_global}.

\begin{corollary}\label{cor:global_monomial_generation}
    Every coherent $T_N$-invariant analytic ideal sheaf $\mfa \subset \cO_{X_\sigma}$ is globally generated by finitely many monomials. More precisely, there exist $m_1,\ldots,m_s \in \sigma^\vee \cap M$ such that
    $$
        \mfa = \chi^{m_1} \cO_{X_\sigma} + \cdots + \chi^{m_s} \cO_{X_\sigma}.
    $$
\end{corollary}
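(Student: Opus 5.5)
The plan is to combine Lemma~\ref{lem:invariant_analytic_ideal_is_monomial} at a single point of the closed orbit with the equivariant local-to-global principle of Theorem~\ref{thm:local_to_global}.

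First, fix any point $z \in \orb(\sigma)$; this orbit is nonempty, since it is the unique closed $T_N$-orbit of $X_\sigma$. By Lemma~\ref{lem:invariant_analytic_ideal_is_monomial}, the stalk $\mfa_z$ is generated by finitely many germs of characters $\chi^{m_1},\ldots,\chi^{m_s}$ with $m_i \in M$, each holomorphic near $z$. If $\mfa = 0$, the statement holds trivially with $s=0$, and the lemma still applies formally.

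Second, apply Theorem~\ref{thm:local_to_global} to these generators. It gives $m_i \in \sigma^\vee \cap M$ (because $\ord_{D_\rho}(\chi^{m_i}) \ge 0$ for all rays $\rho$ of $\sigma$), so each $\chi^{m_i}$ is a global holomorphic monomial. It also gives the equality $\mfa = \chi^{m_1}\cO_{X_\sigma} + \cdots + \chi^{m_s}\cO_{X_\sigma}$ on all of $X_\sigma$. The underlying mechanism is coherence, which propagates the equality of stalks at $z$ to a neighborhood $U$; $T_N$-invariance then transports it to every translate $t\cdot U$, and these translates cover $X_\sigma$ because $\orb(\sigma)$ lies in the closure of every orbit.

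Since both ingredients are already established, no step is a genuine obstacle. The only point to check is that the hypotheses of Theorem~\ref{thm:local_to_global} match the output of the lemma exactly: the lemma provides characters holomorphic near $z$ that generate $\mfa_z$, and this is precisely what the theorem requires.
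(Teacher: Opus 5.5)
Your proposal is correct and matches the paper's approach: the paper presents this corollary as a direct consequence of applying Lemma~\ref{lem:invariant_analytic_ideal_is_monomial} at a point $z \in \orb(\sigma)$ and then invoking Theorem~\ref{thm:local_to_global}. Your handling of the zero ideal via the empty generating set ($s=0$) is also consistent with both results.
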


The same holds for coherent $T_N$-invariant $\cO_{X_\sigma}$-submodules of $\cM_{X_\sigma}$, with meromorphic monomials as generators.

\begin{corollary} \label{cor:global_meromorphic_monomial_generation}
    Every coherent $T_N$-invariant analytic $\cO_{X_\sigma}$-submodule $\cF \subset \cM_{X_\sigma}$ is globally generated by finitely many meromorphic monomials.
\end{corollary}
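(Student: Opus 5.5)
The plan is to reduce to Corollary~\ref{cor:global_monomial_generation} by clearing denominators with a single global monomial. Let $\cF \subset \cM_{X_\sigma}$ be coherent and $T_N$-invariant.

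The first step is to find one monomial that clears all denominators of $\cF$. Fix $z \in \orb(\sigma)$. Since $\cF$ is coherent, the stalk $\cF_z$ is generated by finitely many germs $f_1,\ldots,f_k \in \cM_{X_\sigma,z}$. Write each as a quotient of holomorphic germs and take $h$ to be the product of the denominators. On a neighborhood $U$ of $z$ we then have $h\cF|_U \subset \cO_U$, by coherence, after shrinking $U$ so that the generators generate nearby.

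Next we replace $h$ by a monomial. The polar divisor of $\cF$ near $z$ is contained in the zero set of $h$. This zero set need not be torus-invariant, so instead we use an invariance argument on the divisor of poles. For $f \in \Gamma(U,\cF)$, the translates $f\circ\tau_t$ lie in $\cF$ on translated sets. Hence the polar divisor $P$ of $\cF$, defined as the supremum of the polar parts of its local sections, is $T_N$-invariant. It is locally of finite order by coherence, so it is a finite combination $\sum_\rho c_\rho D_\rho$ with $c_\rho \in \bbZ_{\ge 0}$.

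We then choose $m_0 \in \sigma^\vee\cap M$ with $\inp{m_0}{v_\rho} \ge c_\rho$ for all $\rho\in\sigma(1)$. This is possible since $\sigma$ is strongly convex, so $\sigma^\vee$ has nonempty interior and a large multiple of an interior lattice point works. By normality of $X_\sigma$, a meromorphic germ whose divisor is $\ge 0$ is holomorphic. It follows that $\mfa := \chi^{m_0}\cF \subset \cO_{X_\sigma}$.

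The second step is to check that $\mfa$ satisfies the hypotheses of Corollary~\ref{cor:global_monomial_generation}. It is coherent, being isomorphic to $\cF$ via multiplication by the unit $\chi^{m_0}$ of $\cM_{X_\sigma}$. It is $T_N$-invariant, since $\tau_t^*\chi^{m_0} = \chi^{m_0}(t)\chi^{m_0}$ differs from $\chi^{m_0}$ by a nonzero constant. Corollary~\ref{cor:global_monomial_generation} therefore gives $\mfa = \sum_i \chi^{m_i}\cO_{X_\sigma}$. Consequently $\cF = \sum_i \chi^{m_i - m_0}\cO_{X_\sigma}$, which is generated by the meromorphic monomials $\chi^{m_i-m_0}$ with $m_i - m_0 \in M$.

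The main obstacle is the invariance of the polar divisor, which must be made rigorous. The plan is to define $c_\rho$ as the maximum of $-\ord_{D_\rho}(f)$ over local sections $f$ of $\cF$ near generic points of $D_\rho$. Since $\ord_{D_\rho}(f\circ\tau_t) = \ord_{D_\rho}(f)$ and the translates of a neighborhood of a point of $\orb(\rho)$ cover $\orb(\rho)$, this number is the same at every point of $\orb(\rho)$. It is finite, because near $z$ the finitely many generators bound the pole orders. Any prime divisor other than the $D_\rho$ meets $T_N$, and there $\cF$ is invariant and hence has a constant pole order along the orbit of a generic point. A prime divisor meeting $T_N$ is not $T_N$-invariant, so this pole order must be $0$; otherwise the poles of $\cF$ would fill out an open subset of $T_N$, contradicting local finiteness. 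This shows that $\Supp P \subset \bigcup_\rho D_\rho$, and with this the argument above applies.
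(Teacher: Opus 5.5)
Your proof is correct, but it clears denominators by a different mechanism than the paper.

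\textbf{The paper's route.} The paper never works with divisors. It forms the ideal quotient $\mathfrak{d}=(\cO_{X_\sigma}:\cF)$ and shows, via the local kernel description $\cO_U\to(\cO_U/h\cO_U)^{\oplus l}$, that $\mathfrak{d}$ is a coherent ideal sheaf with nonzero stalks. It observes that $\mathfrak{d}$ inherits $T_N$-invariance from $\cF$, and applies Corollary~\ref{cor:global_monomial_generation} to $\mathfrak{d}$ itself. Any monomial generator $\chi^{m_0}$ of $\mathfrak{d}$ is then a global section with $\chi^{m_0}\cF\subset\cO_{X_\sigma}$. The corollary is applied a second time to $\chi^{m_0}\cF$, exactly as in your second step.

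\textbf{Your route.} You construct $\chi^{m_0}$ by hand. The polar divisor of $\cF$ is a locally finite, $T_N$-invariant Weil divisor. It is therefore supported on $\bigcup_\rho D_\rho$, since poles along a prime divisor meeting $T_N$ would propagate to all its translates, and these cover $T_N$. Normality of $X_\sigma$ then turns the inequality $\divisor(\chi^{m_0})\ge P$ into holomorphy of $\chi^{m_0}\cF$.

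\textbf{What each buys.} The paper's argument is shorter and needs neither normality nor any bookkeeping of pole orders. Its device $\mathfrak{d}$ is also reused verbatim in Theorem~\ref{thm:compact_torus_invariant_imply_torus_invariant}, where only $K_N$-invariance is available. Your argument is more geometric and actually identifies the monomials in $\mathfrak{d}$: they are the $\chi^{m_0}$ with $m_0\in\sigma^\vee\cap M$ and $\inp{m_0}{v_\rho}\ge c_\rho$ for all $\rho\in\sigma(1)$. The cost is checking that the pole order along each prime divisor is constant on its regular part away from $X_\sigma^{\mathrm{sing}}$ and locally bounded. You handle this adequately; your germ-wise denominator $h$ is needed only for this local finiteness.
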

\begin{proof}
    Assume that $\cF$ is nonzero and denote by $\mathfrak{d}$ the sheaf of ideal quotients $(\cO_{X_\sigma}:\cF)$, which is defined by
    $$
        \mathfrak{d}(U) = \{ a \in \cO_{X_\sigma}(U) \,:\, a \cF|_U \subset \cO_U \}
    $$
    for every open subset $U \subset X_\sigma$.

    We first verify that $\mathfrak{d}$ is a coherent ideal sheaf with nonzero stalks. Since $\cF$ is coherent, it has locally finitely many meromorphic generators and we may write them as $g_1/h,\ldots, g_l/h$ for holomorphic functions $g_1,\ldots, g_l$ and a nonzero holomorphic function $h$. On such a neighborhood $U$, the sheaf $\mathfrak{d}|_U$ is the kernel of the morphism
    $$
        \cO_U \longrightarrow (\cO_U / h\cO_U)^{\oplus l}
    $$
    whose $i$-th component is multiplication by $g_i$ followed by the quotient map from $\cO_U$ to $\cO_U / h\cO_U$. Hence $\mathfrak{d}$ is coherent, and the inclusion $h \cO_U \subset \mathfrak{d}|_U$ shows that the stalks of $\mathfrak{d}$ are nonzero.

    The $T_N$-invariance of $\cF$ implies that $\mathfrak{d}$ is also $T_N$-invariant. By Corollary~\ref{cor:global_monomial_generation}, $\mathfrak{d}$ is globally generated by monomials. Choose a monomial generator $\chi^{m_0}$ of $\mathfrak{d}$, where $m_0 \in \sigma^\vee \cap M$. Then
    $$
        \chi^{m_0} \cF \subset \cO_{X_\sigma}
    $$
    is a coherent analytic ideal sheaf. Since $\chi^{m_0}\cF$ is $T_N$-invariant, by Corollary~\ref{cor:global_monomial_generation} again, we obtain $m_1,\ldots, m_s \in \sigma^\vee \cap M$ such that
    $$
        \chi^{m_0}\cF = \chi^{m_1}\cO_{X_\sigma} + \cdots + \chi^{m_s}\cO_{X_\sigma}.
    $$
    Consequently, $\cF$ is globally generated by the monomials $\chi^{m_1-m_0}, \ldots, \chi^{m_s-m_0}$.
\end{proof}

The following theorem shows that the $T_N$-invariance assumption in the preceding monomial generation results can be weakened to $K_N$-invariance.

\begin{theorem} \label{thm:compact_torus_invariant_imply_torus_invariant}
    Let $\cF \subset \cM_{X_\sigma}$ be a coherent analytic $\cO_{X_\sigma}$-submodule. If $\cF$ is $K_N$-invariant, then it is $T_N$-invariant.
\end{theorem}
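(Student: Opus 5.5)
We first reduce to the ideal case, as in the proof of Corollary~\ref{cor:global_meromorphic_monomial_generation}, and then show directly that a $K_N$-invariant coherent ideal is monomial; a monomial ideal is $T_N$-invariant because $\tau_t^*\chi^m=\chi^m(t)\chi^m$ with $\chi^m(t)\neq 0$.

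\emph{Reduction to ideals.} Let $\mathfrak{d}=(\cO_{X_\sigma}:\cF)$. As shown in the proof of Corollary~\ref{cor:global_meromorphic_monomial_generation}, it is a coherent ideal sheaf with nonzero stalks, and it is $K_N$-invariant whenever $\cF$ is.

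\emph{The ideal case.} Let $\mfa$ be a coherent $K_N$-invariant ideal. We repeat the argument of Lemma~\ref{lem:invariant_analytic_ideal_is_monomial}, which only used $K_N$-invariance. For $f\in\Gamma(X_\sigma,\mfa)$, the Fourier coefficients $P_m(f)$ are defined as integrals over $K_N$ of $f\circ\tau_k$. Since $f\circ\tau_k\in\Gamma(X_\sigma,\mfa)$ for $k\in K_N$, the continuity of $\operatorname{pr}$ gives $P_m(f)\in\Gamma(X_\sigma,\mfa)$. Each nonzero $P_m(f)$ is a constant multiple of $\chi^m$.

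We then replace the base point. Rather than only $z\in\orb(\sigma)$, we run the same jet and Krull argument at an arbitrary point $w$ of $X_\sigma$, letting $J_w$ be generated by the germs at $w$ of all $P_m(f)$. The Fourier uniqueness step holds pointwise at $w$, since $F_\ell(k)=q_\ell(f\circ\tau_k)$ still makes sense at $w$. Hence $\mfa_w=J_w$ for every $w$.

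Let $\mathfrak{b}$ be the ideal sheaf generated by all monomials $\chi^m$ occurring as nonzero $P_m(f)$. It is coherent: it is contained in the Noetherian algebraic monomial ideal, generated by finitely many monomials, and hence coherent. The previous step gives $\mathfrak{b}_w=\mfa_w$ for all $w$, so $\mfa=\mathfrak{b}$ is monomial and therefore $T_N$-invariant.

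\emph{Back to $\cF$.} By the ideal case, $\mathfrak{d}$ is monomially generated, so we may choose $\chi^{m_0}\in\mathfrak{d}$ with $m_0\in\sigma^\vee\cap M$. Multiplication by $\chi^{m_0}$ commutes with $K_N$-translation up to the nonzero scalar $\chi^{m_0}(k)$, so $\chi^{m_0}\cF\subset\cO_{X_\sigma}$ is a coherent $K_N$-invariant ideal. By the ideal case it is monomial, and hence $\cF=\chi^{-m_0}(\chi^{m_0}\cF)$ is generated by meromorphic monomials and is $T_N$-invariant.

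\emph{Main obstacle.} The delicate point is the pointwise Fourier step at a general $w$. One must check that $k\mapsto q_\ell(f\circ\tau_k)$ is continuous into the finite-dimensional jet space at $w$, which follows from the Fr\'echet continuity of $k\mapsto f\circ\tau_k$. One must also check that Cartan's Theorem~A gives generation of $\mfa_w$ by global sections; this holds because $X_\sigma$ is Stein.
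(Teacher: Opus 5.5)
Your proof is correct. The reduction to ideals via $\mathfrak d=(\cO_{X_\sigma}:\cF)$ and $\chi^{m_0}\cF$ matches the paper, but your ideal case takes a genuinely different route.

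\textbf{The paper's route.} It argues by analytic continuation. Take local generators $f_j$ of $\mfa$ and a one-parameter subgroup $\gamma_n$. The Fr\'echet-valued map $\lambda\mapsto\operatorname{pr}_V(f_j\circ\tau_{\gamma_n(\lambda)})$ is holomorphic near $\lambda=1$ and vanishes on the arc $B\cap S^1$, so it vanishes identically. This gives local $\gamma_n$-invariance, which is then propagated along paths in $\bbC^*$; finally one uses that the one-parameter subgroups generate $T_N$.

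\textbf{Your route.} You observe, correctly, that the proof of Lemma~\ref{lem:invariant_analytic_ideal_is_monomial} uses only $K_N$-invariance, namely to get $P_m(f)\in\Gamma(X_\sigma,\mfa)$. It also never uses that the base point lies in $\orb(\sigma)$ or is fixed by $K_N$, because $q_\ell$ is applied to the global functions $f\circ\tau_k$. Hence the argument gives $\mfa_w=J_w=\mathfrak b_w$ at every $w$. So $\mfa$ is generated by global monomials, and $T_N$-invariance follows from $\tau_t^*\chi^m=\chi^m(t)\chi^m$.

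\textbf{Comparison.} Your argument is shorter. It avoids the holomorphic-dependence step and the path-continuation step, which the paper only sketches. It also yields the monomial generation of Corollary~\ref{cor:global_monomial_generation} at the same time, without going through Theorem~\ref{thm:local_to_global}. The price is that it is global: it relies on the Steinness of $X_\sigma$ (Cartan's Theorem~A), harmonic analysis on the compact abelian group $K_N$, and Krull's intersection theorem. The paper's ideal case is instead a local principle: invariance under the compact real form implies invariance under the complexified one-parameter groups. It uses neither characters nor Fourier decomposition.

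\textbf{Minor remark.} The coherence of $\mathfrak b$ is not actually needed, since the stalkwise equality already gives $\mfa=\mathfrak b$ and the generators are semi-invariant. If you do want coherence, your stated reason ("contained in the Noetherian algebraic monomial ideal") is garbled. The correct reason is that the ideal of $\bbC[\sigma^\vee\cap M]$ generated by these monomials is already generated by finitely many of them.
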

\begin{proof}
    We first prove the assertion for coherent analytic ideal sheaves of $\cO_{X_\sigma}$. Let $\mfa$ be a coherent $K_N$-invariant analytic ideal sheaf of $\cO_{X_\sigma}$ and let $z$ be a point in $X_\sigma$. By the coherence of $\mfa$, we can choose a small neighborhood $W$ of $z$ and finitely many holomorphic functions $f_1,\ldots,f_s$ in $\Gamma(W,\mfa)$ that generate every stalk of $\mfa$ in $W$. Choose a relatively compact open neighborhood $V \Subset W$ of $z$.
    Fix $n \in N$ and choose a sufficiently small connected neighborhood $B \subset \bbC^*$ of $1$ such that $\gamma_n (\lambda) \cdot V \subset W$ for every $\lambda \in B$. Here, $\gamma_n:\bbC^* \to T_N$ is the \emph{one-parameter subgroup} associated with $n \in N$ defined by $\gamma_n(\lambda)(m) = \lambda^{\inp{m}{n}}$.

    Let $\operatorname{pr}_V: \Gamma(V,\cO_{X_\sigma}) \to \Gamma(V,\cO_{X_\sigma}/\mfa)$ be the natural projection map. For each $j$, define $F_j:B \to \Gamma(V,\cO_{X_\sigma}/\mfa)$ by
    $$
        F_j(\lambda) = \operatorname{pr}_V\bigl(f_j\circ\tau_{\gamma_n(\lambda)}\bigr).
    $$
    The holomorphicity of the action $\tau$ and the continuity of the linear map $\operatorname{pr}_V$ imply that $F_j$ is holomorphic with respect to the natural Fr\'echet topology on the space of sections.
    Indeed, since $X_\sigma$ is reduced, the natural Fr\'echet topology on $\Gamma(V,\cO_{X_\sigma})$ is the topology of uniform convergence on compact subsets. The function $G_j(\lambda,x) := f_j(\gamma_n(\lambda)\cdot x)$ is jointly holomorphic on $B \times V$, and Cauchy estimates in $\lambda$ show that its difference quotients converge uniformly on every compact subset of $V$ to its $\lambda$-derivative. Thus, the map assigning $G_j(\lambda, \cdot) \in \Gamma(V,\cO_{X_\sigma})$ to $\lambda$ is holomorphic in this Fr\'echet topology, and composing with the continuous linear map $\operatorname{pr}_V$ proves the holomorphicity of $F_j$.
    
    Since $\gamma_n(\lambda)\in K_N$ for $\lambda\in S^1$, the $K_N$-invariance of $\mfa$ gives $F_j(\lambda)=0$ for $\lambda\in B\cap S^1$. Applying the identity theorem after composition with continuous linear functionals, we obtain $F_j\equiv0$ on $B$. As $\ker\operatorname{pr}_V=\Gamma(V,\mfa)$, it follows that
    $$
        \bigl(\tau_{\gamma_n(\lambda)}^*\mfa\bigr)|_V\subset\mfa|_V
    $$
    for every $\lambda\in B$.
    After shrinking the neighborhoods, applying the same inclusion to the inverse translation gives equality. These local equalities extend along paths in $\bbC^*$, since the compact orbit path of each point can be covered by finitely many such neighborhoods. Thus $\mfa$ is invariant under $\gamma_n(\bbC^*)$. As $n\in N$ was arbitrary and the one-parameter subgroups generate $T_N$, the ideal sheaf $\mfa$ is $T_N$-invariant.

    Now, consider the general case. Set $\mathfrak{d}:=(\cO_{X_\sigma}:\cF)$. The local kernel description in the proof of Corollary~\ref{cor:global_meromorphic_monomial_generation} shows that $\mathfrak{d}$ is a coherent ideal sheaf with nonzero stalks. Since $\cF$ is $K_N$-invariant, so is $\mathfrak{d}$. Applying the ideal case proved above, we conclude that $\mathfrak{d}$ is $T_N$-invariant. By Corollary~\ref{cor:global_monomial_generation}, we can choose a monomial generator $\chi^{v_0}$ of $\mathfrak{d}$. Then
    $$
        \mathfrak{b} := \chi^{v_0} \cF \subset \cO_{X_\sigma}
    $$
    is a coherent analytic ideal sheaf. Since $\mathfrak{b}$ is $K_N$-invariant, by applying the ideal case once more, we conclude that $\mathfrak{b}$ is $T_N$-invariant. It follows that $\cF = \chi^{-v_0}\mathfrak{b}$ is $T_N$-invariant.
\end{proof}

We close this section with the weights $\varphi-c\varphi_\mfa$ from the introduction. Here $\varphi$ is a toric psh function on $X_\sigma$, and $D$ is a $T_N$-invariant Weil $\bbQ$-divisor on $X_\sigma$ such that $K_{X_\sigma}+D$ is $\bbQ$-Cartier.

\begin{proposition} \label{prop:toric_negative_ideal_weight}
    Let $\mfa \subset \cO_{X_\sigma}$ be a nonzero coherent $T_N$-invariant ideal sheaf and let $c$ be a positive rational number.
    Choose monomial generators
    $$
        \mfa= \chi^{a_1}\cO_{X_\sigma}+ \cdots + \chi^{a_s}\cO_{X_\sigma},
    $$
    and set $B_j := \divisor (\chi^{a_j})$. Then
    $$
        \cI[(X_\sigma,D);\varphi-c\varphi_\mfa] = \bigcap_{j=1}^s \cI[(X_\sigma,D-cB_j);\varphi].
    $$
    In particular, this sheaf is a coherent $T_N$-invariant $\cO_{X_\sigma}$-submodule of $\cM_{X_\sigma}$ globally generated by finitely many meromorphic monomials. Furthermore, with $c$ and $\mfa$ fixed,
    $$
        \cI[(X_\sigma,D);\varphi-c\varphi_\mfa] = \bigcup_{\delta>0} \cI[(X_\sigma,D);(1+\delta)\varphi-c\varphi_\mfa].
    $$
\end{proposition}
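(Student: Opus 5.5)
The key observation is that $e^{2c\varphi_\mfa}$ is comparable to $\max_j |\chi^{a_j}|^{2c}$, and hence to $\sum_j |\chi^{a_j}|^{2c}$. Since $\mfa$ is generated by $\chi^{a_1},\ldots,\chi^{a_s}$, locally
$$
    \varphi_\mfa = \tfrac12\log\bigl(|\chi^{a_1}|^2+\cdots+|\chi^{a_s}|^2\bigr)+\mathrm{O}(1),
$$
so up to bounded positive factors
$$
    e^{-2(\varphi-c\varphi_\mfa)}\approx \sum_{j=1}^s |\chi^{a_j}|^{2c}e^{-2\varphi}.
$$
Consequently $f\in\cI[(X_\sigma,D);\varphi-c\varphi_\mfa]_x$ if and only if $\int |f|^2|\chi^{a_j}|^{2c}e^{-2\varphi}dV_\beta<\infty$ near $x$ for every $j$.

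I will then absorb $|\chi^{a_j}|^{2c}$ into the adapted volume form, in the spirit of Lemma~\ref{lem:twist_formula} but with a rational coefficient. Choose $q$ such that $qc\in\bbZ$, $qu_D\in M$ and $q(K_{X_\sigma}+D)$ is Cartier. Then $q(K_{X_\sigma}+D-cB_j)$ is Cartier, and if $\beta$ generates $\omega^{[q]}(qD)$, the form $\beta_j:=\chi^{qca_j}\beta$ generates $\omega^{[q]}(q(D-cB_j))$. Moreover $dV_{\beta_j}=|\chi^{a_j}|^{2c}dV_\beta$. This proves the intersection formula, and each $\varphi$ remains locally integrable on the regular locus.

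Next I treat the structural claims. Each $\cI[(X_\sigma,D-cB_j);\varphi]$ is coherent by Corollary~\ref{cor:multiplier_ideal_coherence}, since $\varphi$ is psh and $K+D-cB_j$ is $\bbQ$-Cartier. It is $K_N$-invariant because $\varphi$ is $K_N$-invariant, and because $K_N$ acts by automorphisms preserving the invariant divisor and changing $\beta$ only by a unimodular constant. The sheaves are contained in $\cM$, so a finite intersection of coherent submodules is coherent. I will verify this locally: multiply by a common denominator $h$ given by Proposition~\ref{prop:fractional_ideal_sheaf}, which lands in $\cO$, and intersections of coherent ideals are coherent. The intersection is again $K_N$-invariant, so Theorem~\ref{thm:compact_torus_invariant_imply_torus_invariant} gives $T_N$-invariance, and Corollary~\ref{cor:global_meromorphic_monomial_generation} gives global generation by finitely many meromorphic monomials.

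Finally I establish the strong openness. Apply Corollary~\ref{cor:strong_openness} to each pair $(X_\sigma,D-cB_j)$ to get
$$
    \cI[(X_\sigma,D-cB_j);\varphi]=\bigcup_\delta \cI[(X_\sigma,D-cB_j);(1+\delta)\varphi].
$$
The identity just proved, applied with the psh function $(1+\delta)\varphi$ in place of $\varphi$, identifies $\cI[(X_\sigma,D);(1+\delta)\varphi-c\varphi_\mfa]$ with $\bigcap_j\cI[(X_\sigma,D-cB_j);(1+\delta)\varphi]$. These sheaves increase as $\delta$ decreases. The main obstacle is interchanging the finite intersection with the union; this works stalkwise. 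For a germ $f$ in the left side, each $j$ gives some $\delta_j$, and $\delta=\min_j\delta_j$ works for all $j$, by monotonicity in $\delta$ after normalizing $\varphi\le0$ locally. The reverse inclusion is trivial.
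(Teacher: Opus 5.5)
Your proposal is correct and follows essentially the same route as the paper. It uses the comparison $e^{2c\varphi_\mfa}\approx\sum_j|\chi^{a_j}|^{2c}$, absorbs $|\chi^{a_j}|^{2c}$ into an adapted volume form for $(X_\sigma,D-cB_j)$, takes coherence, the passage from $K_N$- to $T_N$-invariance, and monomial generation from Sections~\ref{sec:multiplier_ideal_sheaves_on_normal_complex_analytic_spaces} and~\ref{sec:analysis_on_toric_varieties}, and applies strong openness to each pair with $\delta=\min_j\delta_j$; your exact identity $dV_{\chi^{qca_j}\beta}=|\chi^{a_j}|^{2c}\,dV_\beta$ and your common-denominator argument for the coherence of the finite intersection inside $\cM_{X_\sigma}$, which the paper only asserts, add useful detail.
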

\begin{proof}
    The monomial generators exist by
    Corollary~\ref{cor:global_monomial_generation}.
    Up to locally bounded factors,
    $$
        e^{2c\varphi_\mfa} = \Big(\sum_{j=1}^s|\chi^{a_j}|^2\Big)^c \approx \sum_{j=1}^s|\chi^{a_j}|^{2c}.
    $$
    Each $B_j$ is principal Cartier, so $K_{X_\sigma}+D-cB_j$ is $\bbQ$-Cartier. If $dV_D$ and $dV_{D-cB_j}$ denote adapted volume forms for the corresponding pairs, then
    $$
        dV_{D-cB_j} \approx |\chi^{a_j}|^{2c}dV_D.
    $$
    Consequently, for a meromorphic germ $f$, the local integrability of $|f|^2e^{-2\varphi+2c\varphi_\mfa}dV_D$ is equivalent to that of $|f|^2e^{-2\varphi}dV_{D-cB_j}$ for every $j$. This proves the intersection formula.

    Each sheaf on the right is a coherent $\cO_{X_\sigma}$-submodule of $\cM_{X_\sigma}$ by Corollary~\ref{cor:multiplier_ideal_coherence} and is $T_N$-invariant by Theorem~\ref{thm:compact_torus_invariant_imply_torus_invariant}.
    Therefore, their finite intersection is also a coherent $T_N$-invariant $\cO_{X_\sigma}$-submodule of $\cM_{X_\sigma}$.
    Its finite meromorphic monomial generation follows from Corollary~\ref{cor:global_meromorphic_monomial_generation}.

    The last assertion follows from Corollary~\ref{cor:strong_openness} applied to the finitely many $(X_\sigma,D-cB_j)$, taking the minimum of the resulting positive parameters.
\end{proof}

\section{Multiplier ideals on a toric complex space} \label{sec:multiplier_ideals_on_a_toric_complex_space}

Let $\sigma$ be a strongly convex rational polyhedral cone in $N_\bbR$ and $D$ a $T_N$-invariant Weil $\bbQ$-divisor on $X_\sigma$ such that $K_{X_\sigma} + D$ is $\bbQ$-Cartier with index $q$. By Theorem~\ref{thm:toric_cartier_criterion}, there exists $u_D \in M_\bbQ$ such that
$$
    \inp{u_D}{v_\rho} = 1-a_\rho, \quad qu_D \in M
$$
for the primitive generators $v_\rho$ of the rays $\rho$ in $\sigma(1)$.

\begin{theorem}\label{thm:toric_multiplier_ideal}
    Let $\varphi$ be a toric psh function on $X_\sigma$. Then $\cI[(X_\sigma,D);\varphi]$ is a coherent $T_N$-invariant $\cO_{X_\sigma}$-submodule of $\cM_{X_\sigma}$ globally generated by finitely many meromorphic monomials.
    For every $m\in M$, the meromorphic monomial $\chi^m$ is a global section of this sheaf if and only if $m+u_D\in\interior P(\varphi)$.
\end{theorem}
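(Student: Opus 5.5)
The structural part follows from earlier results. Since $\varphi$ is psh, hence quasi-psh, Corollary~\ref{cor:multiplier_ideal_coherence} shows that $\cI[(X_\sigma,D);\varphi]$ is a coherent $\cO_{X_\sigma}$-submodule of $\cM_{X_\sigma}$. It is $K_N$-invariant for two reasons. First, $\varphi$ is $K_N$-invariant. Second, translation by $k\in K_N$ preserves the adapted volume form up to a constant of modulus one: a local generator of $\omega^{[q]}(qD)$ on the $T_N$-invariant $X_\sigma$ can be taken to be $\chi^{-qu_D}\bigl(\frac{d\chi^{m_1}}{\chi^{m_1}}\wedge\cdots\wedge\frac{d\chi^{m_r}}{\chi^{m_r}}\bigr)^{\otimes q}$, which is a $T_N$-semi-invariant global generator. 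Theorem~\ref{thm:compact_torus_invariant_imply_torus_invariant} then gives $T_N$-invariance, and Corollary~\ref{cor:global_meromorphic_monomial_generation} gives global generation by finitely many meromorphic monomials. It remains to decide when $\chi^m$ is a global section.

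Fix this global generator $\beta$. On $T_N$ we have $dV_\beta=|\chi^{-u_D}|^2\,dV_{T}$, where $dV_T$ is the Haar-type form $\prod_j \frac{i\,dt_j\wedge d\bar t_j}{|t_j|^2}$ up to a constant. In the coordinates $t=e^{x+i\theta}$ this becomes $dx\,d\theta$. Because $X_\sigma\setminus T_N$ has measure zero, we get
$$\int_{U\cap X^\reg}|\chi^m|^2e^{-2\varphi}dV_\beta=\int_{\Log(U\cap T_N)}e^{2(\inp{m-u_D}{x}\cdots)}\,dx,$$
up to a $\theta$-integration constant. A sign check is needed here: $|\chi^m|^2|\chi^{-u_D}|^2=e^{2\inp{m-u_D}{x}}$, whereas the statement involves $m+u_D$. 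With $D=0$ on $\bbC^n$ we have $u_D=\mathbf 1$, and $dV$ is Lebesgue measure $=\prod|t_j|^2\,dx\,d\theta$, so the generator is actually $\chi^{u_D}(\cdots)^{\otimes q}$. Its divisor is $q\,\divisor(\chi^{u_D})-q\sum D_\rho=-qD$, which is the right sign. So the integrand is $e^{2h_{m+u_D}(x)}$ with $h_{m'}(x)=\inp{m'}{x}-g(x)$, where $g$ comes from Lemma~\ref{lem:toric_psh_and_convex_functions}.

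Next, localize. By $T_N$-invariance and the proof of Theorem~\ref{thm:local_to_global}, it suffices to test integrability near a point $z$ of $\orb(\sigma)$, and hence on the compact sets $D_R$. Using the splitting $N=N_\sigma\oplus N'$ and $X_\sigma=X_{\sigma,N_\sigma}\times T_{N'}$, a neighborhood of $z$ corresponds in $\Log$-coordinates to $(S\cap N_{\sigma,\bbR})\times K$. Here $K\subset N'_\bbR$ is compact, and $S$ is a translate of $-\sigma$ intersected with a neighborhood of the region at infinity, so we may take $(x_0-\sigma)\times K$. The question then becomes whether
$$\int_{K}\int_{-\sigma}e^{2h_{m+u_D}(x,y)}\,dx\,dy<\infty,$$
where $\sigma$ is full-dimensional in $N_{\sigma,\bbR}$.

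For sufficiency, assume $m+u_D\in\interior P(\varphi)$. Proposition~\ref{prop:equivalent_def_of_newton_convex_body_and_other_characterization} gives $\inp{m+u_D}{x}<(\rec g)(x)$ for nonzero $x\in-\sigma$. Remark~\ref{rem:uniform_coercivity} then gives a uniform bound $h\le\beta-\gamma\|x\|$ on $(-\sigma)\times K$, so the integral converges. One small step remains: the $M'$-component of $m+u_D$ only pairs with $y\in K$ and is harmless. Also, $\rec g$ restricted to $N_{\sigma,\bbR}$ equals the recession function of $g(\cdot,y)$ for every $y$, because the recession function is independent of the base point.

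For necessity, which I expect to be the main obstacle, assume the integral is finite near every point of $\orb(\sigma)$. Fubini gives some $y\in K$ with $\int_{-\sigma}e^{2h(x,y)}dx<\infty$. The converse half of Theorem~\ref{thm:negative_recession_iff_integrability}, applied to $g(\cdot,y)$ on $N_{\sigma,\bbR}$, then yields the strict recession inequality. Proposition~\ref{prop:equivalent_def_of_newton_convex_body_and_other_characterization} converts this to $m+u_D\in\interior P(\varphi)$, using that $-\sigma\subset N_{\sigma,\bbR}$ and that the definition of $P(\varphi)$ involves only $x\in-\sigma$. There is a subtlety: the interior of $P(\varphi)$ in $M_\bbR$ versus in $M_{\sigma,\bbR}$. $P(\varphi)$ is invariant under adding $\sigma^\perp=M'_\bbR$ directions only up to the linear term on $-\sigma$, and those directions vanish there, so $P(\varphi)=P_\sigma\times M'_\bbR$ and the interiors match.
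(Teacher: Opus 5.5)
Your proposal is correct and follows essentially the same route as the paper. The structure comes from $K_N$-invariance together with Theorem~\ref{thm:compact_torus_invariant_imply_torus_invariant} and Corollary~\ref{cor:global_meromorphic_monomial_generation}; you then reduce to a $K_N$-invariant neighborhood of a point of $\orb(\sigma)$ whose $\Log$-image is a translate of $-\sigma$ times a bounded set, and conclude with Remark~\ref{rem:uniform_coercivity} for sufficiency and Fubini plus Theorem~\ref{thm:negative_recession_iff_integrability} on a slice for necessity. The only slip is the initial sign of the generator, which should be $\chi^{qu_D}$ rather than $\chi^{-qu_D}$; you correct this yourself, and it does not affect the invariance argument.
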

\begin{proof}
    If $\sigma=\{0\}$, then $X_\sigma=T_N$ and $D=0$. The toric psh function $\varphi$ is locally bounded on $T_N$, so $\cI[(T_N,0);\varphi]=\cO_{T_N}$. Since $P(\varphi)=M_\bbR$, the conclusion follows. Henceforth assume that $\sigma \neq \{0\}$.

    Since $D$ and $\varphi$ are $K_N$-invariant, so is $\cI[(X_\sigma,D);\varphi]$. By Theorem~\ref{thm:compact_torus_invariant_imply_torus_invariant}, the sheaf $\cI[(X_\sigma,D);\varphi]$ is $T_N$-invariant. Thus, the first part is immediate from results in Section~\ref{sec:analysis_on_toric_varieties}.
    Just as in Corollary~\ref{cor:global_meromorphic_monomial_generation}, we can find a meromorphic monomial $\chi^{v_0}$ such that $\chi^{v_0}\cI[(X_\sigma,D);\varphi] \subset \cO_{X_\sigma}$. By Theorem~\ref{thm:local_to_global}, it suffices to show that, for a fixed $z \in \orb(\sigma)$, a meromorphic monomial $\chi^m$ belongs to $\cI[(X_\sigma,D);\varphi]_z$ if and only if $m + u_D \in \interior P(\varphi)$. Following the decompositions in (\ref{eqn:decomposition_of_N}) and (\ref{eqn:decompositions_of_N_and_M}), we replace $m \in M$ by $m+m'$ with $m \in M_\sigma$ and $m' \in M'$ for notational convenience. 
    
    Now, fix a point $z=(z_0,z')$ in $\orb(\sigma) = \{z_0\} \times T_{N'}$. A monomial $\chi^{m+m'}$ belongs to $\cI[(X_\sigma,D);\varphi]_z$ if and only if there exists a relatively compact $K_N$-invariant open neighborhood $U$ of $(z_0,z')$ such that
    \begin{equation} \label{ineq:L2_integral_of_monomial_is_finite}
        \int_{U}{|\chi^{m+m'}|^2e^{-2\varphi} dV_\beta} < \infty.
    \end{equation}
    Here $\beta$ can be taken to be a global generator of $\omega_{X_\sigma}^{[q]}(qD)$, since every $T_N$-invariant Cartier divisor on the affine toric variety $X_\sigma$ is principal.

    Let $m_1,\ldots, m_s$ be nonzero generators of $\sigma^\vee \cap M_\sigma$ and let $m'_1,\ldots, m'_l$ be generators of $M'$. Shrinking $U$ sufficiently, we may assume that $U$ is an open set given by
    $$
        U = \left\{ (w,w') \in X_{\sigma,N_\sigma} \times T_{N'} \,\middle|\, \substack{\displaystyle |\chi^{m_j}(w)|<\epsilon,\,j=1,\ldots,s \\[0.3em] \displaystyle c^{-1}|\chi^{m'_k}(z')| < |\chi^{m'_k}(w')| < c|\chi^{m'_k}(z')|, \, k=1,\ldots, l}\right\},
    $$
    where $\epsilon > 0$ is sufficiently small and $c > 1$ satisfies $1-\epsilon < c^{-1} < c < 1 + \epsilon$. We may restrict the domain of integration to $U \cap T_N$ since $U \backslash T_N$ is a closed subset of Lebesgue measure zero. Through the log map $\Log:T_N \to N_{\bbR}$, the set $U \cap T_N$ corresponds to the set of points $(x,x') \in N_{\sigma,\bbR} \oplus N_{\bbR}'$ satisfying
    \begin{gather*}
        \inp{m_j}{x} < \log{\epsilon} \quad \text{for all $j=1,\ldots,s$,} \\
        -\log{c} < \inp{m'_k}{x'} - \log{|\chi^{m'_k}(z')|} < \log{c} \quad \text{for all $k=1,\ldots,l$.}
    \end{gather*}
    Set
    $$
    \Omega' := \{ x' \in N'_\bbR \,:\, -\log{c} < \inp{m'_k}{x'} - \log{|\chi^{m'_k}(z')|} < \log{c}, \, k=1,\ldots,l\}.
    $$
    Choosing a vector $\mathbf{b}$ in the interior of $\sigma \subset N_{\sigma,\bbR}$ sufficiently far along an interior ray, and shrinking $U$ if necessary, we may replace the former set, considered as a subset of $N_{\sigma,\bbR}$, by $-\sigma - \mathbf{b}$. Therefore
    $$
        \Log (U \cap T_N) = (-\sigma - \mathbf{b}) \times \Omega'.
    $$
    Using the torus coordinates $t_1,\ldots,t_r$ given in Section~\ref{sec:analysis_on_toric_varieties}, we choose $\beta$ so that its adapted volume form $dV_\beta$ on $T_N$ can be written as
    $$
        dV_\beta = |t^{qu_D}|^{2/q} \left| \frac{dt_1 \wedge \cdots \wedge dt_r}{t_1 \cdots t_r} \right|^2.
    $$
    By a change of variables, we obtain
    \begin{equation} \label{eq:monomial_integral_to_polar_integral}
    \begin{aligned}
        \int_{U}{|\chi^{m+m'}|^2e^{-2\varphi} dV_\beta} 
        & =  \int_{U \cap T_N} |t^{q(m+m' + u_D)}|^{2/q} e^{-2\varphi} \left| \frac{dt_1 \wedge \cdots \wedge dt_r}{t_1 \cdots t_r} \right|^2 \\
        & = (4\pi)^r \int_{(-\sigma-\mathbf{b}) \times \Omega'} e^{2(\inp{m+m'+u_D}{(x,x')} - g(x,x'))} dxdx'.
    \end{aligned}
    \end{equation}
    The last integral is finite if and only if
    $$
        \inp{m+m'+u_D}{(x,0)} < (\rec g)(x,0) \quad \text{for every nonzero $x \in -\sigma$.}
    $$
    Indeed, necessity follows from Fubini's theorem, by applying Theorem~\ref{thm:negative_recession_iff_integrability} to a slice for some $x'$. Here, the recession function of $g(x-\mathbf{b},x')$ for a fixed $x'$ is equal to that of $g(x,0)$. Sufficiency follows from Remark~\ref{rem:uniform_coercivity}, since the linear exponential factor in $x'$ is bounded on a relatively compact open set $\Omega'$.
    Finally, by Proposition~\ref{prop:equivalent_def_of_newton_convex_body_and_other_characterization}, this condition is equivalent to $m+m'+u_D \in \interior P(\varphi)$, as desired.
\end{proof}

\begin{example} \label{ex:quadric_cone_with_boundary}
    Let $X=\{(z_1,z_2,z_3) \in \bbC^3 \,:\, z_1z_2=z_3^2\}$.  Let $E=\{(z_1,z_2,z_3) \in \bbC^3 \,:\, z_1=z_3=0 \}\subset X$, and set $D := \frac{1}{2} E$. Consider the toric psh function
    $$
        \varphi=\max\bigl\{ \sqrt{2}\log|z_1|,\,2\log|z_2| \bigr\}.
    $$
    This function does not have analytic singularities at the origin: its restrictions to the two coordinate axes have logarithmic coefficients $\sqrt{2}$ and $2$, whose ratio is irrational.

    The quotient presentation given by $z_1=s^2$, $z_2=t^2$, and $z_3=st$ identifies the character lattice with
    $$
        M = \{(a,b) \in \bbZ^2 \,:\, a+b \equiv 0 \pmod{2} \},
    $$
    where $z_1=\chi^{(2,0)}$, $z_2=\chi^{(0,2)}$, and $z_3=\chi^{(1,1)}$. The dual lattice is
    $$
        N = \bbZ(1,0) \oplus \bbZ\left(\frac{1}{2},\frac{1}{2}\right) \subset \bbR^2.
    $$
    With this lattice, the cone is $\sigma=\bbR_{\ge0}^2 \subset N_\bbR$, and $K_X+D$ is $\bbQ$-Cartier with $u_D=(\frac{1}{2},1)$. The associated convex function is
    $$
        g(x_1,x_2) = \max \bigl\{ 2\sqrt{2}\,x_1,\,4x_2 \bigr\},
    $$
    and thus
    $$
        P(\varphi) = \conv \bigl\{( 2\sqrt{2},0),(0,4) \bigr\}+\bbR_{\ge0}^2.
    $$
    By Theorem~\ref{thm:toric_multiplier_ideal}, a character $\chi^{(a,b)}$, with $(a,b)\in M$, is a global section of $\cI[(X,D);\varphi]$ if and only if
    $$
        a,b \ge 0, \quad \frac{a+\frac{1}{2}}{2\sqrt{2}}+\frac{b+1}{4} > 1.
    $$
    The minimal exponent vectors satisfying these conditions, with respect to addition by $\sigma^\vee \cap M$, are $(2,0)$, $(1,1)$, and $(0,4)$. Hence
    $$
        \cI[(X,D);\varphi] = (z_1,z_3,z_2^2) \, \cO_X.
    $$
    For comparison, $u_0 = (1,1)$, and the same calculation gives
    $$
        \cI[(X,0);\varphi] = (z_1,z_2,z_3) \, \cO_X.
    $$
    Thus the boundary changes the generator $z_2$ to $z_2^2$.
\end{example}

Combining Theorem~\ref{thm:toric_multiplier_ideal} with
Proposition~\ref{prop:toric_negative_ideal_weight}, we obtain
the following extension.

\begin{corollary}\label{cor:toric_negative_ideal_weight_criterion}
    Let $\varphi$ be a toric psh function on $X_\sigma$.
    Let $\mfa \subset \cO_{X_\sigma}$ be a nonzero coherent $T_N$-invariant ideal sheaf, and let $c$ be a positive rational number.
    Write
    $$
        \mfa = \chi^{a_1}\cO_{X_\sigma}+\cdots+\chi^{a_s}\cO_{X_\sigma}.
    $$
    Then $\cI[(X_\sigma,D);\varphi-c\varphi_\mfa]$ is a coherent $T_N$-invariant $\cO_{X_\sigma}$-submodule of $\cM_{X_\sigma}$ globally generated by finitely many meromorphic monomials.
    For every $m\in M$, the meromorphic monomial $\chi^m$ is a global section of this sheaf if and only if
    $m+u_D+ca_j\in\interior P(\varphi)$ for every $j=1,\ldots,s$.
\end{corollary}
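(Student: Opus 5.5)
The plan is to combine Proposition~\ref{prop:toric_negative_ideal_weight} with Theorem~\ref{thm:toric_multiplier_ideal}, applied to the pairs $(X_\sigma,D-cB_j)$. The structural part of the statement (coherence, $T_N$-invariance, finite generation by meromorphic monomials) is already contained in Proposition~\ref{prop:toric_negative_ideal_weight}. So only the membership criterion for $\chi^m$ needs an argument. By the intersection formula there, $\chi^m$ is a global section of $\cI[(X_\sigma,D);\varphi-c\varphi_\mfa]$ if and only if it is a global section of $\cI[(X_\sigma,D-cB_j);\varphi]$ for every $j=1,\ldots,s$.

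Next I check that each pair $(X_\sigma,D-cB_j)$ satisfies the hypotheses of Theorem~\ref{thm:toric_multiplier_ideal}. The divisor $B_j=\divisor(\chi^{a_j})=\sum_\rho\inp{a_j}{v_\rho}D_\rho$ is a $T_N$-invariant principal divisor. Hence $D-cB_j$ is a $T_N$-invariant Weil $\bbQ$-divisor with coefficients $a_\rho-c\inp{a_j}{v_\rho}$. Also $K_{X_\sigma}+D-cB_j$ is $\bbQ$-Cartier, because $c$ is rational and $B_j$ is Cartier; this is where rationality of $c$ is used, since the index $q$ must be an integer.

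The key computation is the corresponding vector $u_{D-cB_j}$. It must satisfy
$$
\inp{u_{D-cB_j}}{v_\rho}=1-a_\rho+c\inp{a_j}{v_\rho}=\inp{u_D+ca_j}{v_\rho}
$$
for every $\rho\in\sigma(1)$, so $u_D+ca_j$ is an admissible choice.

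One subtlety is that $u_D$ is determined only up to $(\sigma^\perp)\cap M_\bbQ$ when $\sigma$ is not full-dimensional. I will argue that the criterion of Theorem~\ref{thm:toric_multiplier_ideal} is insensitive to this choice. By Proposition~\ref{prop:equivalent_def_of_newton_convex_body_and_other_characterization}, $\interior P(\varphi)$ is described by strict inequalities against $\rec g$ on $-\sigma$ only. Adding an element of $\sigma^\perp$ does not change $\inp{\cdot}{x}$ for $x\in-\sigma$, so $\interior P(\varphi)+\sigma^\perp=\interior P(\varphi)$. This lets me use $u_D+ca_j$ as the vector attached to $D-cB_j$.

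Applying Theorem~\ref{thm:toric_multiplier_ideal} to each pair then gives that $\chi^m$ is a global section of $\cI[(X_\sigma,D-cB_j);\varphi]$ if and only if $m+u_D+ca_j\in\interior P(\varphi)$. Intersecting over $j$ yields the stated criterion. I expect no real obstacle here; the only point needing care is the independence from the choice of $u$ just described.
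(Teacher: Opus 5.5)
Your proposal is correct and follows the paper's proof essentially verbatim: use the intersection formula of Proposition~\ref{prop:toric_negative_ideal_weight}, take $u_{D-cB_j}=u_D+ca_j$, and apply Theorem~\ref{thm:toric_multiplier_ideal} to each pair $(X_\sigma,D-cB_j)$. Your added observation that $\interior P(\varphi)+\sigma^\perp=\interior P(\varphi)$, so the criterion does not depend on which $u$ is chosen, is correct; the paper leaves this point implicit.
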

\begin{proof}
    Set $B_j := \divisor(\chi^{a_j})$. By Proposition~\ref{prop:toric_negative_ideal_weight}, the sheaf is globally generated by finitely many meromorphic monomials in $\bbC[M]$ and
    $$
        \cI[(X_\sigma,D);\varphi-c\varphi_\mfa] = \bigcap_{j=1}^s \cI[(X_\sigma,D-cB_j);\varphi].
    $$
    Since $B_j$ has coefficient $\inp{a_j}{v_\rho}$ along $D_\rho$, we may take
    $$
        u_{D-cB_j} = u_D + ca_j.
    $$
    Applying Theorem~\ref{thm:toric_multiplier_ideal} to each pair $(X_\sigma,D-cB_j)$ gives the assertion.
\end{proof}

\begin{remark}
    A function $\varphi$ on $X_\sigma$ is called {\bf toric quasi-psh} if $\varphi = \psi+b$, where $\psi$ is a toric psh function on $X_\sigma$ and $b$ is a locally bounded measurable function.
    For such a function, define $P(\varphi):=P(\psi)$. This definition is independent of the choice of $\psi$:
    since $\Log^{-1}(-\sigma)$ is relatively compact in $X_\sigma$, the convex functions associated to any two such representatives differ by a bounded function on $-\sigma$.

    The conclusions of Theorem~\ref{thm:toric_multiplier_ideal} and Corollary~\ref{cor:toric_negative_ideal_weight_criterion} also hold for toric quasi-psh functions. Indeed, locally bounded perturbations do not change local integrability, so
    $$
        \cI[(X_\sigma,D);\varphi] = \cI[(X_\sigma,D);\psi].
    $$
    The same equality holds after subtracting $c\varphi_\mfa$ from both weights.
\end{remark}

As an application, we recover the following well-known facts, which appear in many contexts (cf. \cite[Proposition~3.7]{kollar1997singularities}).

\begin{corollary}
    Let $\Delta$ be a fan in $N_\bbR$ and let $X_\Delta$ be the toric complex analytic space associated to the fan $\Delta$. Denote by $D$ the reduced Weil divisor whose support is $X_\Delta \backslash T_N$. Then
    \begin{enumerate}
        \item $(X_\Delta,D)$ is lc, that is, $\cJ'(X_\Delta,D)=\cO_{X_\Delta}$.
        \item If $K_{X_\Delta}$ is $\bbQ$-Cartier, then $(X_\Delta,0)$ is klt, that is, $\cI[(X_\Delta,0);0] = \cO_{X_\Delta}$.
    \end{enumerate}
\end{corollary}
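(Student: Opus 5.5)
Both statements are local on $X_\Delta$, and $X_\Delta$ is covered by the affine charts $X_\sigma$, $\sigma\in\Delta$. I will therefore fix a cone $\sigma$ and work on $X_\sigma$, applying Theorem~\ref{thm:toric_multiplier_ideal} with the toric psh function $\varphi\equiv 0$ in each part. The convex function attached to $\varphi\equiv0$ is $g\equiv 0$, so $\rec g\equiv 0$. By Proposition~\ref{prop:equivalent_def_of_newton_convex_body_and_other_characterization},
$$
    \interior P(0)=\{m\in M_\bbR : \inp{m}{x}<0 \text{ for all nonzero } x\in-\sigma\}=\interior(\sigma^\vee)
$$
when $\sigma$ is full-dimensional. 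In general it is the set of $m$ that are strictly positive on $\sigma\setminus\{0\}$. This is the relative-interior-type condition $\inp{m}{v_\rho}>0$ for all rays, extended to all nonzero points of $\sigma$. Since $\sigma$ is generated by its rays, this is equivalent to $\inp{m}{v_\rho}>0$ for every $\rho\in\sigma(1)$.

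For (2), take $D=0$. Then $u_0$ satisfies $\inp{u_0}{v_\rho}=1$ for every ray, and $\chi^m\in\cI[(X_\sigma,0);0]$ if and only if $\inp{m}{v_\rho}>-1$ for every $\rho$. For $m\in M$ this is $\inp{m}{v_\rho}\ge0$, i.e.\ $m\in\sigma^\vee\cap M$. Because the sheaf is generated by meromorphic monomials, it equals $\cO_{X_\sigma}$. The sheaves on the charts agree on overlaps, since they are defined intrinsically, so $\cI[(X_\Delta,0);0]=\cO_{X_\Delta}$.

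For (1), $D=\sum_\rho D_\rho$, so $K_{X_\sigma}+D=0$ is Cartier and $u_D=0$. Then $\mathfrak b:=\cI[(X_\sigma,D);0]$ consists of the monomials with $\inp{m}{v_\rho}>0$ for all $\rho\in\sigma(1)$; it is a nonzero $T_N$-invariant ideal. By Proposition~\ref{prop:analytic_characterization_of_maximal_non-lc_ideal} and Corollary~\ref{cor:toric_negative_ideal_weight_criterion}, with $\mathfrak b=\sum_j\chi^{b_j}\cO$ and small rational $\epsilon>0$, $\chi^m\in\cJ'(X_\sigma,D)$ if and only if
$$
    \inp{m+\epsilon b_j}{v_\rho}>0 \quad\text{for all } j \text{ and all } \rho\in\sigma(1).
$$
For $m=0$ this holds, because every $b_j$ satisfies $\inp{b_j}{v_\rho}>0$. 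Hence $1\in\cJ'(X_\sigma,D)$, and since $\cJ'$ is an ideal, $\cJ'(X_\sigma,D)=\cO_{X_\sigma}$. If $\sigma=\{0\}$, both statements are trivial. Gluing over the charts gives the claim.

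The main delicate point is (1). Proposition~\ref{prop:analytic_characterization_of_maximal_non-lc_ideal} is stated for $\epsilon$ small but not necessarily rational, so I must choose $\epsilon$ rational and within the admissible range. I also need the weight $-\epsilon\varphi_{\mathfrak b}$ to match the form $\varphi-c\varphi_{\mfa}$ in Corollary~\ref{cor:toric_negative_ideal_weight_criterion}, with $\varphi=0$ and $c=\epsilon$. Both are fine, since any sufficiently small rational $\epsilon$ works.
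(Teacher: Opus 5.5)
Your proposal is correct and follows essentially the paper's argument. On each chart, (2) applies Theorem~\ref{thm:toric_multiplier_ideal} with $\varphi\equiv0$ and $P(0)=\sigma^\vee$. For (1) you combine Corollary~\ref{cor:toric_negative_ideal_weight_criterion} (with $\varphi=0$ and $c=\epsilon$ rational) with Proposition~\ref{prop:analytic_characterization_of_maximal_non-lc_ideal}.

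The differences are minor. In (2) you get $\cI[(X_\sigma,0);0]\subset\cO_{X_\sigma}$ from the full monomial description $m\in\sigma^\vee\cap M$, whereas the paper uses Proposition~\ref{prop:effective_imply_ideal}. Your full-dimensionality caveat is unnecessary: for any strongly convex $\sigma$, $\interior(\sigma^\vee)$ is exactly the set of $m$ strictly positive on $\sigma\setminus\{0\}$.
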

\begin{proof}
    Since both assertions are local, it suffices to show the corollary on each affine chart $X_\sigma$. Write $D_\sigma := D|_{X_\sigma}$.

    (1) Since $K_{X_\sigma}+D_\sigma=0$, we may take $u_{D_\sigma}=0$.
    Let
    $$
        \mathfrak{b} := \cI[(X_\sigma,D_\sigma);0].
    $$
    Theorem~\ref{thm:toric_multiplier_ideal} and $P(0) = \sigma^\vee$ show that $\mathfrak{b}$ is generated by monomials $\chi^{a_1},\ldots,\chi^{a_s}$ with $a_j \in \interior \sigma^\vee$ for every $j$. Thus, $\epsilon a_j \in \interior \sigma^\vee$ for every $\epsilon>0$. Corollary~\ref{cor:toric_negative_ideal_weight_criterion}, applied with $\varphi=0$, shows that
    $$
        1 \in \cI[(X_\sigma,D_\sigma); -\epsilon\varphi_{\mathfrak b}]
    $$
    for every rational $\epsilon>0$. Choose a sufficiently small rational $\epsilon>0$. By Proposition~\ref{prop:analytic_characterization_of_maximal_non-lc_ideal}, we obtain
    $$
        \cJ'(X_\sigma,D_\sigma) = \cO_{X_\sigma}.
    $$
    Hence $(X_\sigma,D_\sigma)$ is lc.

    (2) Assume that $K_{X_\sigma}$ is $\bbQ$-Cartier. We may choose $u_0 \in M_\bbQ$ such that 
    $$
        \inp{u_0}{v_\rho}=1
    $$
    for every ray $\rho$ of $\sigma$. Therefore, $u_0 \in \interior \sigma^\vee$. As $P(0)=\sigma^\vee$,
    Theorem~\ref{thm:toric_multiplier_ideal} gives
    $$
        1 \in \cI[(X_\sigma,0);0].
    $$
    This sheaf is an ideal sheaf of $\cO_{X_\sigma}$ by Proposition~\ref{prop:effective_imply_ideal}, so
    $$
        \cI[(X_\sigma,0);0] = \cO_{X_\sigma}.
    $$
    Equivalently, $(X_\sigma,0)$ is klt.
\end{proof}

\bibliographystyle{amsplain}
\bibliography{reference}

\end{document}